\documentclass[11pt]{article}
\usepackage{amssymb}
\usepackage{amsmath}
\usepackage{amsthm}
\usepackage{bbm,bm}
\usepackage{multirow}
\usepackage{enumerate}
\usepackage{graphicx}
\usepackage{color}
\usepackage{hyperref}
\usepackage{stmaryrd}
\usepackage{relsize}
\usepackage{xcolor}
\usepackage{comment}
\usepackage{epstopdf}
\usepackage{mathtools}
\usepackage{booktabs}

\usepackage{caption}

\SetSymbolFont{stmry}{bold}{U}{stmry}{m}{n} 
\hypersetup{
    colorlinks,
    linkcolor={red!50!black},
    citecolor={blue!50!black},
    urlcolor={blue!80!black}
}
\numberwithin{equation}{section}

\newtheorem{theorem}{Theorem}[section]

\newtheorem{lemma}[theorem]{Lemma}

\newcommand\R{\mathrm{R}}
\newcommand{\tr}{\mathrm{tr}}
\renewcommand{\O}{\Omega}
\newcommand{\qon}{{\quad\hbox{on}\quad}}
\newcommand\qin{{\quad\hbox{in}\quad}}
\newcommand\qan{{\quad\hbox{and}\quad}}

\newcommand{\I}{\mathbb{I}}

\renewcommand{\div}{\mathrm{div}}
\newcommand{\bdiv}{\mathbf{div}}

\newcommand{\mathbcal}[1]{\boldsymbol{\mathcal{#1}}}

\newcommand{\rmd}{{\rm d}}
\newcommand{\bB}[1]{\boldsymbol{ #1 }}
\newcommand{\mbb}[1]{\mathbb{ #1 }}

\newcommand{\bBc}[1]{\bB{\mathcal{#1}}}
\newcommand{\mbf}[1]{\mathbf{#1}}
\newcommand{\divr}{{\rm div}}

\newcommand{\fbk}{f_{\rm bk}}
\newcommand{\bL}{\mathbf{L}}
\renewcommand{\L}{\mathrm{L}}

\newcommand{\bH}{\mathbf{H}}
\newcommand{\bQ}{\mathbf{Q}}
\newcommand{\mH}{\mathbb{H}}
\newcommand{\mX}{\mathbb{X}}
\newcommand{\bS}{\mathbf{S}}

\newcommand{\Hdivtr}{\mbb{H}_{0}(\bdiv_s;\O)}
\newcommand{\Ltwotr}{\mbb{L}_{\texttt{tr}}^2(\O)}
\usepackage{amsfonts}
\usepackage{yfonts}
\newcommand{\mfk}[1]{\mathfrak{#1}}
\makeatletter
\DeclareFontEncoding{LS1}{}{}
\DeclareFontSubstitution{LS1}{stix}{m}{n}
\DeclareMathAlphabet{\mathcc}{LS1}{stixcal}{m}{n}

\makeatother
\theoremstyle{definition}
\theoremstyle{remark}

\numberwithin{equation}{section}
\newtheorem{assumption}{Assumption}
\newcommand{\tmass}{\bB{r}}
\newcommand{\theat}{\bB{t}}
\newcommand{\vtmass}{\vec{\bB{r}}}
\newcommand{\vtheat}{\vec{\bB{t}}}
\newcommand{\vsmass}{\vec{\bB{s}}}
\newcommand{\vsheat}{\vec{\bB{q}}}
\newcommand{\vsigmass}{\bB{z}}         
\newcommand{\vsigheat}{\widetilde{\bB{\sigma}}}
\newcommand{\datacons}{\Psi^{\tt data}_{\tt mass}}
\newcommand{\dataheat}{\Psi^{\tt data}_{\tt heat}}
\newcommand{\dataflow}{\Psi^{\tt data}_{\tt flow}}

\usepackage{enumitem}
\usepackage{calc}
\usepackage{changepage}
\usepackage{authblk}

\makeatletter
\renewcommand{\AB@affilsepx}{\\[0.7em]}
\makeatother

\begin{document}

\title{\bf Mixed finite element methods for a heat-assisted model of sedimentation}
\author[a,$*$]{{\sc Julio Careaga}}
\affil[a]{Bernoulli Institute, University of Groningen, Nijenborgh 9
9747 AG Groningen, The Netherlands}

\author[b]{{\sc Cristian Inzunza}}

\affil[b]{GIANuC$^2$ and Departamento de Matemática y Física Aplicadas, Universidad\\ Católica de la Santísima Concepción, Casilla 297,
Concepción, Chile 
}
\affil[*]{\it Corresponding author, email: {\tt j.c.careaga.solis@rug.nl}}
\date{ }

\maketitle

\begin{abstract}

\noindent A model for the consolidation-sedimentation process incorporating thermal effects is analysed. The model is motivated by novel applications to heat-assisted sedimentation of mineral suspensions.
A system of convection-diffusion equations for the concentration and temperature are coupled to flow equations for the volume-average velocity and pressure. The model is supplemented with Dirichlet and mixed boundary conditions.
A quasi-stationary approach of the system is analysed by means of a mixed variational formulation, where additional variables such as the pseudostress tensor and variables for the gradient of the temperature and concentration are introduced. The existence and uniqueness of the system is shown making use of the Babu{\v s}ka-Brezzi theory and Banach's fixed-point theorem, under hypotheses of smallness of the data and additional regularity on the solutions. A discrete formulation based on finite element spaces is proposed and analysed, and the existence of solutions is achieved thanks to Brouwer's fixed-point theorem. Additionally, C\'ea and a priori estimates are derived for the mixed formulation. Finally, numerical examples are provided to demonstrate the performance of the model and to confirm the theoretical rates of convergence.
\end{abstract}

\smallskip\noindent
{\bf Key words}: Multidimensional sedimentation, heat equation, multiphysics, mixed boundary conditions, mixed finite elements, fixed-point iteration\\[1ex]
{\bf Mathematics Subject Classification}: 65N30, 65N12, 76R05, 76D07, 65N15
\section{Introduction}

\subsection{Scope}

We are interested in the numerical approximation and modelling of sedimentation processes of monodispersed solid particles immersed on a viscous fluid. This process is present in a large variety of industrial and biological applications. These applications range from mineral processing \cite{Bustos1999}, geothermal energy \cite{Hernandez2025}, wastewater treatment \cite{Li2017}, and food industry \cite{Yavuz2025} to the sedimentation in rivers, lakes and estuaries \cite{Burchard2018}. To simulate the process, mathematical models written in terms of partial differential equations (PDEs) are derived from the conservation of mass and momentum balances for the phases involved, solid and liquid, or for an averaged combination of them.
When considering a non-isolated system, heat sources induce changes in temperature which consequently affect the physical properties of the materials \cite{Julien2010}. In this regard, sedimentation is not an exception, and these changes in temperature directly affect the fluid flow and therefore the sedimentation process. Then, to properly model the process, an additional equation for the temperature needs to be taken into account.

In this work, we analyse a mathematical model that incorporates several prominent effects arising in the heat-assisted sedimentation processes. Firstly, the hindered moving effect on a solid particle caused by its surrounding particles \cite{Kynch1952}, and compression of the solid phase which together describe the sedimentation-consolidation phenomena of a suspension. Secondly, the strain rate of deformation of the mixture described by the viscous stress tensor with varying viscosity as a function of the concentration of solid particles and temperature of the mixture. Thirdly, the influence of the concentration of solid particles on the heat capacity of the suspension and heat transfer coefficient.
Following \cite{Burger2012b}, the pseudo-stationary conservation of mass can be formulated in terms of the volume fraction of solid particles $\phi=\phi(\bB{x},t)$, with $\bB{x}$ the vector of spatial coordinates and $t$ time, as the following non-linear convection-diffusion equation
\begin{align}\label{eq:intro:eq1}
\delta_1\phi - \div\big(-\phi\bB{u} - f_{\rm bk}(\phi)\bB{k} + \mathcal{D}_1(\phi)\nabla\phi\big) & = l_1\,,
\end{align}
where $\delta_1>0$ is constant, $\bB{u}$ is the so-called volume-averaged velocity,  $f_{\rm bk}\coloneqq f_{\rm bk}(\phi)$ is the Kynch batch flux density function, $\mathcal{D}_1\coloneqq \mathcal{D}_1(\phi)$ is the nonlinear $\phi$-dependent diffusion function which relates to the compression of solid particles, and $l_1$ is a given source term function. Here, $\bB{k}$ is a constant unitary vector pointing in the direction of gravity, which is $\bB{k}=(0,-1)^{\tt t}$ in two spatial dimensions. The nonlinear flux function $f_{\rm bk}$, which is related to a hindered-settling velocity, was already part of Kynch's model early in 1952 \cite{Kynch1952} and plays a central role in many one-dimensional models of sedimentation \cite{Careaga2021, SDAMM_SBR1,SDAMM_SBR2,Careaga2023SBR}. The diffusion function $\mathcal{D}_1$ is related to the effective solid stress function which drives the compression effect of the solid particles. Depending on the definition of $\mathcal{D}_1$, the character of Equation~\eqref{eq:intro:eq1} can drastically change leading to a parabolic-strongly degenerate equation. To overcome this additional difficulty, we consider the case $\mathcal{D}_1(\phi)>d_0$ for all $\phi$ with $d_0$ a positive constant.
The temperature of the suspension $\theta = \theta(\bB{x},t)$ is described by the following quasi-stationary convection-diffusion heat equation with variable diffusion:
\begin{align}\label{eq:intro:eq2}
 \delta_2\theta - \div\big(-\theta\bB{u} + \mathcal{D}_2(\phi)\nabla\theta\big) = l_2\,,
\end{align}
where $\delta_2>0$ is constant and $l_2$ is a source term function, and $\mathcal{D}_2\coloneqq \mathcal{D}_2(\phi)$ is the $\phi$-dependent non-linear diffusion function, which depends on the heat capacity of the suspension and heat transfer coefficients.
As in the concentration equation \eqref{eq:intro:eq1}, there is a linear convective term related to the bulk flow of the mixture and a $\phi$-dependent diffusion term. Although $\mathcal{D}_2$ depends nonlinearly on $\phi$, Equation~\eqref{eq:intro:eq2} is linear in $\theta$. The volume-averaged flow velocity of the mixture $\bB{u}$ is determined by the quasi-stationary flow equations:
\begin{align}
 (\delta_3\mathbb{I}+\delta_4\mbf{K}^{-1}(\phi))\bB{u} - \bdiv\big(\mu(\phi,\theta)\bB{e}(\bB{u}) - p\I\big) & = \bB{l}_3 + \mbf{f}(\phi,\theta)\,, \label{eq:intro:eq3}\\[1ex]
\div(\bB{u}) & = 0\,, \label{eq:intro:eq4}
\end{align}
where $\delta_3,\delta_4>0$ are positive constants, $\mbf{K}$ is the permeability tensor \cite{Alvarez2026}, $\mu\coloneqq \mu(\phi,\theta)$ is the viscosity function of the mixture, $\bB{l}_3$ is a prescribed vector-valued source term, and $\mbf{f}\coloneqq \mbf{f}(\phi,\theta)$ is a source term which involves the gravity force and a $\theta$-dependent part arising after making a Boussinesq approximation in \cite{Careaga2025}. Note that Equations \eqref{eq:intro:eq1}, \eqref{eq:intro:eq2} and \eqref{eq:intro:eq3}, can be seen as semi-discrete approximations through backward Euler method on the respective time derivatives $\partial_t\phi$, $\partial_t\theta$ and $\partial_t \bB{u}$ of their transient counterparts.

System \eqref{eq:intro:eq1}--\eqref{eq:intro:eq4} is an extension of the sedimentation-consolidation model in \cite{Alvarez2026} to the thermodynamic case, and can be related to a quasi-steady state version of the PDE system in \cite{Careaga2025}. Furthermore, the coupling between the equations is through coefficient functions and source terms, while the main nonlinearities are associated to the volume fraction $\phi$ and temperature $\theta$. The governing equations are supplemented with Neumann (concentration equation \eqref{eq:intro:eq1}), Dirichlet (flow equations \eqref{eq:intro:eq2}--\eqref{eq:intro:eq3}) and mixed (heat equation \eqref{eq:intro:eq3}) boundary conditions. Notably, the heat-assisted sedimentation represents an application in which all three types of boundary conditions arise naturally in the mathematical modelling of the process.

\noindent{\bf Main aims.}
The first aim of this work concerns to the rigorous analysis of a mixed weak formulation of the coupled nonlinear system composed by the concentration, heat and flow equations \eqref{eq:intro:eq1}--\eqref{eq:intro:eq4}, combining the mentioned boundary conditions.
The second aim is related to the development of a mixed finite element scheme for the approximation of the system, providing existence and a priori estimate results, and showing numerical investigation of the rates of convergence and simulations. To the best of our knowledge, this is the first work in which this multiphysics problem composed by the nonlinear convection-diffusion equation \eqref{eq:intro:eq1} coupled to the heat \eqref{eq:intro:eq2} and flow equations \eqref{eq:intro:eq3} and \eqref{eq:intro:eq4} through coefficient functions and source terms is analysed.

\subsection{Related works}

The study of sedimentation processes involves several challenges both from the mathematical analysis and numerical approximations perspectives, and also from a modelling point of view. One of the first models of sedimentation was developed in \cite{Kynch1952}, where a first-order nonlinear conservation law in one spatial dimension was proposed for the batch sedimentation process. The main assumption in this model is that the flux function $\fbk$ only depends on the local solids volume fraction $\phi$. Continuously operated one-dimensional models of sedimentation were studied in \cite{Chancelier1994,Diehl1995,Diehl1996}, where the equation for solids-volume fraction is given by a non-homogeneous conservation law with discontinuous flux and point source terms. Compression effects, which occurs when sediment particles get into contact with each other and form a network for high concentrations, was incorporated as a diffusion term, for instance, in \cite{Burger2005a,Chauchat2013,Toorman1999}.

Unlike one-dimensional models of sedimentation, in which the volume-average velocity is known, in the multi-dimensional case the flow velocity is no-longer a given scalar quantity but determined by a momentum balance, forming a coupled PDE system between a transport equation for the concentration and flow equations for the velocity and pressure. In \cite{Burger2012a}, a convection-diffusion equation for the concentration coupled to Stokes system for the volume-average velocity and excess-pore pressure are approximated by means of a finite volume element scheme. The scheme couples a discontinuous Galerkin scheme with a stabilized finite element method for the Stokes equations in an axisymmetric approach. A steady state model of sedimentation-consolidation is analysed in \cite{Alvarez2016b}, where the theory of mixed-finite element methods in Hilbert spaces is used to show well-posedness, and suitable inf-sup stable discrete spaces are proposed for the approximation of the system. In \cite{Burger2020}, a model of sedimentation--consolidation designed to simulate  continuously operated sedimentation tanks including the effect of a rotating device assisting the removal of sediment is studied. An $H(div)$-conforming method for the coupled problem is proposed in which the concentration of solid particles is approximated using a continuous Galerkin finite element space.
A solids velocity based model is studied in \cite{Careaga2023}, in which the structure of the flow equations includes a relation between the divergence of the velocity field and  second-order partial derivatives of the pressure. The weak formulation of the flow equations are analysed using coupled saddle point problems and Babu$\rm\check{s}$ka--Brezzi theory, and the model is approximated combining a finite volume scheme with mixed finite elements. A Banach spaces-based mixed finite element approach is employed in \cite{Alvarez2026} to analyse a steady-state model of  sedimentation-consolidation. The model and numerical approximation yields to a momentum-conserving fully–mixed approach, and the theoretical results are based on the Babu$\rm\check{s}$ka--Brezzi theory for perturbed saddle-point problems in Banach spaces along with suitable regularity assumptions and fixed-point arguments.

Models of sedimentation including temperature variations can be found in  \cite{Careaga2025,Reyes2019, Motta2024}. In \cite{Reyes2019}, the authors study the heat-assisted sedimentation process coupling a heat equation to mass and momentum balances for the solid and liquid phase. The model in \cite{Careaga2025} consists of a nonlinear convection equation for the concentration of solids, flow equations for the volume-average velocity, and a heat equation for the temperature, in which the coupling is through coefficient functions and source terms. A continuum model for the sedimentation process of weighting agents in non-Newtonian suspensions including temperature variations is proposed in \cite{Motta2024}.

For a complete introduction to the theory of mixed finite element methods, we refer to \cite{Gatica2014}, while Banach-spaces based mixed formulations of coupled flow and convection-diffusion equations can be found in \cite{Benavides2020,Careaga2025a,Colmenares2026}. In particular, in \cite{Benavides2020} a nonlinear stationary convection-diffusion equation coupled to flow equations with concentration dependent viscosity function that can be related to \eqref{eq:intro:eq1} coupled to \eqref{eq:intro:eq3} is analysed using a dual-mixed formulation for the fluid and primal formulation for the concentration equation. The analysis of a model including the coupling between heat and flow equations through a nonlinear heat diffusion is studied in \cite{Careaga2025a} by means of coupled saddle point formulations in Banach spaces. A fully mixed formulation for the Navier--Stokes equations with concentration dependent viscosity coupled with a conservation law for microorganisms concentration is studied in \cite{Colmenares2026}. In these works, a common fixed-point strategy is employed to establish the well-posedness of the coupled system.

\subsection{Outline of the paper}

The rest of the paper is structured as follows. In Section~\ref{sec:preliminaries} we provide standard notation for functional spaces and differential operators as well as a description of the governing equations, functions and parameters used in the paper. Section~\ref{sec:continuous:formulation} is dedicated to the derivation of the weak formulation of the concentration, heat and flow equations by means of decoupled saddle point problems, and establish the fully coupled weak formulation.
In Section~\ref{sec:CF:WP}, the solvability analysis of the continuous formulation is treated, where the well-posedness analysis of each decoupled saddle point problem is presented. The main result of the section is the existence and uniqueness of the fully coupled PDE system, which is carried out using a fixed-point argument under an additional regularity assumption. In Section~\ref{sec:DF:WP}, we introduce the discrete formulation of the decoupled saddle point problems, including their solvability analysis, and show an existence result for the fully coupled discrete problem. A priori estimates, including C\'ea estimates and rates of convergence are provided in Section~\ref{sec:a:priori}. Finally, numerical examples showing the performance of the proposed mixed finite element scheme and theoretical rates of convergence are included in Section~\ref{sec:numerical:examples}, while conclusions are provided in Section~\ref{sec:conclusions}.

\section{Model formulation}\label{sec:preliminaries}

\subsection{Further notation}

Throughout the course of the paper, the set $\Omega\subset\R^d$ with $d\in\{2,3\}$ will correspond to a polyhedral bounded domain, with boundary $\Gamma:=\partial\Omega$ and outward unit normal vector $\bB{\nu}$. Scalar functional spaces will be denoted by Roman letters, vector spaces by bold letters and tensorial spaces by blackboard bold letters. Given $k\in[1,+\infty)$, we adopt the standard notation ${\rm L}^k(\Omega)$ for the Lebesgue spaces, and ${\rm W}^{k,s}(\Omega)$, with $s\geq0$, for the Sobolev spaces of exponents $k$ and $s$. The norms of ${\rm L}^k(\Omega)$ and ${\rm W}^{s,k}(\Omega)$ are $\|\cdot\|_{0,k;\O}$ and $\|\cdot\|_{s,k;\O}$, respectively, and $|\cdot|_{s,k;\O}$ refers to the standard seminorm of ${\rm W}^{s,k}(\Omega)$. In addition, we define the following vector and tensor functional spaces:
\begin{align}
\mbf{H}(\divr_k;\Omega)
&:= \left\{ \bB{w}\in\mbf{L}^2(\Omega): \quad \divr(\bB{w})\in\L^k(\Omega) \right\}\,, \label{def:hdiv:vector} \\[1ex]
\mbb{H}(\mbf{div}_k;\Omega)
&:= \left\{ \bB{\tau}\in\mbb{L}^2(\Omega): \quad \mbf{div}(\bB{\tau})\in\mbf{L}^k(\Omega) \right\}\,,\label{def:hdiv:tensor}\\[1ex]
\mbb{H}_0(\bdiv_k;\O)
&:= \left\{ \bB{\tau}\in\mbb{H}(\bdiv_k;\O): \, \int_\O\tr(\bB{\tau})=0 \right\}\,.
\end{align}
Here, $\divr(\cdot)$ denotes the divergence operator in $\R^d$ acting on vector fields, whereas $\mbf{div}(\cdot)$ corresponds to its tensorial extension acting along the rows of a tensor and ${\rm tr}(\cdot)$ corresponds to the trace function. The norms of the above spaces are respectively defined by
\begin{alignat}{2}
\|\bB{w}\|_{\divr_k;\O}
&:= \|\bB{w}\|_{0;\O} + \|\divr(\bB{w})\|_{0,k;\O}
&&\qquad
\forall\, \bB{w}\in\mbf{H}(\divr_k;\Omega)\,, \label{def:norm:hdiv:vector} \\[1ex]
\|\bB{\tau}\|_{\mbf{div}_k;\O}
&:=
\|\bB{\tau}\|_{0;\O} + \|\mbf{div}(\bB{\tau})\|_{0,k;\O}
&&\qquad
\forall\, \bB{\tau}\in\mbb{H}(\mbf{div}_k;\Omega)\,. \label{def:norm:hdiv:tensor}
\end{alignat}
For the particular case $k=2$, we simply write $\mbf{H}(\divr;\Omega)$ and $\mbb{H}(\mbf{div};\Omega)$ for \eqref{def:hdiv:vector} and \eqref{def:hdiv:tensor}, respectively, with the associated norms $\|\cdot\|_{\divr;\O}$ and
$\|\cdot\|_{\mbf{div};\O}$. In case of no ambiguity regarding the functional space, we directly write the norms as $\|\cdot\|$ without
subscripts. Additionally, given a subset of the boundary $\Gamma_\star\subseteq\Gamma$, we define the following subspaces
\begin{align*}
\bH_{\Gamma_\star}(\div_k;\O) :=
\big\{ \bB{w}\in\bH(\div_k;\O): \, \bB{w}\cdot\bB{\nu}=0 \text{ on }\Gamma_\star \big\}\,,\qquad
\mathrm{H}^{1/2}(\Gamma_\star) := \left\{ \gamma_0(v)|_{\Gamma_\star}:\, v\in\mathrm{H}^1(\O) \right\},
\end{align*}
where $\gamma_0:\mathrm{H}^1(\O)\to\mathrm{H}^{1/2}(\Gamma)$ denotes the trace operator. The trace space $\mathrm{H}^{1/2}(\Gamma_\star)$ is endowed with the quotient norm
\begin{align*}
\|\chi\|_{1/2,\Gamma_\star}
:= \inf \big\{ \|v\|_{1;\O}: \quad v\in\mathrm{H}^1(\O), \quad \gamma_0(v)|_{\Gamma_\star}=\chi \big\}\,,
\end{align*}
and its dual space is denoted by $\mathrm{H}^{-1/2}(\Gamma_\star) := \bigl(\mathrm{H}^{1/2}(\Gamma_\star)\bigr)'$. The corresponding vector-valued trace spaces are defined componentwise. In addition, for every $\chi\in\mathrm{H}^{1/2}(\Gamma_\star)$, we define
\begin{align}
\left\langle
\bB{w}\cdot\bB{\nu},\chi \right\rangle_{\Gamma_\star}
:=
\left\langle \bB{w}\cdot\bB{\nu},\widetilde{\chi} \right\rangle_\Gamma\qquad  \forall \bB{w}\in \bH_{\Gamma\setminus\Gamma_\star}(\div_k;\O)\,, \label{def:restricted:duality}
\end{align}
where $\langle\cdot,\cdot\rangle_\Gamma$ denotes the duality pairing between $\mathrm{H}^{-1/2}(\Gamma)$ and $\mathrm{H}^{1/2}(\Gamma)$, or between their vector-valued counterparts, and $\widetilde{\chi}\in\mathrm{H}^{1/2}(\Gamma)$ is any extension of $\chi$ to $\Gamma$, as $\bB{w}\cdot\bB{\nu}=0$ on $\Gamma\setminus\Gamma_\star$. Moreover, there exists a constant
$C_{\Gamma_\star}>0$ such that
\begin{align*}
\left| \left\langle \bB{w}\cdot\bB{\nu},\chi \right\rangle_{\Gamma_\star} \right|
\leq
C_{\Gamma_\star} \|\bB{w}\|_{\div_k;\O} \|\chi\|_{1/2,\Gamma_\star}\,.
\end{align*}
The following useful integration by parts formulas for functions in the spaces $\mbf{H}(\divr_k;\Omega)$ and $\mbb{H}(\mbf{div}_k;\Omega)$ with $k\geq2d/(d+2)$ were introduced in \cite{Caucao2023}:
\begin{alignat}{2}
\left\langle
\bB{w}\cdot\bB{\nu},\varphi \right\rangle_\Gamma
& = \int_\O \left( \bB{w}\cdot\nabla\varphi + \varphi\,\bdiv(\bB{w}) \right)
&&\qquad \forall\, (\bB{w},\varphi) \in \bH(\div_k;\O)\times\mathrm{H}^1(\O)\,, \label{eq:IbyP:Hdivp:vector} \\
\left\langle \bB{\tau}\bB{\nu},\bB{v} \right\rangle_\Gamma
&= \int_\O \left( \bB{\tau}:\nabla\bB{v} + \bB{v}\cdot\bdiv(\bB{\tau}) \right)
&&\qquad \forall\, (\bB{\tau},\bB{v}) \in \mH(\bdiv_k;\O)\times\mbf{H}^1(\O)\,, \label{eq:IbyP:Hdivp:tensor}
\end{alignat}
where the colon ``$:$'' denotes the matrix inner product. Duality pairings over a relatively open subset $\Gamma_\star\subseteq\Gamma$ are understood according to \eqref{def:restricted:duality}.

Given a tensor $\bB{\tau}\in\R^{d\times d}$, we let $\bB{\tau}^{\rm t}$ be its transpose, and define
the deviatoric part of $\bB{\tau}$, denoted by the superscript ``$\rmd$'', by $\bB{\tau}^{\rmd} := \bB{\tau} -  d^{-1}\tr(\bB{\tau})\mbb{I}$, where $\mbb{I}$ is the identity matrix of order $d$. The spaces of
$\mathrm{L}^2$-symmetric, $\mathrm{L}^2$-skew-symmetric and zero-trace tensors are respectively denoted by $\mathbb{L}^2_{\texttt{sym}}(\Omega)$, $\mathbb{L}^2_{\texttt{skew}}(\Omega)$ and $\Ltwotr $. Furthermore, we denote by $\mathrm{L}^k_0(\Omega)$ the space of zero-mean functions in $\mathrm{L}^k(\Omega)$.

\subsection{Governing equations} \label{sec:governing:equations}

We start by introducing the unknowns and physical quantities of interest in our model. We define $\phi:=\phi(\bB{x})$ as the volume fraction of solid particles, where $\bB{x}$ denotes a coordinate point in $\Omega$. Then, for our monodispersed model, the volume fraction corresponding to the fluid phase at each $\bB{x}$ is given by $1-\phi(\bB{x})$. For the velocity field, we introduce the volume-average velocity as $\bB{u} := \bB{u}(\bB{x})$, which is a vector field in $\R^d$, and the unknown $p:=p(\bB{x})$ represents the excess pore pressure. The temperature, meanwhile, is denoted by $\theta = \theta(\bB{x})$. The governing model is stated as the following system of coupled PDEs:
\begin{subequations} \label{syst:main}
\begin{alignat}{2}
\delta_1\phi - \div\big(-\phi\bB{u} - f_{\rm bk}(\phi)\bB{k} + \mathcal{D}_1(\phi)\nabla\phi\big) & = l_1&& \text{ in }\Omega\,, \label{eq:main:mass}\\[1ex]
\delta_2\theta - \div\big(-\theta\bB{u} + \mathcal{D}_2(\phi)\nabla\theta\big) & = l_2 && \text{ in }\Omega\,,\label{eq:main:heat}\\[1ex]
(\delta_3\mathbb{I}+\delta_4\mbf{K}^{-1}(\bB{x},\phi))\bB{u} - \bdiv\big(\mu(\phi,\theta)\bB{e}(\bB{u}) - p\I\big) & =  \bB{l}_3+\mbf{f}(\phi,\theta)\quad && \text{ in }\Omega\,, \label{eq:main:flow1}\\[1ex]
\div(\bB{u}) & = 0 && \text{ in }\Omega\,, \label{eq:main:flow2}
\end{alignat}
\end{subequations}
where $f_{\rm bk}:= f_{\rm bk}(\phi)$ is the Kynch batch flux density function \cite{Kynch1952}, $\mathcal{D}_1: = \mathcal{D}_1(\phi)$ is a positive nonlinear diffusion term modelling compression effects and $\bB{k}$ is the unitary vector pointing in the direction of gravity. Function $\mathcal{D}_2:=\mathcal{D}_2(\phi)$ models the thermal diffusivity, which is assumed to nonlinearly depend on $\phi$. At the flow equation \eqref{eq:main:flow1}, function $\mu:= \mu(\phi,\theta)$ corresponds to the viscosity of the mixture, $\mathbf{K}:=\mathbf{K}(\bB{x},\phi)$ is the permeability tensor \cite{Alvarez2026}, whose inverse is symmetric and assumed to be uniformly positive definite and $\mathbf{K}^{-1}(\bB{x},\phi)\in \mbb{L}^{\infty}$ for all $\phi$ and $\bB{x}\in \Omega$, and $\mathbf{f}:=\mathbf{f}(\phi,\theta)$ is a source term depending on both $\phi$ and $\theta$ \cite{Careaga2025a,Reyes2019}. Functions $l_1:=l_1(\bB{x})$, $l_2 = l_2(\bB{x})$ and $\bB{l}_3 = \bB{l}_3(\bB{x})$ are space dependent source terms and $\delta_1$, $\delta_2$, $\delta_3$ and $\delta_4$ are non-negative real constants. System~\eqref{syst:main} is supplemented with the following boundary conditions:
\begin{subequations} \label{syst:bcs}
\begin{alignat}{2}
 \big(-\phi\bB{u} - f_{\rm bk}(\phi)\bB{k} + \mathcal{D}_1(\phi)\nabla\phi\big) \cdot \bB{\nu}  & = \Phi &&\qquad\qon\Gamma\,, \label{eq:bc:mass}\\[1ex]
 \big(-\theta\bB{u} + \mathcal{D}_2(\phi)\nabla\theta \big)\cdot \bB{\nu} & = 0    &&\qquad\qon\Gamma_{\rm N}\,,\label{eq:bc:heat:1}\\[1ex]
 \theta  & = \theta_{\rm D} &&\qquad\qon \Gamma_{\rm D}\,, \label{eq:bc:heat:2}\\[1ex]
  \bB{u} & = \bB{u}_{\rm D} &&\qquad\qon \Gamma\,, \label{eq:bc:u:1}
\end{alignat}
\end{subequations}
where $\Phi\in \mathrm H^{-1/2}(\Gamma)$ and $\bB{u}_{\rm D}\in \mbf{H}^{1/2}(\Gamma)$.
Now, noticing that $\delta_1 \phi - l_1$, $\delta_2\theta - l_2$ and $\delta_3 \bB{u} - \bB{l}_3$ may be related to a backward Euler time discretization of $\partial_t \phi$, $\partial_t \theta$ and $\partial_t \bB{u}$, respectively, the model equations~\eqref{syst:main} can be interpreted as a quasi-stationary system of PDEs. In what follows, we will indistinguishably refer to Equation~\eqref{eq:intro:eq1} as the concentration or mass equation as it also corresponds to the mass balance for the solid phase.

To achieve the solvability analyses in the upcoming sections, we introduce some assumptions for the model functions.
\begin{assumption}
The model functions in system~\eqref{syst:main} satisfy the following bounds:
\begin{itemize}
 \item[1.] There exist positive real constants  $f_{\max}$, $d_1$, $d_2$, $D_1$ and $D_2$ such that for all $\phi$, functions $f_{\rm bk}$, $\mathcal{D}_1$ and $\mathcal{D}_2$ are bounded by:
\begin{align}
0 \, \leq \, f_{\rm bk}(\phi) \, \leq f_{\rm max}\,,\qquad
d_{1} \, \leq \, \mathcal{D}_1(\phi) \, \leq  D_1\,,\qquad
d_{2} \, \leq \, \mathcal{D}_2(\phi) \, \leq \, D_2\,. \label{eq:data:bounds}
\end{align}
\item[2.] There exist positive real constants $\kappa_{\min}$ and $\kappa_{\max}$ such that for all $\phi$, tensor $\mbf{K}^{-1}$ satisfies
\begin{align}
\kappa_{\min}  |\bB{y}|^2 \leq \bB{y}^{\tt t}\mbf{K}^{-1}(\bB{x},\phi)\bB{y}\leq \kappa_{\max}|\bB{y}|^2 \qquad \forall\bB{x}\in \Omega,\,\forall \bB{y}\in {\rm R}^{d}\,.
\end{align}
\item[3.] There exist positive real (Lipschitz) constants $L_{\rm fbk}$, $L_{\mathcal{D}_1}$, $L_{\mathcal{D}_2}$, $L_{\mu}$ and $L_{\mbf{K}}$, such that functions $f_{\rm bk}$, $\mathcal{D}_1$, $\mathcal{D}_2$, $\mu$ and tensor function $\mbf{K}^{-1}$, satisfy the following Lipschitz continuity inequalities for all $\phi_1$, $\phi_2$, $\theta_1$ and $\theta_2$:
\begin{equation}\label{eq:LC:data}
\begin{aligned}
 |f_{\rm bk}(\phi_1)-f_{\rm bk}(\phi_2)| & \leq L_{\rm fbk}|\phi_1-\phi_2|\,,\\
 |\mathcal{D}_i(\phi_1)-\mathcal{D}_i(\phi_2)| &\, \leq \, L_{\mathcal{D}_i}|\phi_1-\phi_2|\,,\quad \text{for }i\in\{1,2\}\,,\\
 |\mu(\phi_1,\theta_1)-\mu(\phi_2,\theta_2)| & \leq L_{\mu}\left(|\phi_1-\phi_2|+|\theta_1-\theta_2|\right)\,,\\
 |\mbf{K}^{-1}(\bB{x},\phi_1)-\mbf{K}^{-1}(\bB{x},\phi_2)| & \, \leq \, L_{\mbf{K}}|\phi_1-\phi_2|\,.
\end{aligned}
\end{equation}
\end{itemize}
\end{assumption}

\section{Continuous saddle-point problem formulations} \label{sec:continuous:formulation}

\subsection{Concentration equation}
\label{sec:CF:concentration}

To derive a mixed formulation of Equation \eqref{eq:main:mass}, we introduce two additional unknowns: the flux $\vsigmass$, given by the expression inside the divergence operator in \eqref{eq:main:mass}, and the concentration gradient $\tmass$. The purpose of introducing these unknowns is to seek the concentration $\phi$ in a weaker space than $\mathrm{H}^1(\O)$. Thus, \eqref{eq:main:mass} and \eqref{eq:bc:mass} can be equivalently rewritten as the following system for $(\tmass,\vsigmass,\phi)$:
\begin{subequations} \label{syst:mass:decoupled}
\begin{align}
\vsigmass = -\phi\bB{u} - \fbk(\phi)\bB{k} +\mathcal{D}_1(\phi)\tmass\,,
\qquad
\tmass = \nabla\phi\,,\qquad \delta_1\phi-\div(\vsigmass) = l_1 \qin\O\,, & \label{eq:PDEmixed:mass} \\
\vsigmass\cdot\bB{\nu} = \Phi \qon\Gamma\,.& \label{cc:PDEmixed:mass}
\end{align}
\end{subequations}
We now identify appropriate functional spaces for the unknowns in system \eqref{syst:mass:decoupled}.
Testing the first equation in \eqref{eq:PDEmixed:mass} against
$\bB{s}\in\bL^2(\O)$ yields
\begin{align}
- \int_\O\vsigmass\cdot\bB{s} - \int_\O\phi\bB{u}\cdot\bB{s} + \int_\O\mathcal{D}_1(\phi)\tmass\cdot\bB{s}
= \int_\O\fbk(\phi)\bB{k}\cdot\bB{s} \qquad \forall\,\bB{s}\in\bL^2(\O)\,. \label{syst:weak:mass:aux}
\end{align}
For the convective term in the second integral on the left-hand side of \eqref{syst:weak:mass:aux} to be well
defined, we require $\phi\bB{u}\in\bL^2(\O)$. This condition is satisfied by seeking $\phi\in\L^\rho(\O)$ and $\bB{u}\in\bL^r(\O)$, with $\rho$ and $r$ such that $\tfrac{1}{\rho} + \tfrac{1}{r} = \tfrac{1}{2}$ for $r,\rho>2$. Indeed, thanks to H\"older inequality, we have
\begin{align*}
\|\phi\bB{u}\|_{0;\O} \leq \|\phi\|_{0,\rho;\O} \|\bB{u}\|_{0,r;\O}\,.
\end{align*}
Next, we test equation $\tmass=\nabla\phi$ against $\bB{\varsigma}\in\bH(\div_\varrho;\O)$, and applying the integration by parts formula \eqref{eq:IbyP:Hdivp:vector}, we formally obtain
\begin{align}
\int_\O\tmass\cdot\bB{\varsigma} = \int_\O\nabla\phi\cdot\bB{\varsigma} = -\int_\O\phi\,\div(\bB{\varsigma}) + \left\langle
\bB{\varsigma}\cdot\bB{\nu}, \gamma_0(\phi)\right\rangle_\Gamma\,, \label{eq:t1:IbyP}
\end{align}
where $\varrho$ corresponds to the conjugate exponent of $\rho$. This identity is well defined provided that $\div(\bB{\varsigma})\in\L^\varrho(\O)$ and the normal trace $\bB{\varsigma}\cdot\bB{\nu}$ belongs to $\mathrm{H}^{-1/2}(\Gamma)$. The latter follows from the continuous embedding $\mathrm{H}^1(\O)\hookrightarrow\L^\rho(\O)$, which holds for every finite $\rho\geq1$ when $d=2$, and for $1\leq\rho\leq6$ when $d=3$. Note that the continuous embedding $\bH^1(\O)\hookrightarrow\bL^r(\O)$ also holds for every finite $r\geq1$ when $d=2$, and for $1\leq r\leq6$ when $d=3$. Therefore, defining $s$ and $\varrho$ as the conjugate exponents of $r$ and $\rho$, respectively, the admissible ranges of the Sobolev exponents depending on the spatial dimension are
\begin{align}
\big\{ r,\rho\in(2,+\infty),\quad s,\varrho\in(1,2)\quad\text{if }d=2\big\}\quad\text{or}\quad
\big\{r,\rho\in[3,6],\quad s,\varrho\in[6/5,3/2]\quad\text{if }d=3\big\}\,.\label{def:ranges}
\end{align}
We now introduce the boundary unknown $\xi:=-\gamma_0(\phi)\in\mathrm{H}^{1/2}(\Gamma)$, where $\gamma_0$ denotes the trace operator on $\mathrm{H}^1(\O)$ \cite{Gatica2014}. Consequently, \eqref{eq:t1:IbyP} can be equivalently written as
\begin{align}
\int_\O\tmass\cdot\bB{\varsigma} + \int_\O\phi\,\div(\bB{\varsigma}) + \left\langle \bB{\varsigma}\cdot\bB{\nu},\xi \right\rangle_\Gamma
= 0 \qquad \forall\,\bB{\varsigma}\in\bH(\div_\varrho;\O)\,. \label{eq:weak:gradient:mass}
\end{align}
In turn, testing the third equation in \eqref{eq:PDEmixed:mass} against $\psi\in\L^\rho(\O)$, and the boundary condition \eqref{cc:PDEmixed:mass} against $\chi\in\mathrm{H}^{1/2}(\Gamma)$, we obtain
\begin{subequations} \label{syst:saddle:mass}
\begin{alignat}{2}
\int_\O\psi\,\div(\vsigmass) - \int_\O\delta_1\phi\psi & = -\int_\O l_1\psi
&&\qquad
\forall\,\psi\in\L^\rho(\O)\,, \\[1ex]
\left\langle \vsigmass\cdot\bB{\nu},\chi \right\rangle_\Gamma &= \left\langle\Phi,\chi\right\rangle_\Gamma
&&\qquad \forall\,\chi\in\mathrm{H}^{1/2}(\Gamma)\,.
\end{alignat}
\end{subequations}
To formulate equations \eqref{syst:weak:mass:aux}, \eqref{eq:weak:gradient:mass} and \eqref{syst:saddle:mass} as a saddle-point problem, we introduce the product spaces $\bH_1 := \bL^2(\O)\times\bH(\divr_\varrho;\O)$ and $\bQ_1 := \L^\rho(\O)\times\mathrm{H}^{1/2}(\Gamma)$,
and the following trial and test vectors:
\begin{alignat*}{4}
\vtmass &:= (\tmass,\vsigmass)\in\bH_1      &&\qan&
\vsmass &:= (\bB{s},\bB{\varsigma})\in\bH_1\,, \\
\vec{\bB{\phi}} &:= (\phi,\xi)\in\bQ_1      &&\qan&
\vec{\bB{\psi}} &:= (\psi,\chi)\in\bQ_1\,.
\end{alignat*}
Then, the weak formulation of \eqref{syst:mass:decoupled}, and consequently that of decoupled Equation \eqref{eq:main:mass} together with \eqref{eq:bc:mass}, is given by the following nonlinear perturbed saddle-point problem: Given $\bB{u}\in\bL^r(\O)$, find $(\vtmass,\vec{\bB{\phi}})\in\bH_1\times\bQ_1$ such that
\begin{equation}
\label{syst:weak:mass}
\begin{array}{cclclcll}
a_\phi\big(\vtmass,\vsmass\big) &+& b\big(\vsmass,\vec{\bB{\phi}}\big) &-&
\widetilde b_{\bB{u}} \big(\vsmass,\vec{\bB{\phi}}\big) &=& \mathcal{F}_\phi(\vsmass)
& \qquad \forall\,\vsmass\in\bH_1\,, \\[1ex]
b\big(\vtmass,\vec{\bB{\psi}}\big) &-& c\big(\vec{\bB{\phi}},\vec{\bB{\psi}}\big)
&& &=& \mathcal{G}\big(\vec{\bB{\psi}}\big) &\qquad \forall\,\vec{\bB{\psi}}\in\bQ_1\,.
\end{array}
\end{equation}
where for given $\varphi\in\L^\rho(\O)$ and $\bB{w}\in\bL^r(\O)$, the bilinear forms
$a_\varphi:(\bH_1)^2\to\R$, $b, b_{\bB{w}}:\bH_1\times\bQ_1\to\R$ and $c:(\bQ_1)^2\to\R$, and functionals $\mathcal{F}_\varphi:\bH_1\to\R$ and $\mathcal{G}:\bQ_1\to\R$ are defined by
\begin{subequations}
\label{def:bforms:mass}
\begin{alignat}{2}
a_\varphi\big(\vtmass,\vsmass\big) :={}
\int_\O \mathcal{D}_1(\varphi)\tmass\cdot\bB{s} -\int_\O\vsigmass\cdot\bB{s} +\int_\O\tmass\cdot\bB{\varsigma}, &\qquad &&
b\big(\vsmass,\vec{\bB{\psi}}\big) :={}
\int_\O\psi\,\div(\bB{\varsigma}) + \left\langle \bB{\varsigma}\cdot\bB{\nu},\chi \right\rangle_\Gamma\,, \label{def:a:b:mass}\\ 
\widetilde b_{\bB{w}}\big(\vsmass,\vec{\bB{\psi}}\big) :={}
\int_\O\psi\,\bB{w}\cdot\bB{s}, &\qquad&& 
c\big(\vec{\bB{\phi}},\vec{\bB{\psi}}\big) :={}
\int_\O\delta_1\phi\psi\,, \label{def:btilde:c:mass} \\ 
\mathcal{F}_\varphi(\vsmass) :={}
\int_\O \fbk(\varphi)\bB{k}\cdot\bB{s}, &\qquad&& 
\mathcal{G}\big(\vec{\bB{\psi}}\big) :={}
-\int_\O l_1\psi + \left\langle\Phi,\chi\right\rangle_\Gamma\,, \label{def:F:G:mass}  
\end{alignat}
\end{subequations}
for all $\vtmass,\vsmass\in\bH_1$ and $\vec{\bB{\phi}},\vec{\bB{\psi}}\in\bQ_1$.
The product spaces $\bH_1$ and $\bQ_1$ are endowed with the norms
\begin{align*}
\|\vsmass\|_{\bH_1} &:= \|\bB{s}\|_{0;\O} + \|\bB{\varsigma}\|_{\div_\varrho;\O}
\quad
\forall\,\vsmass\in\bH_1\,, \qquad
\|\vec{\bB{\psi}}\|_{\bQ_1} := \|\psi\|_{0,\rho;\O} + \|\chi\|_{1/2,\Gamma}
\quad
\forall\,\vec{\bB{\psi}}\in\bQ_1\,.
\end{align*}
Thanks to Cauchy--Schwarz and H\"older inequalities, together with the boundedness of the normal trace operator and the continuous injection $i_\rho:\mathrm{H}^1(\O)\hookrightarrow\L^\rho(\O)$, we have
\begin{subequations} \label{eq:bounded:mass}
\begin{alignat}{5}
| a_\varphi\big(\vtmass,\vsmass\big)|
&\leq \|a\| \|\vtmass\|_{\bH_1} \|\vsmass\|_{\bH_1}\,, \qquad &
\big| b\big(\vsmass,\vec{\bB{\psi}}\big) \big|
&\leq \|b\| \|\vsmass\|_{\bH_1} \|\vec{\bB{\psi}}\|_{\bQ_1}\,, &&\\
| \widetilde b_{\bB{w}} \big(\vsmass,\vec{\bB{\psi}}\big) |
&\leq \|\bB{w}\|_{0,r;\O} \|\vsmass\|_{\bH_1} \|\vec{\bB{\psi}}\|_{\bQ_1}\,, \qquad &
\big| c\big(\vec{\bB{\phi}},\vec{\bB{\psi}}\big) \big|
&\leq \|c\| \|\vec{\bB{\phi}}\|_{\bQ_1} \|\vec{\bB{\psi}}\|_{\bQ_1}\,, &&\\
\big| \mathcal{F}_\varphi(\vsmass) \big|
&\leq \|\mathcal{F}\| \|\vsmass\|_{\bH_1}\,, \qquad &
\big| \mathcal{G}\big(\vec{\bB{\psi}}\big) \big|
&\leq \|\mathcal{G}\| \|\vec{\bB{\psi}}\|_{\bQ_1}\,, &&
\end{alignat}
\end{subequations}
for all $\vtmass,\vsmass\in \bH_1$ and $\vec{\bB{\phi}},\vec{\bB{\psi}}\in \bQ_1$, where the constants above are
\begin{alignat*}{2}
\|a\| &:= \max\{1,D_1\}\,,\qquad &
\|b\| &:= \max\{1,\|i_\rho\|\}\,,\qquad
\|c\| := \delta_1|\O|^{(\rho-2)/\rho}\,, \\
\|\mathcal{F}\| &:= f_{\max}|\O|^{1/2}\,,\qquad &
\|\mathcal{G}\| &:= |\O|^{(\rho-2)/(2\rho)} \|l_1\|_{0;\O} + \|\Phi\|_{-1/2,\Gamma}\,.
\end{alignat*}

\subsection{Heat equation} \label{sec:CF:heat}

We begin by observing that the heat equation \eqref{eq:main:heat} has a structure similar to that of the concentration equation \eqref{eq:main:mass}. Therefore, we follow the same approach to derive its mixed formulation. We introduce the heat flux $\vsigheat$, given by the expression inside the divergence operator in \eqref{eq:main:heat}, and the temperature gradient $\theat$. Thus, the triplet $(\theat,\vsigheat,\theta)$ satisfies
\begin{align}
\vsigheat = -\theta\bB{u} + \mathcal{D}_2(\phi)\theat\,,
\qquad
\theat = \nabla\theta\,,
\qquad
\delta_2\theta - \div(\vsigheat) = l_2 &\qin \O\,,  \label{eq:PDEmixed:heat}\\
\theta = \theta_{\rm D} \qon\Gamma_{\rm D}\,,
\qquad
\vsigheat\cdot\bB{\nu} = 0 &\qon\Gamma_{\rm N}\,. \label{cc:PDEmixed:heat}
\end{align}
Testing the three equations in \eqref{eq:PDEmixed:heat} against $\bB{q}\in\bL^2(\O)$, $\widetilde{\bB{\tau}}\in
\bH_{\Gamma_{\rm N}}(\div_\varrho;\O)$, and $\varkappa\in\L^\rho(\O)$, respectively, and applying integration by parts to the term containing $\nabla\theta$, we obtain
\begin{subequations} 
\begin{alignat*}{2}
- \int_\O\vsigheat\cdot\bB{q} - \int_\O\theta\bB{u}\cdot\bB{q} + \int_\O\mathcal{D}_2(\phi)\theat\cdot\bB{q}
&= 0 && \qquad \forall\,\bB{q}\in\bL^2(\O)\,, \\
\int_\O\theat\cdot\widetilde{\bB{\tau}} + \int_\O\theta\,\div(\widetilde{\bB{\tau}})
&= \left\langle \widetilde{\bB{\tau}}\cdot\bB{\nu}, \theta_{\rm D} \right\rangle_{\Gamma_{\rm D}}
&&\qquad \forall\, \widetilde{\bB{\tau}} \in\bH_{\Gamma_{\rm N}}(\div_\varrho;\O)\,,\\
\int_\O\varkappa\,\div(\vsigheat) - \int_\O\delta_2\theta\varkappa
&= -\int_\O l_2\varkappa
&&\qquad \forall\,\varkappa\in\L^\rho(\O)\,.
\end{alignat*}
\end{subequations}
We now introduce the product space $\bH_2 := \bL^2(\O) \times \bH_{\Gamma_{\rm N}}(\div_\varrho;\O)$,
and the trial and test vectors
\begin{align*}
\vtheat := (\theat,\vsigheat)\in\bH_2\,,   \qquad
\vsheat := (\bB{q},\widetilde{\bB{\tau}})\in\bH_2\,.
\end{align*}
The weak formulation of \eqref{eq:PDEmixed:heat}--\eqref{cc:PDEmixed:heat}, and consequently that of \eqref{eq:main:heat}, is given by the following perturbed saddle-point problem: Given $\bB{u}\in\bL^r(\O)$ and $\phi\in\L^\rho(\O)$, find $(\vtheat,\theta)\in\bH_2\times\L^\rho(\O)$ such that
\begin{equation}
\label{syst:weak:heat}
\begin{array}{cclclcll}
\mfk{a}_\phi\big(\vtheat,\vsheat\big)
&+&
\mfk{b}\big(\vsheat,\theta\big)
&-&
\mfk{\widetilde{b}}_{\bB{u}}\big(\vsheat,\theta\big)
&=&
\mfk{F}(\vsheat)
&\qquad
\forall\,\vsheat\in\bH_2\,, \\[1ex]
\mfk{b}\big(\vtheat,\varkappa\big)
&-&
\mfk{c}(\theta,\varkappa)
&&
&=&
\mfk{G}(\varkappa)
&\qquad
\forall\,\varkappa\in\L^\rho(\O)\,,
\end{array}
\end{equation}
where for $\varphi\in\L^\rho(\O)$ and $\bB{w}\in\bL^r(\O)$, the bilinear forms $\mfk{a}_\varphi:(\bH_2)^2\to\R$, $\mfk{b},\mfk{\widetilde{b}}_{\bB{w}}:\bH_2\times\L^\rho(\O)\to\R$ and $\mfk{c}:(\L^\rho(\O))^2\to\R$, and functionals $\mfk{F}:\bH_2\to\R$ and $\mfk{G}:\L^\rho(\O)\to\R$ are defined by
\begin{subequations}
\begin{alignat*}{2}
\mfk{a}_\varphi\big(\vtheat,\vsheat\big)
:={} \int_\O\mathcal{D}_2(\varphi)\theat\cdot\bB{q} - \int_\O\vsigheat\cdot\bB{q} + \int_\O\theat\cdot\widetilde{\bB{\tau}}\,, &\qquad &&
\mfk{b}(\vsheat,\varkappa)
:={} \int_\O\varkappa\,\div(\widetilde{\bB{\tau}})\,,\\
\mfk{c}(\theta,\varkappa)
:={} \int_\O\delta_2\theta\varkappa\,, &\qquad &&
\mfk{\widetilde{b}}_{\bB{w}}(\vsheat,\varkappa)
:={} \int_\O\varkappa\bB{w}\cdot\bB{q}\,,\\
\mfk{F}(\vsheat)
:={} \left\langle \widetilde{\bB{\tau}}\cdot\bB{\nu}, \theta_{\rm D} \right\rangle_{\Gamma_{\rm D}},&\qquad &&
\mfk{G}(\varkappa)
:={} -\int_\O l_2\varkappa\,,
\end{alignat*}
\end{subequations}
for all $\vtheat,\vsheat\in\bH_2$ and $\theta,\varkappa\in\L^\rho(\O)$. The product space $\bH_2$ is endowed with the norm
\begin{align*}
\|\vsheat\|_{\bH_2}
:=
\|\bB{q}\|_{0;\O}
+
\|\widetilde{\bB{\tau}}\|_{\div_\varrho;\O}
\qquad
\forall\,\vsheat\in\bH_2\,.
\end{align*}
Similarly to the bounds found in \eqref{eq:bounded:mass}, applying the Cauchy--Schwarz and H\"older inequalities, the boundedness of the normal trace operator and the continuous injection $i_\rho$, among others, we obtain the bounds
\begin{subequations}
\label{eq:bounded:heat}
\begin{alignat}{3}
| \mfk{a}_\varphi\big(\vtheat,\vsheat\big)|
&\leq \|\mfk{a}\| \|\vtheat\|_{\bH_2} \|\vsheat\|_{\bH_2}\,,  &\qquad &&
| \mfk{b}\big(\vsheat,\varkappa\big) |
&\leq \|\mfk{b}\| \|\vsheat\|_{\bH_2} \|\varkappa\|_{0,\rho;\O}\,,\\
| \mfk{\widetilde{b}}_{\bB{w}} \big(\vsheat,\varkappa\big) |
&\leq \|\mfk{\widetilde{b}}\| \|\bB{w}\|_{0,r;\O} \|\vsheat\|_{\bH_2} \|\varkappa\|_{0,\rho;\O}\,, &\qquad &&
| \mfk{c}(\theta,\varkappa) |
&\leq \|\mfk{c}\| \|\theta\|_{0,\rho;\O} \|\varkappa\|_{0,\rho;\O}\,,\\
| \mfk{F}(\vsheat) |
&\leq \|\mfk{F}\| \|\vsheat\|_{\bH_2}\,, &\qquad &&
|\mfk{G}(\varkappa)|
&\leq \|\mfk{G}\| \|\varkappa\|_{0,\rho;\O}\,,
\end{alignat}
\end{subequations}
for all $\vtheat,\vsheat\in\bH_2$ and $\theta,\varkappa\in\L^\rho(\O)$, where the constants above are
\begin{alignat*}{2}
\begin{aligned}
\|\mfk{a}\| &:= \max\{1,D_2\}, \quad &
\|\mfk{b}\| &:= 1, \quad
\|\mfk{\widetilde{b}}\| := 1, \quad
\|\mfk{c}\| := \delta_2|\O|^{(\rho-2)/\rho},\\
\|\mfk{F}\| &:= \max\{1,\|i_\rho\|\}, \|\theta_{\rm D}\|_{1/2,\Gamma_{\rm D}},\quad &
\|\mfk{G}\| &:= |\O|^{(\rho-2)/(2\rho)} \|l_2\|_{0;\O}.
\end{aligned}
\end{alignat*}

\subsection{Flow equations} \label{sec:CF:flow}

The third system corresponds to the flow equations given by the momentum balance \eqref{eq:main:flow1} and the continuity equation \eqref{eq:main:flow2}. We follow the mixed formulation introduced in \cite{Gatica2023} in which the pressure $p$ can be eliminated from the formulation. To this end, we introduce the symmetric part of the velocity gradient $\bB{\zeta}:=\bB{\varepsilon}(\bB{u})$, vorticity $\bB{\gamma}:=\frac12\left(\nabla\bB{u}-\nabla\bB{u}^{\tt t}\right)$, and the tensor associated with the viscous stress $\bB{\sigma}\in \Hdivtr$, and rewrite \eqref{eq:main:flow1}--\eqref{eq:main:flow2} equivalently as follows:
\begin{subequations}
\label{syst:PDEmixed:flow}
\begin{alignat}{2}
\left( \delta_3\I + \delta_4\mbf{K}^{-1}(\phi) \right)\bB{u} - \bdiv(\bB{\sigma})
&= \bB{l}_3 + \mbf{f}(\phi,\theta) && \qin\O\,, \label{eq:PDEmixed:flow:momentum} \\
\bB{\zeta} &= \nabla\bB{u}-\bB{\gamma} && \qin\O\,, \label{eq:PDEmixed:flow:gradient} \\
\bB{\sigma}^{\rmd} &= \mu(\phi,\theta)\bB{\zeta} && \qin\O\,, \label{eq:PDEmixed:flow:constitutive} \\
\bB{u} &= \bB{u}_{\rm D} && \qon\Gamma\,. \label{cc:PDEmixed:flow}
\end{alignat}
\end{subequations}
Testing \eqref{eq:PDEmixed:flow:momentum}--\eqref{eq:PDEmixed:flow:constitutive} against $\bB{w}\in\bL^r(\O)$, $\bB{\eta}\in\Ltwotr$, and $\bB{\tau}\in\Hdivtr$, respectively, integrating over $\O$, and applying integration by parts, we obtain the following weak formulation:
\begin{subequations}
\label{syst:pre:flow}
\begin{alignat}{2}
\int_\O \bB{w}\cdot\bdiv(\bB{\sigma}) - \int_\O \left( \delta_3\I + \delta_4\mbf{K}^{-1}(\phi) \right) \bB{u}\cdot\bB{w}
& = -\int_\O (\bB{l}_3+\mbf{f}(\phi,\theta))\cdot\bB{w}
&&\qquad
\forall\,\bB{w}\in\bL^r(\O)\,, \label{eq:pre:flow:1} \\[1ex]
-\int_\O \bB{\sigma}^{\rmd}:\bB{\eta} + \int_\O \mu(\phi,\theta)\bB{\zeta}:\bB{\eta}
& = 0
&&\qquad
\forall\,\bB{\eta}\in\Ltwotr\,, \label{eq:pre:flow:2} \\[1ex]
\int_\O \bB{\zeta}:\bB{\tau}^{\rmd} + \int_\O \bB{u}\cdot\bdiv(\bB{\tau}) + \int_\O \bB{\gamma}:\bB{\tau}
& = \left\langle \bB{\tau}\bB{\nu}, \bB{u}_{\rm D} \right\rangle_\Gamma
&&\qquad
\forall\,\bB{\tau}\in\Hdivtr\,, \label{eq:pre:flow:3} \\[1ex]
\int_\O \bB{\sigma}:\widetilde{\bB{\gamma}} &= 0
&&\qquad
\forall\, \widetilde{\bB{\gamma}} \in\mbb{L}^2_{\tt skew}(\O)\,. \label{eq:pre:flow:4}
\end{alignat}
\end{subequations}
The last equation imposes the symmetry of $\bB{\sigma}$ weakly, with $\widetilde{\bB{\gamma}} $ test function. In turn, to rewrite \eqref{syst:pre:flow} as a saddle-point problem, we introduce the product spaces $\mX := \Ltwotr\times\Hdivtr$ and $\mbb{M} := \bL^r(\O)\times\mbb{L}^2_{\tt skew}(\O)$,
and the following trial and test vectors:
\begin{alignat*}{3}
\vec{\bB{\zeta}} &:= (\bB{\zeta},\bB{\sigma})\in\mX   &&\qan&
\vec{\bB{\eta}}  &:= (\bB{\eta},\bB{\tau})\in\mX\,, \\
\vec{\bB{u}}     &:= (\bB{u},\bB{\gamma})\in\mbb{M}   &&\qan&
\vec{\bB{w}}     &:= (\bB{w},\widetilde{\bB{\gamma}})\in\mbb{M}\,.
\end{alignat*}
Adding \eqref{eq:pre:flow:2} to \eqref{eq:pre:flow:3}, and then \eqref{eq:pre:flow:1} to \eqref{eq:pre:flow:4}, respectively, we obtain the following linear perturbed saddle-point problem: Given $\phi,\theta\in\L^\rho(\O)$, find $(\vec{\bB{\zeta}},\vec{\bB{u}})\in\mX\times\mbb{M}$ such that
\begin{equation} \label{syst:weak:flow}
\begin{array}{ccccll}
\mbf{a}_{\phi,\theta}
\big(\vec{\bB{\zeta}},\vec{\bB{\eta}}\big)
&+&
\mbf{b}
\big(\vec{\bB{\eta}},\vec{\bB{u}}\big)
&=&
\mbf{G}\big(\vec{\bB{\eta}}\big)
&\qquad
\forall\,\vec{\bB{\eta}}\in\mX\,,
\\[1ex]
\mbf{b}
\big(\vec{\bB{\zeta}},\vec{\bB{w}}\big)
&-&
\mbf{c}_\phi
\big(\vec{\bB{u}},\vec{\bB{w}}\big)
&=&
\mbf{F}_{\phi,\theta}
\big(\vec{\bB{w}}\big)
&\qquad
\forall\,\vec{\bB{w}}\in\mbb{M}\,,
\end{array}
\end{equation}
where for $\varphi,\vartheta\in\L^\rho(\O)$, the bilinear forms $\mbf{a}_{\varphi,\vartheta}:\mX^2\to\R$, $\mbf{b}:\mX\times\mbb{M}\to\R$ and $\mbf{c}_\varphi:\mbb{M}^2\to\R$, and functionals $\mbf{G}:\mX\to\R$ and $\mbf{F}_{\varphi,\vartheta}:\mbb{M}\to\R$ are defined by
\begin{alignat}{2}\label{def:bforms:flow}
\begin{aligned}
\hspace{-0.2cm}\mbf{a}_{\varphi,\vartheta}\big(\vec{\bB{\zeta}},\vec{\bB{\eta}}\big)
:=  \int_\O \mu(\varphi,\vartheta) \bB{\zeta}:\bB{\eta} - \int_\O \bB{\sigma}^{\rmd}:\bB{\eta} + \int_\O \bB{\zeta}:\bB{\tau}^{\rmd}\,, &\quad &&
\mbf{b}\big(\vec{\bB{\eta}},\vec{\bB{w}}\big)
:= \int_\O\big( \bB{w}\cdot\bdiv(\bB{\tau}) + \bB{\tau}:\widetilde{\bB{\gamma}}\big)\,, \\
\mbf{c}_\varphi \big(\vec{\bB{u}},\vec{\bB{w}}\big)
:= \int_\O \left( \delta_3\I + \delta_4\mbf{K}^{-1}(\varphi) \right) \bB{u}\cdot\bB{w}\,,
&\quad &&
\mbf{G}\big(\vec{\bB{\eta}}\big)
:= \left\langle \bB{\tau}\bB{\nu}, \bB{u}_{\rm D} \right\rangle_\Gamma\,,\\
\mbf{F}_{\varphi,\vartheta}\big(\vec{\bB{w}}\big)
:= -\int_\O (\bB{l}_3+\mbf{f}(\varphi,\vartheta))\cdot\bB{w}\,.
\end{aligned}
\end{alignat}
The product spaces $\mX$ and $\mbb{M}$ are endowed with the norms
\begin{align*}
\|\vec{\bB{\eta}}\|_{\mX} &:= \|\bB{\eta}\|_{0;\O} + \|\bB{\tau}\|_{\bdiv_s;\O} \quad
\forall\,\vec{\bB{\eta}}\in\mX\,, \qquad
\|\vec{\bB{w}}\|_{\mbb{M}} := \|\bB{w}\|_{0,r;\O} + \|\widetilde{\bB{\gamma}}\|_{0;\O} \quad
\forall\,\vec{\bB{w}}\in\mbb{M}\,.
\end{align*}
As in the previous sections, we can bound the bilinear forms and functionals in \eqref{def:bforms:flow} as follows
\begin{subequations}
\begin{alignat*}{2}
| \mbf{a}_{\varphi,\vartheta} \big(\vec{\bB{\zeta}},\vec{\bB{\eta}}\big) |
&\leq \|\mbf{a}\| \|\vec{\bB{\zeta}}\|_{\mX} \|\vec{\bB{\eta}}\|_{\mX}\,, &\quad
|\mbf{b} \big(\vec{\bB{\eta}},\vec{\bB{w}}\big) |
&\leq \|\mbf{b}\|\|\vec{\bB{\eta}}\|_{\mX} \|\vec{\bB{w}}\|_{\mbb{M}}\,, \quad
|\mbf{c}_\varphi \big(\vec{\bB{u}},\vec{\bB{w}}\big)|
\leq \|\mbf{c}\| \|\vec{\bB{u}}\|_{\mbb{M}} \|\vec{\bB{w}}\|_{\mbb{M}}\,, \\
\big|\mbf{G}\big(\vec{\bB{\eta}}\big)\big|
&\leq \|\mbf{G}\| \|\vec{\bB{\eta}}\|_{\mX}\,, &\quad
\big| \mbf{F}_{\varphi,\vartheta} \big(\vec{\bB{w}}\big)\big|
&\leq \|\mbf{F}_{\varphi,\vartheta}\| \|\vec{\bB{w}}\|_{\mbb{M}}\,,
\end{alignat*}
\end{subequations}
for all $\vec{\bB{\zeta}},\vec{\bB{\eta}}\in\mX$ and all $\vec{\bB{u}},\vec{\bB{w}}\in\mbb{M}$, where
\begin{align*}
\|\mbf{a}\| &:= \max\{1,\mu_{\max}\}\,, \qquad
\|\mbf{b}\| := 1\,, \qquad
\|\mbf{c}\| := \left( \delta_3+\delta_4\kappa_{\max} \right) |\O|^{(r-2)/r}\,,\\
\|\mbf{G}\| &:= \max\{1,\|\bB{i}_r\|\} \|\bB{u}_{\rm D}\|_{1/2;\Gamma}\,,
\end{align*}
with $\bB{i}_r:\bH^1(\Omega)\mapsto \bL^r(\Omega)$ being the corresponding continuous injection, and
\begin{align*}
&\|\mbf{F}_{\varphi,\vartheta}\| := \|\bB{l}_3+\mathbf f(\phi,\vartheta)\|_{0,s;\Omega}
 \leq \|\bB{l}_3\|_{0,s;\O} +  \|\mbf{f}\| \big( \|\varphi\|_{0,\rho;\O} + \|\vartheta\|_{0,\rho;\O} \big)\,.
\end{align*}

\section{Continuous solvability analysis} \label{sec:CF:WP}

To analyse the well-posedness of system \eqref{syst:main} by means of a fully coupled mixed finite element formulation, and later on by a fixed-point strategy, we study in this section the linearized versions of the weak formulations of \eqref{syst:weak:mass} and \eqref{syst:weak:heat}, and \eqref{syst:weak:flow}. The existence and uniqueness of solutions for each linearized decoupled weak formulation is carried out thanks to the Babu$\rm\check{s}$ka-Brezzi theory in Banach spaces \cite{Bernardi1988,Correa2022} and following the ideas of \cite{Alvarez2026} and \cite{Careaga2025a}.

\subsection{Fixed-point operator}

In order to determine a suitable operator connecting the solutions of the three sub-problems \eqref{syst:weak:mass}, \eqref{syst:weak:heat} and \eqref{syst:weak:flow}, we begin by introducing the three solution operators defined by each saddle-point problem. We let $\mathrm S_1,\mathrm{S}_2:\mathrm L^\rho(\O)\times \bL^r(\O)\to \mathrm L^\rho(\O)$ and $\mbf{S}:(\L^\rho(\O))^2\to \bL^r(\O)$ be the solution operators defined for all $\varphi,\vartheta\in \mathrm L^\rho(\O)$ and $\bB{w}\in \bL^r(\O)$ by
\begin{equation*}
\mathrm S_1(\varphi,\bB{w})\,:=\, \phi\,, \qquad \mathrm{S}_2(\varphi,\bB{w}):= \theta\,,\qquad \bS(\varphi,\vartheta) := \bB{u}\,,
\end{equation*}
where $\phi\in \mathrm L^\rho(\O)$, $\theta \in \mathrm L^\rho(\O)$ and $\bB{u}\in\bL^r(\O)$ are components of the respective solutions to the following linearized continuous problems:
\begin{itemize}[leftmargin=\widthof{\textbf{\quad(CP.3)} }, itemsep=-5pt]
 \item[\textbf{(CP.1)}] Find $(\vtmass,\vec{\bB{\phi}})\in \bH_1\times \bQ_1$ such that:
\begin{equation}\label{syst:FP:weak:mass}
\begin{array}{cclclcll}
a_{\varphi}\big(\vtmass,\vsmass\big) & + & b \big(\vsmass, \vec{\bB{\phi}}\big) & - &\widetilde{b}_{\bB{w}}(\vsmass, \vec{\bB{\phi}}) & = & \mathcal{F}_{\varphi} (\vsmass) &\qquad  \forall\, \vsmass\in \mathbf H_1\,,\\[1ex]
  b \big(\vtmass,\vec{\bB{\psi}}\big)  &-& c(\vec{\bB{\phi}},\vec{\bB{\psi}}) & &  &=& \mathcal{G}(\vec{\bB{\psi}})&\qquad  \forall\, \vec{\bB{\psi}}\in \mathbf Q_1\,.
\end{array}
\end{equation}
\item[\textbf{(CP.2)}] Find $(\vtheat,\theta)\in \bH_2\times \L^\rho(\O)$ such that
\begin{equation}\label{syst:FP:weak:heat}
\begin{array}{cclclcll}
\mfk{a}_{\varphi}\big(\vtheat,\vsheat\big) &+&\mfk{b} \big(\vsheat,\theta\big) &-& \mfk{\widetilde{b}}_{\bB{w}}(\vsheat,\theta) & = & \mfk{F} (\vec{\bB{q}}) &\qquad  \forall\, \vec{\bB{q}}\in \mathbf H_2\,,\\[1ex]
\mfk{b} \big(\vtheat,\varkappa\big) &-& \mfk{c}(\theta,\varkappa) & &  &=& \mfk{G}(\varkappa)&\qquad  \forall\, \varkappa\in \mathrm{L}^\rho(\O)\,.
\end{array}
\end{equation}
\item[\textbf{(CP.3)}] Find $(\vec{\bB{\zeta}},\vec{\bB{u}})\in \mX\times \mbb{M}$ such that
\begin{equation} \label{syst:FP:weak:flow}
\begin{array}{ccccll}
\mbf{a}_{\varphi,\vartheta}(\vec{\bB{\zeta}},\vec{\bB{\eta}}) &+& \mbf{b}(\vec{\bB{\eta}},\vec{\bB{u}}) & = & \mbf{G}(\vec{\bB{\eta}}) &\qquad \forall\,\vec{\bB{\eta}}\in \mX\,,\\[1ex]
\mbf{b}(\vec{\bB{\zeta}},\vec{\bB{w}}) &-& \mbf{c}_{\varphi}(\vec{\bB{u}},\vec{\bB{w}}) & = & \mbf{F}_{\varphi,\vartheta}(\vec{\bB{w}}) & \qquad \forall\, \vec{\bB{w}}\in \mbb{M}\,.
\end{array}
\end{equation}
\end{itemize}
\begin{figure}[t!]
\centering
 \includegraphics[scale=0.8]{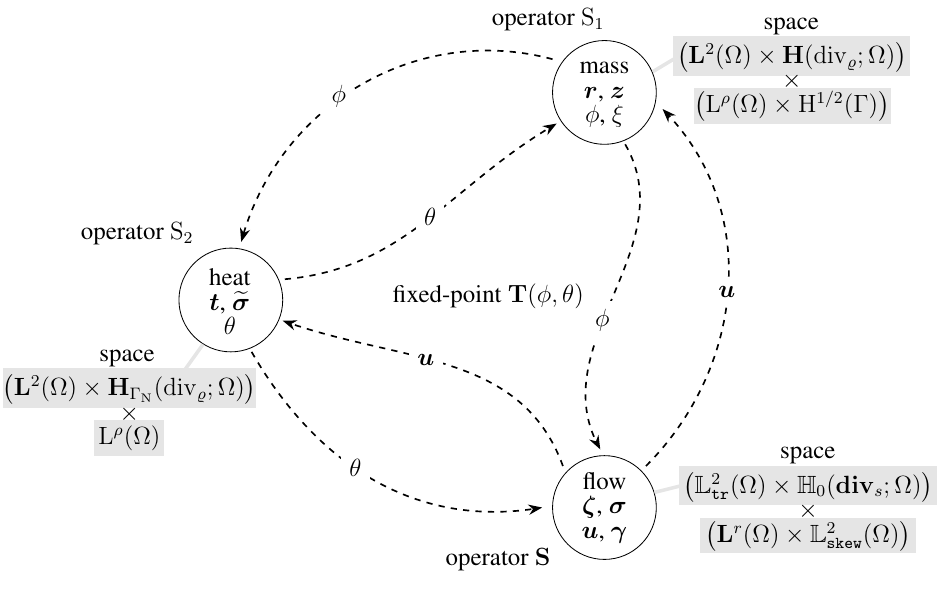}
 \caption{Diagram of the coupled system and fixed-point structure.}\label{fig:fixed:point}
\end{figure}
The fully coupled formulation composed of the continuous problems \textbf{(CP.1)}, \textbf{(CP.2)} and \textbf{(CP.3)} is established as follows: Find the three pairs $(\vec{\bB{r}},\vec{\bB{\phi}})\in \bH_{1}\times\mbf{Q}_{1}$, $(\vec{\bB{t}},\theta)\in \bH_{2}\times \mathrm{L}^\rho(\O)$, and $(\vec{\bB{\zeta}},\vec{\bB{u}})\in \mbb{X}\times \mbb{M}$ such that
\begin{equation}
\label{syst:weak:coupled}
\begin{array}{cclclcll}
a_{\phi}\big(\vtmass,\vsmass\big) & + & b\big(\vsmass,\vec{\bB\phi}\big) & - & \widetilde b_{\bB u} \big(\vsmass,\vec{\bB\phi}\big)
& = & \mathcal F_{\phi}(\vsmass) & \quad \forall\,\vsmass\in\bH_{1}\,,\\[1ex]
b\big(\vtmass,\vec{\bB\psi}\big) & - & c\big(\vec{\bB\phi},\vec{\bB\psi}\big) &&
& = & \mathcal G(\vec{\bB\psi})    & \quad \forall\,\vec{\bB\psi}\in\bQ_{1}\,,\\[1ex]
\mfk a_{\phi} \big(\vtheat,\vsheat\big) & + & \mfk b\big(\vsheat,\theta\big) & - & \mfk{\widetilde b}_{\bB u} \big(\vsheat,\theta\big)
& = & \mfk F(\vsheat) &\quad \forall\,\vsheat\in\bH_{2}\,,\\[1ex]
\mfk b\big(\vtheat,\varkappa\big) & - & \mfk c(\theta,\varkappa) &&
& = & \mfk G(\varkappa) &\quad \forall\,\varkappa\in\mathrm{L}^\rho(\O)\,, \\[1ex]
\mbf a_{\phi,\theta} \big(\vec{\bB\zeta},\vec{\bB\eta}\big) & + & \mbf b\big(\vec{\bB\eta},\vec{\bB u}\big) &&
& = & \mbf G(\vec{\bB\eta}) &\quad \forall\,\vec{\bB\eta}\in\mX\,,\\[1ex]
\mbf b\big(\vec{\bB\zeta},\vec{\bB w}\big) & - & \mbf c_{\phi} \big(\vec{\bB u},\vec{\bB w}\big) &&
& = & \mbf F_{\phi,\theta} \big(\vec{\bB w}\big) &\quad \forall\,\vec{\bB w}\in\mbb M\,.
\end{array}
\end{equation}
Finally, introducing the fixed-point operator $\mbf{T}:(\L^\rho(\O))^2\to (\L^\rho(\O))^2$ defined by
\begin{equation} \label{def:T}
\mbf{T}(\varphi,\vartheta):= \Big(\mathrm{S}_1\big(\varphi,\bS(\varphi,\vartheta)\big),\,\mathrm{S}_2\big(\mathrm{S}_1(\varphi,\bS(\varphi,\vartheta)),\bS(\varphi,\vartheta)\big)\Big)\,,
\end{equation}
we conclude that solving the coupled problem given by \eqref{syst:weak:coupled} is equivalent to finding a fixed point to the operator $\mbf{T}$, that is, finding $(\phi,\theta)\in (\L^\rho(\O))^2$ such that
\begin{equation*}
\mbf{T}(\phi,\theta) = (\phi,\theta)\,.
\end{equation*}
The fixed-point method, including the unknowns and functional spaces involved are schematised in the diagram in Figure~\ref{fig:fixed:point}.
\subsection{Well-posedness concentration equation}
\label{sec:CF:WP:concentration}

We follow the approach in \cite{Gatica2025} to establish that the operator ${\rm S}_1$ is well-defined. Specifically, we first apply \cite[Theorem 2.1]{Bernardi1988} to establish the stability of $a_\varphi$ on the kernel induced by $b$.
Then, we apply \cite[Theorem 3.2]{Gatica2023} to the restriction of the problem to the zero-mean subspace. The stability on the full concentration space is subsequently obtained by decomposing the concentration and boundary variables into a zero-mean component and a constant component controlled by the bilinear form $c$. Finally, the convective term is treated as a sufficiently small perturbation, and the Banach--Ne\v{c}as--Babu\v{s}ka theorem (cf.~\cite[Theorem 2.6]{Ern2004}) yields the well-posedness of \eqref{syst:FP:weak:mass}.

In what follows, we consider $\varphi\in\L^\rho(\O)$ and $\bB{w}\in\bL^r(\O)$ as given functions, and define the bounded global bilinear form $\bB{A}_\varphi: (\bH_1\times\bQ_1)^2 \to \R$ by
\begin{equation*}
\begin{aligned}
\bB{A}_\varphi
\big( (\vtmass,\vec{\bB{\phi}}), (\vsmass,\vec{\bB{\psi}}) \big)
:={}
a_\varphi(\vtmass,\vsmass) +
b(\vsmass,\vec{\bB{\phi}}) +
b(\vtmass,\vec{\bB{\psi}}) -
c(\vec{\bB{\phi}},\vec{\bB{\psi}})\,,
\end{aligned}
\end{equation*}
for all $(\vtmass,\vec{\bB{\phi}}),(\vsmass,\vec{\bB{\psi}})\in\bH_1\times\bQ_1$.
The boundedness of $\bB{A}_\varphi$ follows directly from the
boundedness of $a_\varphi$, $b$, and $c$
(cf.~\eqref{eq:bounded:mass}). Then, we observe that $a_\varphi$ and $c$ are positive
semi-definite. More precisely, for
$\vsmass=(\bB{s},\bB{\varsigma})\in\bH_1$ and
$\vec{\bB{\psi}}=(\psi,\chi)\in\bQ_1$, there hold
\begin{alignat*}{2}
a_\varphi(\vsmass,\vsmass)\geq d_1 \|\bB{s}\|_{0;\O}^2 \geq 0\,,\qquad
c(\vec{\bB{\psi}},\vec{\bB{\psi}})
= \delta_1 \|\psi\|_{0;\O}^2 \geq 0\,.
\end{alignat*}
Furthermore, $c$ is symmetric. Now, introducing the closed subspace
$ \bQ_{1,0} := \L^\rho_0(\O)\times\mathrm H^{1/2}(\Gamma) \subset\bQ_1$,
the kernel of $b$ with respect to its first argument, when its second argument is restricted to $\bQ_{1,0}$, is defined as
\begin{align*}
\mathcal{V} &:= \Big\{\vsmass\in\bH_1:\, b(\vsmass,\vec{\bB{\psi}})=0 \quad \forall\,\vec{\bB{\psi}}\in\bQ_{1,0} \Big\} = \bL^2(\O)\times\mathcal V_0,\\
\mathcal V_0 &:= \Big\{\bB{\varsigma}\in\bH(\div_\varrho;\O):\, \div(\bB{\varsigma})=0 \qin\O \qan \bB{\varsigma}\cdot\bB{\nu}=0 \qon\Gamma \Big\}.
\end{align*}
The next lemma provides the inf-sup conditions for the bilinear form $a_\varphi$.

\begin{lemma}
\label{lemma:infsup:a:mass}
Let $\varphi\in\L^\rho(\O)$, where $\rho$ lies within the range specified in \eqref{def:ranges}.
Then, there exists a constant $\alpha_1>0$, independent of $\varphi$, such that
\begin{alignat}{2}
\sup_{\substack{ \vsmass_{*}\in\mathcal V\setminus\{\vec{\bB{0}}\} }}
\frac{ a_\varphi(\vtmass,\vsmass_{*}) }{ \|\vsmass_{*}\|_{\bH_1} }
\geq \alpha_1 \|\vtmass\|_{\bH_1}
\qquad\text{and} \qquad
\sup_{\substack{ \vtmass_{*}\in\mathcal V\setminus\{\vec{\bB{0}}\} }}
\frac{ a_\varphi(\vtmass_{*},\vsmass) }{\|\vtmass_{*}\|_{\bH_1}}
\geq \alpha_1 \|\vsmass\|_{\bH_1},
\label{eq:IS:a:mass}
\end{alignat}
for all $\vtmass,\vsmass\in\mathcal{V}$.
\end{lemma}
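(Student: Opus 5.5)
On the kernel $\mathcal V=\bL^2(\O)\times\mathcal V_0$ every $\bB{\varsigma}\in\mathcal V_0$ satisfies $\div(\bB{\varsigma})=0$. Hence $\|\bB{\varsigma}\|_{\div_\varrho;\O}=\|\bB{\varsigma}\|_{0;\O}$, and on $\mathcal V$ the norm $\|\cdot\|_{\bH_1}$ reduces to the $\bL^2\times\bL^2$ norm. The proof is therefore a Hilbert-type argument for the form
\[
a_\varphi(\vtmass,\vsmass)=\int_\O \mathcal{D}_1(\varphi)\tmass\cdot\bB{s}-\int_\O\vsigmass\cdot\bB{s}+\int_\O\tmass\cdot\bB{\varsigma}.
\]
This form is not coercive, because the $\vsigmass$-component carries no positive contribution. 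So the inf-sup conditions must be built by hand. The plan is to follow the structure of \cite[Theorem 2.1]{Bernardi1988} (or to verify its hypotheses directly): for a given $\vtmass=(\tmass,\vsigmass)\in\mathcal V$, exhibit a single test function $\vsmass_*\in\mathcal V$ with $\|\vsmass_*\|_{\bH_1}\le C\|\vtmass\|_{\bH_1}$ and $a_\varphi(\vtmass,\vsmass_*)\ge c\|\vtmass\|_{\bH_1}^2$, using only $d_1\le\mathcal D_1\le D_1$, so that the constant is independent of $\varphi$.

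\textbf{Step 1 (first inf-sup).} Take $\vsmass_*=(\bB{s},\bB{\varsigma}):=(\tmass-\beta\vsigmass,\;\vsigmass)$. This lies in $\mathcal V$, since $\vsigmass\in\mathcal V_0$ and $\bB{s}\in\bL^2(\O)$ is unconstrained. Then
\[
a_\varphi(\vtmass,\vsmass_*)=\int_\O\mathcal D_1(\varphi)|\tmass|^2-\beta\int_\O\mathcal D_1(\varphi)\tmass\cdot\vsigmass-\int_\O\vsigmass\cdot\tmass+\beta\|\vsigmass\|_{0;\O}^2+\int_\O\tmass\cdot\vsigmass .
\]
The two mixed terms $\mp\int\vsigmass\cdot\tmass$ cancel, which leaves
\[
a_\varphi(\vtmass,\vsmass_*)\ge d_1\|\tmass\|^2-\beta D_1\|\tmass\|\|\vsigmass\|+\beta\|\vsigmass\|^2 .
\]
Young's inequality gives $\beta D_1\|\tmass\|\|\vsigmass\|\le \tfrac{d_1}{2}\|\tmass\|^2+\tfrac{\beta^2D_1^2}{2d_1}\|\vsigmass\|^2$. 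Choosing $\beta=d_1/D_1^2$ then yields the lower bound $\tfrac{d_1}{2}\|\tmass\|^2+\tfrac{\beta}{2}\|\vsigmass\|^2$. Since $\|\vsmass_*\|_{\bH_1}\le(1+\beta)\|\vtmass\|_{\bH_1}$, and the $\bH_1$-norm on $\mathcal V$ is equivalent to the Euclidean combination of the two $\mathrm{L}^2$ norms with a factor $2$, we obtain $\alpha_1$ depending only on $d_1$ and $D_1$.

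\textbf{Step 2 (second inf-sup).} The second condition is the same statement for the adjoint form $(\vsmass,\vtmass)\mapsto a_\varphi(\vtmass,\vsmass)$. For given $\vsmass=(\bB{s},\bB{\varsigma})\in\mathcal V$, take the trial function $\vtmass_*=(\bB{s}+\beta\bB{\varsigma},\,-\bB{\varsigma})\in\mathcal V$. Then
\[
a_\varphi(\vtmass_*,\vsmass)=\int\mathcal D_1|\bB{s}|^2+\beta\int\mathcal D_1\bB{\varsigma}\cdot\bB{s}+\|\ldots\|,
\]
where the remaining terms are $+\int\bB{\varsigma}\cdot\bB{s}+\int\bB{s}\cdot\bB{\varsigma}+\beta\|\bB{\varsigma}\|^2$. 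The signs may need adjusting: if the two cross terms do not cancel, I would instead take $\vtmass_*=(\bB{s}+\beta\bB{\varsigma},\,\bB{s}\text{-independent choice } -\bB{\varsigma}+\ldots)$ so that $-\int\vsigmass_*\cdot\bB{s}$ cancels $\int\tmass_*\cdot\bB{\varsigma}$ in its $\bB{s}\cdot\bB{\varsigma}$ part. One checks that $\vtmass_*=(\bB{s}+\beta\bB{\varsigma},\,\bB{s}^\perp)$ is not available because of the $\mathcal V_0$ constraint. Hence the cross terms must be absorbed by Young's inequality, using the same small $\beta$ together with the positive terms $d_1\|\bB{s}\|^2$ and $\beta\|\bB{\varsigma}\|^2$.

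\textbf{Main obstacle.} This sign and cancellation bookkeeping in the adjoint direction is the expected main obstacle. The second component of the test or trial function must stay in $\mathcal V_0$, so it can only be a multiple of the given $\vsigmass$ (respectively $\bB{\varsigma}$). Consequently the leftover cross term $\pm 2\int\bB{s}\cdot\bB{\varsigma}$ has to be controlled by tuning $\beta$, if necessary replacing $\beta$ by a pair of parameters, one multiplying each component.

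A fallback is to invoke the known result that a bounded form satisfies both inf-sup conditions when it is injective and its associated operator is Fredholm. More concretely, one can check the abstract hypotheses of \cite[Theorem 2.1]{Bernardi1988} using the block structure: the block $\int\mathcal D_1\tmass\cdot\bB{s}$ is coercive, and the off-diagonal blocks $\mp\int\vsigmass\cdot\bB{s}$ and $\int\tmass\cdot\bB{\varsigma}$ are skew-compatible. This gives the second inequality from the first via the identity $a_\varphi(\vtmass,\vsmass)=a_\varphi^{\tt t}$ up to the sign flip $\vsigmass\mapsto-\vsigmass$, which is an isometry of $\mathcal V$.
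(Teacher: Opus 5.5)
\textbf{Step 1 and the gap in Step 2.} Your Step 1 is correct. With $\vsmass_*=(\tmass-\beta\vsigmass,\vsigmass)\in\mathcal V$ the cross terms cancel, and $\beta=d_1/D_1^2$ gives an explicit $\alpha_1$ depending only on $d_1,D_1$.

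The main line of Step 2, however, fails as written. With $\vtmass_*=(\bB{s}+\beta\bB{\varsigma},-\bB{\varsigma})$, the leftover term $2\int_\O\bB{s}\cdot\bB{\varsigma}$ cannot be absorbed for any $\beta$. Take $\bB{s}=-t\bB{\varsigma}$ with $\bB{\varsigma}\in\mathcal V_0\setminus\{\bB{0}\}$; this is admissible because $\bB{s}$ is unconstrained in $\bL^2(\O)$. Set $m:=\|\bB{\varsigma}\|_{0;\O}^{-2}\int_\O\mathcal D_1(\varphi)|\bB{\varsigma}|^2\in[d_1,D_1]$. Then
\[
a_\varphi(\vtmass_*,\vsmass)=\big(mt^2-(\beta m+2)t+\beta\big)\|\bB{\varsigma}\|_{0;\O}^2\,,
\]
which is a quadratic in $t$ with positive leading coefficient and discriminant $\beta^2m^2+4>0$. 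It is therefore negative for suitable $t$.

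Your claim that the cancellation is unavailable is also wrong. The admissible choice $\vtmass_*=(\bB{s}+\beta\bB{\varsigma},\bB{\varsigma})$ gives $a_\varphi(\vtmass_*,\vsmass)=\int_\O\mathcal D_1(\varphi)|\bB{s}|^2+\beta\int_\O\mathcal D_1(\varphi)\bB{\varsigma}\cdot\bB{s}+\beta\|\bB{\varsigma}\|_{0;\O}^2$. The Young argument of Step 1 then applies verbatim.

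This is exactly what your final fallback sentence encodes. Define the isometric involution $J(\tmass,\vsigmass):=(\tmass,-\vsigmass)$ of $\mathcal V$. One checks $a_\varphi(\vtmass,\vsmass)=a_\varphi(J\vsmass,J\vtmass)$, so the second inf-sup condition follows from the first applied to $J\vsmass$. Your trial function was Step 1's test function for $J\vsmass$, with the final application of $J$ omitted. Make this the actual argument and drop the Young-absorption paragraph; the Fredholm remark is unnecessary and would in any case require index zero. With that change the proof is complete.

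\textbf{Comparison with the paper.} The paper argues differently. It writes $a_\varphi(\vtmass,\vsmass)=\bar a_\varphi(\tmass,\bB{s})+\bar b_1(\bB{s},\vsigmass)+\bar b_2(\tmass,\bB{\varsigma})$ as a generalized, non-symmetric saddle-point form on $\bL^2(\O)\times\mathcal V_0$. It observes that $\bar a_\varphi$ is $\bL^2$-elliptic with constant $d_1$. It shows that $\bar b_1$ and $\bar b_2$ satisfy inf-sup conditions with constant $1$, choosing $\bB{s}=-\bB{\varsigma}$ and $\tmass=\bB{\varsigma}$ respectively, and using $\|\bB{\varsigma}\|_{\div_\varrho;\O}=\|\bB{\varsigma}\|_{0;\O}$ on $\mathcal V_0$. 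It then invokes \cite[Theorem 2.1]{Bernardi1988}, which yields both inf-sup conditions at once.

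Your explicit construction is essentially a hands-on instance of that theorem for this structure; the $-\beta\vsigmass$ shift plays the role of the lifting $\bB{s}=-\bB{\varsigma}$. Your route is self-contained and gives a computable $\alpha_1$, but it must treat the adjoint direction separately via $J$. The abstract theorem handles both directions symmetrically.
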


\begin{proof}
We first observe that $a_\varphi$ can be regarded as the global bilinear form of a nested saddle-point problem with $a_\varphi(\vtmass,\vsmass) = \bar a_\varphi(\tmass,\bB{s})+\bar b_1(\bB{s},\vsigmass)+\bar b_2(\tmass,\bB{\varsigma})$, where $\bar a_\varphi:\bL^2(\O)\times\bL^2(\O)\to\R$, and $\bar b_1,\bar b_2: \bL^2(\O)\times\mathcal V_0 \to\R$, are bilinear forms defined by
$$
\bar a_\varphi(\tmass,\bB{s})
:=
\int_\O
\mathcal D_1(\varphi) \tmass\cdot\bB{s}\,,\quad
\bar b_1(\bB{s},\bB{\varsigma})
:=
-\int_\O
\bB{\varsigma}\cdot\bB{s}\,,
\qan
\bar b_2(\tmass,\bB{\varsigma})
:=
\int_\O
\tmass\cdot\bB{\varsigma}\,.
$$
Thanks to the lower bound of $\mathcal D_1$ (cf.~\eqref{eq:data:bounds}), $\bar a_\varphi$ is $\bL^2(\O)$-elliptic. In turn, for each $\bB{\varsigma}\in\mathcal V_0$, choosing $\bB{s}=-\bB{\varsigma}$ for $\bar b_1$ and $\tmass=\bB{\varsigma}$ for $\bar b_2$ we respectively have
\begin{align*}
\sup_{\substack{
\bB{s}\in\bL^2(\O)\setminus\{\bB{0}\}}}
\frac{ \bar b_1(\bB{s},\bB{\varsigma})}{ \|\bB{s}\|_{0;\O} }
\geq
\|\bB{\varsigma}\|_{0;\O}
=
\|\bB{\varsigma}\|_{\div_\varrho;\O}\,, \qquad
\sup_{\substack{
\tmass\in\bL^2(\O)\setminus\{\bB{0}\}
}}
\frac{ \bar b_2(\tmass,\bB{\varsigma}) }{ \|\tmass\|_{0;\O} }
\geq
\|\bB{\varsigma}\|_{0;\O}
=
\|\bB{\varsigma}\|_{\div_\varrho;\O}\,.
\end{align*}
Hence, \cite[Theorem 2.1]{Bernardi1988} implies the existence of $\alpha_1>0$, independent of $\varphi$, such that \eqref{eq:IS:a:mass} hold.
\end{proof}

We next establish the inf-sup condition for $b$ on the zero-mean
subspace $\bQ_{1,0}$.

\begin{lemma} \label{lemma:infsup:b:mass}
There exists a constant $\beta_1>0$, depending only on $\O$, $\rho$, and the corresponding trace and Neumann stability constants, such that
\begin{align}
\sup_{\substack{
\vsmass\in\bH_1\setminus\{\vec{\bB{0}}\}
}}
\frac{ b(\vsmass,\vec{\bB{\psi}}) }{ \|\vsmass\|_{\bH_1} }
\geq
\beta_1
\|\vec{\bB{\psi}}\|_{\bQ_1}
\qquad
\forall\,\vec{\bB{\psi}}\in\bQ_{1,0}\,.
\label{eq:IS:b:mass}
\end{align}
\end{lemma}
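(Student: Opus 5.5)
Since $b$ splits as $b(\vsmass,\vec{\bB{\psi}}) = \int_\O\psi\,\div(\bB{\varsigma}) + \langle\bB{\varsigma}\cdot\bB{\nu},\chi\rangle_\Gamma$ and depends only on $\bB{\varsigma}$, we may take $\bB{s}=\bB{0}$ and construct, for each $\vec{\bB{\psi}}=(\psi,\chi)\in\bQ_{1,0}$, one $\bB{\varsigma}$ that controls both components. The estimate for $\chi$ is handled first and separately, and the two are then combined. The natural tool is the constrained supremum: it suffices to prove, for some $C>0$, that
\begin{align*}
\sup_{\bB{\varsigma}\neq\bB{0}}\frac{b((\bB{0},\bB{\varsigma}),\vec{\bB{\psi}})}{\|\bB{\varsigma}\|_{\div_\varrho;\O}}\;\geq\; C\,\|\psi\|_{0,\rho;\O}
\qquad\text{and}\qquad
\sup_{\bB{\varsigma}\neq\bB{0}}\frac{b((\bB{0},\bB{\varsigma}),\vec{\bB{\psi}})}{\|\bB{\varsigma}\|_{\div_\varrho;\O}}\;\geq\; C\,\|\chi\|_{1/2,\Gamma}\,.
\end{align*}

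\textbf{Step 1 (the $\chi$ part).} Restrict the supremum to $\bB{\varsigma}$ with $\div(\bB{\varsigma})=0$, so that $b=\langle\bB{\varsigma}\cdot\bB{\nu},\chi\rangle_\Gamma$. Let $g\in\mathrm H^{-1/2}(\Gamma)$ be the Riesz representative of $\chi$. Modify it to $\tilde g := g - |\Gamma|^{-1}\langle g,1\rangle_\Gamma$ so that it has zero mean. Solve the Neumann problem $\Delta z=0$ in $\O$, $\nabla z\cdot\bB{\nu}=\tilde g$ on $\Gamma$, and set $\bB{\varsigma}=\nabla z$. This field is divergence-free and its norm is bounded by $C\|\tilde g\|_{-1/2,\Gamma}$ (this is the Neumann stability constant in the statement). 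The mean correction costs a term $\langle g,1\rangle_\Gamma\,\langle 1,\chi\rangle$, so some control of the constant part of $\chi$ is needed.

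I would try to fix this by avoiding the correction altogether. Use instead $\bB{\varsigma}=\nabla z$ with $-\Delta z + z = 0$ and $\partial_\nu z = g$, which is a Robin-free and mean-free construction. Now $\div\bB{\varsigma}=z\neq 0$, so the volume term $\int_\O\psi z$ appears. This is exactly where the combined argument is needed.

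\textbf{Step 2 (the $\psi$ part).} Since $\psi$ has zero mean, take $\bB{\varsigma}=\nabla z$ with $\Delta z = |\psi|^{\rho-2}\psi - m$ in $\O$ and $\partial_\nu z=0$ on $\Gamma$. Here $m$ is the mean of $|\psi|^{\rho-2}\psi$, and the source lies in $\L^\varrho$. Then $\langle\bB{\varsigma}\cdot\bB{\nu},\chi\rangle_\Gamma=0$, and because $\psi$ has zero mean, $\int_\O\psi\,\div\bB{\varsigma}=\|\psi\|_{0,\rho;\O}^\rho$. The bound $\|\nabla z\|_{0;\O}\leq C\|\Delta z\|_{0,\varrho;\O}$ follows by testing with $z$ and using $\mathrm H^1\hookrightarrow\L^\rho$ (equivalently, duality with $\varrho\ge 2d/(d+2)$). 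This gives $\|\bB{\varsigma}\|_{\div_\varrho;\O}\leq C\|\psi\|_{0,\rho;\O}^{\rho-1}$, and hence the first inequality.

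\textbf{Step 3 (combination).} With the $\psi$-bound in hand, the coupling term from Step 1 satisfies $|\int_\O\psi z|\le C\|\psi\|_{0,\rho;\O}\|\chi\|_{1/2,\Gamma}$. A convex combination of the two estimates then yields
$$\sup b\;\ge\; c_1\|\chi\|_{1/2,\Gamma} - c_2\|\psi\|_{0,\rho;\O}\qquad\text{and}\qquad \sup b\;\ge\; c_3\|\psi\|_{0,\rho;\O}\,.$$
This combination gives \eqref{eq:IS:b:mass} with $\beta_1$ depending on $\O$, $\rho$, $\|i_\rho\|$, and the trace and Neumann constants.

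The main obstacle is Step 1: obtaining the boundary inf-sup bound with $\div\bB{\varsigma}\in\L^\varrho$ while handling either the nonzero volume term or the mean-value compatibility. I would first try the $-\Delta z+z=0$ lifting route, checking that $z\in\mathrm H^1\subset\L^\varrho$ is bounded by $\|g\|_{-1/2,\Gamma}=\|\chi\|_{1/2,\Gamma}$.
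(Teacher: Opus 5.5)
Your proposal is correct. Step~2 is exactly the paper's construction for $\psi$, but for $\chi$ you take a genuinely different route.

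The paper picks $\eta\in\mathrm{H}^{-1/2}(\Gamma)$ with $\|\eta\|_{-1/2,\Gamma}=1$ and $\langle\eta,\chi\rangle_\Gamma\geq\frac12\|\chi\|_{1/2,\Gamma}$. It then solves the pure Neumann problem $\Delta z_\chi=|\Omega|^{-1}\langle\eta,1\rangle_\Gamma$ in $\Omega$, $\nabla z_\chi\cdot\bB{\nu}=\eta$ on $\Gamma$, $\int_\Omega z_\chi=0$. The constant source restores exactly the compatibility condition you ran into at the start of Step~1. Since $\div(\nabla z_\chi)$ is then constant, $\int_\Omega\psi\,\div(\nabla z_\chi)=0$ for every zero-mean $\psi$. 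So the cross term vanishes identically, the two lower bounds are decoupled, and the paper simply averages them.

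Your lifting $-\Delta z+z=0$, $\nabla z\cdot\bB{\nu}=g$ is coercive on all of $\mathrm{H}^1(\Omega)$ and needs no compatibility or normalization. With $g$ the Riesz representative for the quotient norm, $z$ is precisely the minimal-norm $\mathrm{H}^1$-extension of $\chi$. Hence $\|z\|_{1;\Omega}=\|\chi\|_{1/2,\Gamma}$ and $\langle g,\chi\rangle_\Gamma=\|\chi\|_{1/2,\Gamma}^2$, with no factor $\frac12$.

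The price is the non-constant divergence $z$. Its contribution satisfies $|\int_\Omega\psi z|\leq\|\psi\|_{0,\rho;\Omega}\|z\|_{0,\varrho;\Omega}\leq C\|\psi\|_{0,\rho;\Omega}\|\chi\|_{1/2,\Gamma}$ and must be absorbed as in Step~3. This works: $M\geq c_3\|\psi\|_{0,\rho;\Omega}$ and $M\geq c_1\|\chi\|_{1/2,\Gamma}-c_2\|\psi\|_{0,\rho;\Omega}$ give $\|\chi\|_{1/2,\Gamma}\leq c_1^{-1}(1+c_2/c_3)M$. The cost is a $\beta_1$ that also depends on $c_2/c_3$.

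One small point. You turn $b\geq\|\chi\|^2_{1/2,\Gamma}-C\|\psi\|_{0,\rho;\Omega}\|\chi\|_{1/2,\Gamma}$ into a bound on the ratio by dividing by the upper bound $\|\bB{\varsigma}\|_{\div_\varrho;\Omega}\leq C\|\chi\|_{1/2,\Gamma}$. That step is only legitimate when the numerator is nonnegative; the other case is trivial because the supremum is nonnegative.

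In short, the paper's route is slightly cleaner because it uses the zero mean of $\psi$ in both constructions. Yours uses it only in Step~2 and pays for that with the absorption argument.
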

\begin{proof}
The proof can be found in Appendix~\ref{appendix:proof:infsup:b:mass}.
\end{proof}

To show the global inf-sup condition for $\bB{A}_\varphi$, we proceed as follows. We first define the constant vector $\vec{\bB{e}} = (1,-1)\in\bQ_1$ and consider the decomposition $\bQ_1 = \bQ_{1,0}\oplus \text{span}\{\vec{\bB{e}}\}$, which implies that for all $\vec{\bB{\phi}},\vec{\bB{\psi}}\in\bQ_1$ there exist unique $\vec{\bB{\phi}}_0=(\phi_0,\xi),\vec{\bB{\psi}}_0=(\psi_0,\chi)\in \bQ_{1,0}$ such that
\begin{equation*}
\vec{\bB{\phi}} = \vec{\bB{\phi}}_0 + m_\phi\vec{\bB{e}}\,,\qquad \vec{\bB{\psi}} = \vec{\bB{\psi}}_0 + n_\psi\vec{\bB{e}}\qquad \text{with}\qquad
m_\phi :=
\frac{1}{|\O|} \int_\O \phi\,,\qquad n_\psi := \frac{1}{|\O|} \int_\O \psi\,.
\end{equation*}
Furthermore, directly from the definition of the bilinear forms $b$ and $c$ in \eqref{def:bforms:mass}, and the fact that $\phi_0$ and $\psi_0$ have zero mean, we have
\begin{align*}
b\big(\vsmass,\vec{\bB{e}}\big) &= 0\,,\qquad
c\big( \vec{\bB{\phi}}_0, \vec{\bB{e}} \big) = c\big( \vec{\bB{e}}, \vec{\bB{\psi}}_0 \big) = 0\,,
\qquad
c\big( \vec{\bB{e}}, \vec{\bB{e}} \big) = \delta_1 |\O|\,,
\end{align*}
for all $\vec{\bB{s}}\in \bH_1$. Combining these identities, we get
\begin{align}
&\bB{A}_\varphi\big((\vtmass,\vec{\bB{\phi}}), (\vsmass,\vec{\bB{\psi}}) \big)
=
\bB{A}_{\varphi,0} \big( (\vtmass,\vec{\bB{\phi}}_0), (\vsmass,\vec{\bB{\psi}}_0) \big) - \delta_1 |\O| m_\phi n_\psi\,, \label{eq:A:decomp:mass}
\end{align}
where bilinear form $\bB{A}_{\varphi,0}$ is defined as the restriction of $\bB{A}_{\varphi}$ to the subspace $(\bH_1\times\bQ_{1,0})^2$, which controls the zero-mean components, whereas $c$ controls the remaining constant direction. The inf-sup conditions established in Lemmas~\ref{lemma:infsup:a:mass} and \ref{lemma:infsup:b:mass}, together with the positive semi-definiteness of $a_\varphi$ and $c$, allow us to apply \cite[Theorem 3.2]{Gatica2023} to $\bB{A}_{\varphi,0}$. Hence, $\bB{A}_{\varphi,0}$ satisfies inf-sup conditions in both arguments with constant $\alpha_{\bB{A},0}>0$, depending only on $\alpha_1$, $\beta_1$, $\|a\|$, and $\|c\|$. Then, for a fixed pair $(\vtmass,\vec{\bB{\phi}}_0)\in\bH_1\times \bQ_{1,0}$ with $(\vtmass,\vec{\bB{\phi}}_0)\neq (\vec{\bB{0}},\vec{\bB{0}})$, thanks to the mentioned inf-sup condition of $\bB{A}_{\varphi,0}$, there exists $(\vsmass,\vec{\bB{\psi}}_0)\in \bH_1\times\bQ_{1,0}$ such that $\|(\vsmass,\vec{\bB{\psi}}_0)\| =
\|(\vtmass,\vec{\bB{\phi}}_0)\|$ and
\begin{align}
\dfrac{\bB{A}_{\varphi,0} \big( (\vtmass,\vec{\bB{\phi}}_0), (\vsmass,\vec{\bB{\psi}}_0) \big)}{ \|(\vsmass,\vec{\bB{\psi}}_0)\|}
\,=\,
\dfrac{\bB{A}_{\varphi,0} \big( (\vtmass,\vec{\bB{\phi}}_0), (\vsmass,\vec{\bB{\psi}}_0) \big)}{ \|(\vtmass,\vec{\bB{\phi}}_0)\|}
&\,\geq\,
\frac{\alpha_{\bB{A},0}}{2} \|(\vtmass,\vec{\bB{\phi}}_0)\|\,. \label{eq:restricted:test:estimate:mass}
\end{align}
Here, the factor $1/2$ accounts for the possibility that the supremum over the second argument in the left-most term in \eqref{eq:restricted:test:estimate:mass} is not attained. Note that for the zero case  $(\vtmass,\vec{\bB{\phi}}_0) = (\vec{\bB{0}},\vec{\bB{0}})$, we set $(\vsmass,\vec{\bB{\psi}}_0) = (\vec{\bB{0}},\vec{\bB{0}})$, so that \eqref{eq:restricted:test:estimate:mass} remain valid. Now, choosing the extended test function $\vec{\bB{\psi}}_{\rm ext} := \vec{\bB{\psi}}_0 - m_\phi\vec{\bB{e}}\in\bQ_1$ and using  \eqref{eq:restricted:test:estimate:mass}, we have
\begin{align}
\begin{aligned}
\hspace{-0.20cm}
\bB{A}_\varphi
\big( (\vtmass,\vec{\bB{\phi}}), (\vsmass,\vec{\bB{\psi}}_{\rm ext}) \big)
 = \bB{A}_{\varphi,0} \big( (\vtmass,\vec{\bB{\phi}}_0), (\vsmass,\vec{\bB{\psi}}_0) \big)
+ \delta_1 |\O| m_\phi^2   \geq
\frac{\alpha_{\bB{A},0}}{2} \|(\vtmass,\vec{\bB{\phi}}_0)\|^{2}
+
\delta_1 |\O| m_\phi^2\geq \widehat{\alpha}_{\bB{A}} R_\phi^2 , \label{eq:global:test:lower:mass}
\end{aligned}
\end{align}
where $\widehat{\alpha}_{\bB{A}} := \min \big\{ \tfrac{1}{2}\alpha_{\bB{A},0}, \delta_1 |\O| \big\}$ and
$R_\phi^2 := \|(\vtmass,\vec{\bB{\phi}}_0)\|^{2} + |m_\phi|^2$.
Combining \eqref{eq:global:test:lower:mass} with the boundedness of $\bB{A}_{\varphi}$, we can show that $(\vsmass,\vec{\bB{\psi}}_{\rm ext})$ is nonzero. On the other hand, directly from the definition of $\vec{\bB{\psi}}_{\rm ext}$ and linear decomposition of $\vec{\bB{\phi}}$, we obtain
\begin{align}
\|(\vsmass,\vec{\bB{\psi}}_{\rm ext})\|
& \,\leq\, \|(\vsmass,\vec{\bB{\psi}}_0)\| + |m_\phi| \|(\vec{\bB{0}},\vec{\bB{e}})\|
\,\leq\, C_{\rm ext} R_\phi\,, \label{eq:global:test:norm:mass}\\
\|(\vtmass,\vec{\bB{\phi}})\| & =  \|(\vtmass,\vec{\bB{\phi}}_0) + m_\phi (\vec{\bB{0}},\vec{\bB{e}})\|  \leq C_{\rm ext} R_\phi\,, \label{eq:global:trial:norm:mass}
\end{align}
where $C_{\rm ext}>0$ depends only on the norm of $\vec{\bB{e}}$.
Thus, combining \eqref{eq:global:test:lower:mass} and \eqref{eq:global:trial:norm:mass}, and since $(\vsmass,\vec{\bB{\psi}}_{\rm ext})$ is an admissible nonzero test function, we conclude the inf-sup condition of $\bB{A}_\varphi$ on the first argument
\begin{align}
\sup_{\substack{ (\vsmass,\vec{\bB{\psi}}) \in\bH_1\times\bQ_1\setminus\{(\vec{\bB{0}},\vec{\bB{0}})\}}}
\frac{ \bB{A}_\varphi \big( (\vtmass,\vec{\bB{\phi}}), (\vsmass,\vec{\bB{\psi}}) \big) }{
\|(\vsmass,\vec{\bB{\psi}})\| }
\geq
\frac{ \bB{A}_\varphi \big( (\vtmass,\vec{\bB{\phi}}), (\vsmass,\vec{\bB{\psi}}_{\rm ext}) \big) }{
\|(\vsmass,\vec{\bB{\psi}}_{\rm ext})\|}
&\geq
\frac{\widehat{\alpha}_{\bB{A}}}{C_{\rm ext}} R_\phi
\geq \frac{\widehat{\alpha}_{\bB{A}}}{C_{\rm ext}^2} \|(\vtmass,\vec{\bB{\phi}})\|\,.
\label{eq:GIS:mass:aux:1}
\end{align}
for all $(\vtmass,\vec{\bB{\phi}})\in\bH_1\times\bQ_1$, where $\alpha_{\bB{A}} := \widehat{\alpha}_{\bB{A}}/C_{\rm ext}^2 > 0$.
The inf-sup condition with respect to the second argument follows analogously to the derivation of \eqref{eq:GIS:mass:aux:1}, with $(\vsmass,\vec{\bB{\psi}})$ playing the role of $(\vtmass,\vec{\bB{\phi}})$.

Having the inf-sup conditions for $\bB{A}_{\varphi}$, we can now show the inf-sup condition for the global bilinear form $\bB{A}_{\varphi,\bB{w}}$ related to \eqref{syst:weak:mass}, that incorporates the remaining bilinear form $\widetilde{b}_{\bB{w}}$ for $\bB{w}\in\bL^r(\Omega)$, which is defined by $\bB{A}_{\varphi,\bB{w}}: (\bH_1\times\bQ_1)^2 \to\R$:
\begin{align}
&\bB{A}_{\varphi,\bB{w}} \big( (\vtmass,\vec{\bB{\phi}}), (\vsmass,\vec{\bB{\psi}}) \big)
:=
\bB{A}_\varphi \big( (\vtmass,\vec{\bB{\phi}}), (\vsmass,\vec{\bB{\psi}}) \big)
-
\widetilde b_{\bB{w}} \big( \vsmass,\vec{\bB{\phi}} \big)\,.\label{def:A:mass}
\end{align}
Then, using the boundedness of $\widetilde{b}_{\bB{w}}$ \eqref{eq:bounded:mass}, the inf-sup condition \eqref{eq:GIS:mass:aux:1} and following the same approach used, for instance, in \cite{Careaga2025a}, for $\|\bB{w}\|_{0,r;\O} \leq \alpha_{\bB{A}}/2$, we obtain the inf-sup condition for $\bB{A}_{\varphi,\bB{w}}$:
\begin{align}
&\sup_{\substack{(\vsmass,\vec{\bB{\psi}})\in\bH_1\times\bQ_1\setminus\{(\vec{\bB{0}},\vec{\bB{0}})\}}}
\frac{
\bB{A}_{\varphi,\bB{w}}\big( (\vtmass,\vec{\bB{\phi}}), (\vsmass,\vec{\bB{\psi}}) \big)}{
\|(\vsmass,\vec{\bB{\psi}})\|}
\geq
\Big(\alpha_{\bB{A}} - \|\bB{w}\|_{0,r;\O} \Big)
\|(\vtmass,\vec{\bB{\phi}})\|\geq
\frac{\alpha_{\bB{A}}}{2}\|(\vtmass,\vec{\bB{\phi}})\|,\label{eq:GIS:mass:1}
\end{align}
The inf-sup condition in the second argument can be obtained following the same reasoning. Thus, we are now in position to establish the well-posedness of the operator $\mathrm{S}_1$, equivalently the existence and uniqueness of solution to \eqref{syst:FP:weak:mass}.

\begin{lemma}
\label{lemma:solv:mass}
Let $r,\rho$ and their conjugate exponents $s,\varrho$ satisfy the conditions in \eqref{def:ranges}, together with $1/\rho+1/r=1/2$. Then, for each $(\varphi,\bB{w})\in\L^\rho(\O)\times\bL^r(\O)$ satisfying $\|\bB{w}\|_{0,r;\O} \leq \alpha_{\bB{A}}/2$, there exists a unique $(\vtmass,\vec{\bB{\phi}})\in\bH_1\times\bQ_1$ solving \eqref{syst:FP:weak:mass}. Consequently, the operator
$\mathrm S_1$ is well-defined by $\mathrm S_1(\varphi,\bB{w}):=\phi$. Moreover, there exists a constant $C_{\mathrm S_1}>0$, independent of $(\varphi,\bB{w})$, such that
\begin{align}
\|\mathrm S_1(\varphi,\bB{w})\|_{0,\rho;\O}
= \|\phi\|_{0,\rho;\O} &\leq \|\vtmass\|_{\bH_1} + \|\vec{\bB{\phi}}\|_{\bQ_1}
\leq C_{\mathrm S_1} \datacons\,. \label{eq:apriori:mass}
\end{align}
where $\datacons:=f_{\max} + \|l_1\|_{0;\O} + \|\Phi\|_{-1/2,\Gamma}$.
\end{lemma}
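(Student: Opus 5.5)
Since \eqref{syst:FP:weak:mass} is linear once $\varphi$ and $\bB{w}$ are frozen, I would write it as a single operator equation with the global form $\bB{A}_{\varphi,\bB{w}}$ from \eqref{def:A:mass}. The task is to find $(\vtmass,\vec{\bB{\phi}})\in\bH_1\times\bQ_1$ such that
\begin{equation*}
\bB{A}_{\varphi,\bB{w}}\big((\vtmass,\vec{\bB{\phi}}),(\vsmass,\vec{\bB{\psi}})\big) = \mathcal{F}_\varphi(\vsmass) + \mathcal{G}(\vec{\bB{\psi}})\qquad \forall\,(\vsmass,\vec{\bB{\psi}})\in\bH_1\times\bQ_1\,.
\end{equation*}
Equivalence with \eqref{syst:FP:weak:mass} is checked by testing separately with $(\vsmass,\vec{\bB{0}})$ and $(\vec{\bB{0}},\vec{\bB{\psi}})$. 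Note that in \eqref{syst:FP:weak:mass} the convective term $\widetilde b_{\bB{w}}(\vsmass,\vec{\bB{\phi}})$ appears only in the first row, exactly as in \eqref{def:A:mass}. The plan is to apply the Banach--Ne\v{c}as--Babu\v{s}ka theorem \cite[Theorem 2.6]{Ern2004}. This requires three ingredients: boundedness of $\bB{A}_{\varphi,\bB{w}}$, the inf-sup condition in the first argument, and non-degeneracy (the inf-sup condition in the second argument). The spaces $\bH_1,\bQ_1$ are reflexive Banach spaces because $\rho,\varrho\in(1,\infty)$ by \eqref{def:ranges}.

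For boundedness, I would use \eqref{eq:bounded:mass}. Together with $\|\bB{w}\|_{0,r;\O}\le\alpha_{\bB{A}}/2$, it gives
\begin{equation*}
|\bB{A}_{\varphi,\bB{w}}|\le \|a\|+2\|b\|+\|c\|+\alpha_{\bB{A}}/2\,,
\end{equation*}
uniformly in $\varphi$, since $\|a\|$ depends only on $D_1$. The first-argument inf-sup is exactly \eqref{eq:GIS:mass:1}, with constant $\alpha_{\bB{A}}/2$ independent of $\varphi$, because $\alpha_1,\beta_1$ and the constants of \cite[Theorem 3.2]{Gatica2023} are independent of $\varphi$.

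The second-argument condition is the step I expect to need the most care. The text only asserts it ``by the same reasoning'', and $\widetilde b_{\bB{w}}$ is not symmetric. I would first derive the transposed inf-sup for $\bB{A}_\varphi$. The decomposition \eqref{eq:A:decomp:mass} is symmetric in its structure, and $\bB{A}_{\varphi,0}$ satisfies both inf-sup conditions by \cite[Theorem 3.2]{Gatica2023}, so choosing a trial extension $\vec{\bB{\phi}}_0 - n_\psi\vec{\bB{e}}$ reproduces \eqref{eq:GIS:mass:aux:1} with the roles swapped. The perturbation then satisfies
\begin{equation*}
|\widetilde b_{\bB{w}}(\vsmass,\vec{\bB{\phi}})|\le\|\bB{w}\|_{0,r;\O}\,\|(\vtmass,\vec{\bB{\phi}})\|\,\|(\vsmass,\vec{\bB{\psi}})\|\,,
\end{equation*}
which loses at most $\alpha_{\bB{A}}/2$. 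Hence, for each nonzero $(\vsmass,\vec{\bB{\psi}})$, the supremum over trial functions is at least $\tfrac{\alpha_{\bB{A}}}{2}\|(\vsmass,\vec{\bB{\psi}})\|>0$. The theorem then yields a unique solution together with the stability bound
\begin{equation*}
\|(\vtmass,\vec{\bB{\phi}})\|\le \frac{2}{\alpha_{\bB{A}}}\big(\|\mathcal{F}_\varphi\|+\|\mathcal{G}\|\big)\,.
\end{equation*}

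Finally, for \eqref{eq:apriori:mass}: the first inequality holds because $\|\phi\|_{0,\rho;\O}\le\|\vec{\bB{\phi}}\|_{\bQ_1}$. For the second, I would use $\|\mathcal{F}\|=f_{\max}|\O|^{1/2}$ and $\|\mathcal{G}\|\le\max\{|\O|^{(\rho-2)/(2\rho)},1\}(\|l_1\|_{0;\O}+\|\Phi\|_{-1/2,\Gamma})$. This gives
\begin{equation*}
C_{\mathrm S_1}:=\frac{2}{\alpha_{\bB{A}}}\max\big\{|\O|^{1/2},|\O|^{(\rho-2)/(2\rho)},1\big\}\,,
\end{equation*}
which is independent of $(\varphi,\bB{w})$ because $f_{\rm bk}$ is bounded by $f_{\max}$ regardless of its argument. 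Well-definedness of $\mathrm S_1$ follows from uniqueness.
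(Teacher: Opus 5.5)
Your proposal is correct and follows essentially the same route as the paper: you recast \eqref{syst:FP:weak:mass} as a single problem for the global form $\bB{A}_{\varphi,\bB{w}}$ and invoke the Banach--Ne\v{c}as--Babu\v{s}ka theorem using boundedness and \eqref{eq:GIS:mass:1}, then bound $\mathcal F_\varphi$ and $\mathcal G$ via \eqref{eq:bounded:mass} in the resulting $2/\alpha_{\bB{A}}$ stability estimate. The only difference is that you spell out the second-argument inf-sup condition (the trial extension $\vec{\bB{\phi}}_0 - n_\psi\vec{\bB{e}}$ together with the perturbation bound for $\widetilde b_{\bB{w}}$) and an explicit $C_{\mathrm S_1}$, which the paper covers only by ``the same reasoning'' and ``absorbing the powers of $|\O|$''.
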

\begin{proof}
The inf-sup conditions for $\bB{A}_{\varphi,\bB{w}}$ deduced above, together with the boundedness of $\bB{A}_{\varphi,\bB{w}}$, allow us to apply the
Banach--Ne\v{c}as--Babu\v{s}ka theorem (cf.~\cite[Theorem 2.6]{Ern2004}). Hence, \eqref{syst:FP:weak:mass} admits a unique solution and
\begin{align*}
\|\vtmass\|_{\bH_1} + \|\vec{\bB{\phi}}\|_{\bQ_1}
&\leq \frac{2}{\alpha_{\bB{A}}}
\left( \|\mathcal F_\varphi\|_{\bH_1'} + \|\mathcal G\|_{\bQ_1'} \right).
\end{align*}
Using the bounds for $\mathcal F_\varphi$ and $\mathcal G$ given in \eqref{eq:bounded:mass}, and absorbing the powers of $|\O|$ into a
constant, we obtain \eqref{eq:apriori:mass}. The constant $C_{\mathrm S_1}$ depends only on $\O$, $\rho$, and
$\alpha_{\bB{A}}$, and also depends on the constants entering in the bounds of the
bilinear forms, but is independent of $(\varphi,\bB{w})$.
\end{proof}

\subsection{Well-posedness heat equation} \label{sec:CF:WP:heat}

To analyse the linearized weak formulation of the heat equation \eqref{syst:FP:weak:heat}, we follow the same reasoning used in the well-posedness analysis of \eqref{syst:FP:weak:mass} from the previous
section. We begin by assuming that $\varphi\in\mathrm{L}^\rho(\O)$ and $\bB{w}\in\bL^r(\O)$ are given functions and introduce the global bilinear form associated with the unperturbed saddle-point problem, $\mfk{A}_{\varphi}: (\bH_2\times\L^\rho(\O))^2\to\R$, defined by
\begin{equation*}
\begin{aligned}
\mfk{A}_{\varphi}
\big((\vtheat,\theta),(\vsheat,\varkappa)\big)
\,:=\,
\mfk{a}_{\varphi}(\vtheat,\vsheat) + \mfk{b}(\vsheat,\theta) + \mfk{b}(\vtheat,\varkappa) - \mfk{c}(\theta,\varkappa)\,,
\end{aligned}
\end{equation*}
for all $(\vtheat,\theta),(\vsheat,\varkappa)\in \bH_2\times\L^\rho(\O)$. From the definitions of $\mfk{a}_{\varphi}$ and $\mfk{c}$, together with the uniform lower bound of $\mathcal{D}_2$, we readily obtain
\begin{align*}
\begin{aligned}
\mfk{a}_{\varphi}(\vsheat,\vsheat) \,\geq\, d_2\|\bB{q}\|^2_{0;\O} \,\geq\,0\,, \qquad
\mfk{c}(\varkappa,\varkappa) \,=\, \delta_2\|\varkappa\|_{0;\O}^2 \,\geq\,0\,,
\end{aligned}
\end{align*}
for all $\vsheat\in\bH_2$ and $\varkappa\in\L^\rho(\O)$. In addition, $\mfk{c}$ is symmetric. Furthermore, the kernel induced by the bilinear form $\mfk{b}$ is given by $\widetilde{\mathcal{V}} :=\bL^2(\O)\times\widetilde{\mathcal{V}}_0$, where
\begin{align*}
\widetilde{\mathcal{V}}_0 \,:=\,
\Big\{ \widetilde{\bB{\tau}} \in\bH_{\Gamma_{\rm N}}(\div_\varrho;\O): \quad \div(\widetilde{\bB{\tau}}) = 0 \qin\O \Big\}\,.
\end{align*}
The next lemma provides the inf-sup conditions of
$\mfk{a}_{\varphi}$ on its two arguments.

\begin{lemma}\label{lemma:infsup:a:heat}
Let $\varphi\in\L^\rho(\O)$, with $\rho$ within the range of values stipulated in \eqref{def:ranges}. Then, there exists a constant $\alpha_2>0$, independent of $\varphi$, such that
\begin{alignat*}{2}
\sup_{\substack{ \vsheat_{*}\in\widetilde{\mathcal{V}} \setminus\{\vec{\bB{0}}\} }}
\frac{ \mfk{a}_{\varphi}(\vtheat,\vsheat_{*}) }{ \|\vsheat_{*}\|_{\bH_2} }
\,\geq\, \alpha_2\|\vtheat\|_{\bH_2}
\qquad \text{and}\qquad
\sup_{\substack{ \vtheat_{*}\in\widetilde{\mathcal{V}} \setminus\{\vec{\bB{0}}\} }}
\frac{ \mfk{a}_{\varphi}(\vtheat_{*},\vsheat) }{ \|\vtheat_{*}\|_{\bH_2} }
\,\geq\,
\alpha_2\|\vsheat\|_{\bH_2}\,,
\end{alignat*}
for all $ \vtheat,\vsheat\in\widetilde{\mathcal{V}}$.
\end{lemma}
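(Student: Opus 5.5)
The plan is to mirror the proof of Lemma~\ref{lemma:infsup:a:mass}, since $\mfk{a}_\varphi$ has exactly the same algebraic structure as $a_\varphi$, with $\mathcal{D}_2$ in place of $\mathcal{D}_1$ and $\widetilde{\mathcal{V}}_0$ in place of $\mathcal{V}_0$. We first write $\mfk{a}_\varphi$ as the global form of a nested saddle-point problem:
\begin{align*}
\mfk{a}_\varphi(\vtheat,\vsheat) = \bar{\mfk{a}}_\varphi(\theat,\bB{q}) + \bar{\mfk{b}}_1(\bB{q},\vsigheat) + \bar{\mfk{b}}_2(\theat,\widetilde{\bB{\tau}})\,.
\end{align*}
Here the forms are
\begin{align*}
\bar{\mfk{a}}_\varphi(\theat,\bB{q}) := \int_\O \mathcal{D}_2(\varphi)\theat\cdot\bB{q}\,,\qquad
\bar{\mfk{b}}_1(\bB{q},\widetilde{\bB{\tau}}) := -\int_\O \widetilde{\bB{\tau}}\cdot\bB{q}\,,\qquad
\bar{\mfk{b}}_2(\theat,\widetilde{\bB{\tau}}) := \int_\O \theat\cdot\widetilde{\bB{\tau}}\,.
\end{align*}
The inner space is $\bL^2(\O)$ and the multiplier space is $\widetilde{\mathcal{V}}_0$, endowed with the $\|\cdot\|_{\div_\varrho;\O}$ norm.

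Next we verify the hypotheses of \cite[Theorem 2.1]{Bernardi1988}. Boundedness of all three forms follows from Cauchy--Schwarz and $\mathcal{D}_2\le D_2$. Ellipticity of $\bar{\mfk{a}}_\varphi$ on $\bL^2(\O)$ holds with constant $d_2$ by \eqref{eq:data:bounds}, uniformly in $\varphi$. For the inf-sup conditions of $\bar{\mfk{b}}_1$ and $\bar{\mfk{b}}_2$, given $\widetilde{\bB{\tau}}\in\widetilde{\mathcal{V}}_0$ we take $\bB{q}=-\widetilde{\bB{\tau}}$ and $\theat=\widetilde{\bB{\tau}}$. This is admissible because $\widetilde{\bB{\tau}}\in\bL^2(\O)$. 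It gives
\begin{align*}
\sup_{\bB{q}\neq\bB{0}}\frac{\bar{\mfk{b}}_1(\bB{q},\widetilde{\bB{\tau}})}{\|\bB{q}\|_{0;\O}}\ \ge\ \|\widetilde{\bB{\tau}}\|_{0;\O}\,,
\end{align*}
and the analogous bound for $\bar{\mfk{b}}_2$. Since $\div(\widetilde{\bB{\tau}})=0$, we have $\|\widetilde{\bB{\tau}}\|_{0;\O}=\|\widetilde{\bB{\tau}}\|_{\div_\varrho;\O}$. Both inf-sup constants therefore equal $1$.

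Applying \cite[Theorem 2.1]{Bernardi1988} then yields both inf-sup conditions for $\mfk{a}_\varphi$ on $\widetilde{\mathcal{V}}$. The constant $\alpha_2$ depends only on $d_2$, $D_2$ and the unit inf-sup constants, and is therefore independent of $\varphi$.

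We do not expect a genuine obstacle. The one point to check is that the kernel $\widetilde{\mathcal{V}}_0$ correctly encodes the divergence-free condition, so that the $\div_\varrho$-norm collapses to the $\mathrm{L}^2$ norm. Unlike the concentration case, the boundary condition $\widetilde{\bB{\tau}}\cdot\bB{\nu}=0$ on $\Gamma_{\rm N}$ is already built into the space $\bH_{\Gamma_{\rm N}}(\div_\varrho;\O)$, and no trace multiplier appears. The argument is therefore slightly simpler than that of Lemma~\ref{lemma:infsup:a:mass}.
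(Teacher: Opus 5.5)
Your proposal is correct and matches the paper's argument. Both use the same nested saddle-point splitting of $\mfk{a}_\varphi$ and uniform $\mathbf{L}^2(\O)$-ellipticity from $\mathcal{D}_2\geq d_2$, then the test choices $\pm\widetilde{\bB{\tau}}$ for the two coupling forms (using $\|\widetilde{\bB{\tau}}\|_{\div_\varrho;\O}=\|\widetilde{\bB{\tau}}\|_{0;\O}$ on $\widetilde{\mathcal{V}}_0$), and conclude with \cite[Theorem 2.1]{Bernardi1988}.
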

\begin{proof}
The proof follows the same arguments as those employed in Lemma~\ref{lemma:infsup:a:mass}. Indeed, for every $\widetilde{\bB{\tau}}\in\widetilde{\mathcal{V}}_0$, there holds $\|\widetilde{\bB{\tau}}\|_{\div_\varrho;\O} = \|\widetilde{\bB{\tau}}\|_{0;\O}$, and hence the inf-sup conditions for the two coupling terms are obtained by choosing the corresponding functions in $\bL^2(\O)$ equal to $\widetilde{\bB{\tau}}$ and $-\widetilde{\bB{\tau}}$, respectively. The result then follows from the uniform ellipticity of the bilinear form induced by $\mathcal{D}_2(\varphi)$ and the same nested saddle-point argument used in the proof of Lemma~\ref{lemma:infsup:a:mass}.
\end{proof}

Next, in order to establish the required hypotheses of \cite[Theorem 3.2]{Gatica2023}, we prove the continuous inf-sup condition for $\mfk{b}$.

\begin{lemma}\label{lemma:infsup:b:heat}
There exists $\beta_2>0$, depending only on $\Omega$, $\Gamma_{\rm D}$, and $\rho$, such that
\begin{equation*}
\sup_{\substack{ \vsheat\in\bH_2\setminus\{\vec{\bB{0}}\} }}
\frac{ \mfk{b}(\vsheat,\varkappa) }{ \|\vsheat\|_{\bH_2} }
\,\geq\, \beta_2\|\varkappa\|_{0,\rho;\O} \qquad \forall\,\varkappa\in\L^\rho(\O)\,.
\end{equation*}
\end{lemma}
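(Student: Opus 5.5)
The plan is to construct, for a given $\varkappa\in\L^\rho(\O)$, an explicit test function $\vsheat=(\bB{0},\widetilde{\bB{\tau}})\in\bH_2$ with $\div(\widetilde{\bB{\tau}})$ equal to a suitable power of $\varkappa$, and to control $\|\widetilde{\bB{\tau}}\|_{\div_\varrho;\O}$ by $\|\varkappa\|_{0,\rho;\O}^{\rho-1}$. If $\varkappa=0$ there is nothing to prove. Otherwise set
$$
g := |\varkappa|^{\rho-2}\varkappa \in \L^\varrho(\O),\qquad \|g\|_{0,\varrho;\O} = \|\varkappa\|_{0,\rho;\O}^{\rho-1},\qquad \int_\O g\,\varkappa = \|\varkappa\|_{0,\rho;\O}^{\rho}.
$$
These identities hold because $\varrho(\rho-1)=\rho$.

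Next, take the auxiliary mixed boundary value problem: find $z\in \mathrm{H}^1_{\Gamma_{\rm D}}(\O):=\{v\in\mathrm{H}^1(\O):\ v=0 \text{ on }\Gamma_{\rm D}\}$ such that
$$
-\Delta z = -g \qin\O\,,\qquad z=0\qon\Gamma_{\rm D}\,,\qquad \nabla z\cdot\bB{\nu}=0\qon\Gamma_{\rm N}.
$$
The key point is that $g\in\L^\varrho(\O)$ with $\varrho\ge 2d/(d+2)$ by \eqref{def:ranges}. By the Sobolev embedding $\mathrm{H}^1(\O)\hookrightarrow\L^\rho(\O)$ (with $\rho$ the conjugate of $\varrho$), the map $v\mapsto\int_\O g v$ is therefore in the dual of $\mathrm{H}^1_{\Gamma_{\rm D}}(\O)$, with norm at most $\|i_\rho\|\,\|g\|_{0,\varrho;\O}$. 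Since $|\Gamma_{\rm D}|>0$, the Poincar\'e inequality makes the Dirichlet form elliptic on $\mathrm{H}^1_{\Gamma_{\rm D}}(\O)$. Lax--Milgram then yields a unique $z$ with $\|z\|_{1;\O}\le C_P\|i_\rho\|\,\|g\|_{0,\varrho;\O}$.

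Now define $\widetilde{\bB{\tau}}:=\nabla z\in\bL^2(\O)$. The weak formulation, tested with $v\in C_c^\infty(\O)$, gives $\div(\widetilde{\bB{\tau}})=g\in\L^\varrho(\O)$, so $\widetilde{\bB{\tau}}\in\bH(\div_\varrho;\O)$. Testing with general $v\in\mathrm{H}^1_{\Gamma_{\rm D}}(\O)$ and using \eqref{eq:IbyP:Hdivp:vector} gives $\langle\widetilde{\bB{\tau}}\cdot\bB{\nu},v\rangle_\Gamma=0$ for all such $v$. This means $\widetilde{\bB{\tau}}\cdot\bB{\nu}=0$ on $\Gamma_{\rm N}$, i.e.\ $\widetilde{\bB{\tau}}\in\bH_{\Gamma_{\rm N}}(\div_\varrho;\O)$.

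This step is the delicate one: one must check that the normal trace vanishes in the sense of $\mathrm{H}^{-1/2}$ restricted to $\Gamma_{\rm N}$, consistently with \eqref{def:restricted:duality}. I would handle it by noting that restrictions to $\Gamma_{\rm N}$ of traces of $\mathrm{H}^1_{\Gamma_{\rm D}}(\O)$ functions are exactly the admissible test functions on $\Gamma_{\rm N}$.

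Finally, collect the estimates. We have
$$
\|\widetilde{\bB{\tau}}\|_{\div_\varrho;\O}\le \|z\|_{1;\O}+\|g\|_{0,\varrho;\O}\le (1+C_P\|i_\rho\|)\,\|\varkappa\|_{0,\rho;\O}^{\rho-1},
$$
and therefore
$$
\sup_{\vsheat\neq\vec{\bB{0}}}\frac{\mfk{b}(\vsheat,\varkappa)}{\|\vsheat\|_{\bH_2}}\ \ge\ \frac{\int_\O g\,\varkappa}{\|\widetilde{\bB{\tau}}\|_{\div_\varrho;\O}}\ \ge\ \frac{\|\varkappa\|_{0,\rho;\O}^{\rho}}{(1+C_P\|i_\rho\|)\|\varkappa\|_{0,\rho;\O}^{\rho-1}}.
$$
This gives the claim with $\beta_2:=(1+C_P\|i_\rho\|)^{-1}$, which depends only on $\O$, $\Gamma_{\rm D}$ (through $C_P$) and $\rho$. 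In contrast with Lemma~\ref{lemma:infsup:b:mass}, the Dirichlet part $\Gamma_{\rm D}$ removes the need to restrict to zero-mean functions, so no splitting off of constants is required.
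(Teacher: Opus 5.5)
Your proposal is correct and follows essentially the paper's proof. Your test function $\widetilde{\boldsymbol{\tau}}=\nabla z$ coincides with the paper's $\widetilde{\boldsymbol{\varsigma}}=-\nabla z$ (the paper solves $-\Delta z=|\varkappa|^{\rho-2}\varkappa$ with $z=0$ on $\Gamma_{\rm D}$ and zero Neumann data on $\Gamma_{\rm N}$, so only the sign of $z$ differs), the paper likewise bounds $\|z\|_{1;\Omega}$ via the Friedrichs--Poincar\'e inequality and $i_\rho$, and it obtains the same constant $\beta_2=(1+\|i_\rho\|/c_{\rm p})^{-1}$. Your explicit check that the normal trace vanishes weakly on $\Gamma_{\rm N}$, using the integration-by-parts formula against $\mathrm{H}^1_{\Gamma_{\rm D}}(\Omega)$, spells out a step the paper only asserts.
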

\begin{proof}
The proof can be found in Appendix~\ref{appendix:proof:infsup:b:heat}.
\end{proof}

The positive semidefiniteness of $\mfk{a}_{\varphi}$ and $\mfk{c}$, the symmetry of $\mfk{c}$, and Lemmas~\ref{lemma:infsup:a:heat} and \ref{lemma:infsup:b:heat} allow us to apply \cite[Theorem 3.2]{Gatica2023}. Consequently, there exists $\alpha_{\mfk{A}}>0$, independent of $\varphi$, such that $\mfk{A}_{\varphi}$ satisfies two global inf-sup conditions (one for each argument) on $\bH_2\times\L^\rho(\O)$. The constant $\alpha_{\mfk{A}}$ depends only on $\alpha_2$, $\beta_2$, and the continuity constants of the bilinear forms involved. We next define the perturbed global bilinear form by
\begin{equation}\label{def:A:heat}
\begin{aligned}
\mfk{A}_{\varphi,\bB{w}}
\big((\vtheat,\theta),(\vsheat,\varkappa)\big)
\,:=\,
\mfk{A}_{\varphi}\big((\vtheat,\theta),(\vsheat,\varkappa)\big) - \mfk{\widetilde{b}}_{\bB{w}}(\vsheat,\theta).
\end{aligned}
\end{equation}
Similarly to the derivation of the inf-sup conditions for $\bB{A}_{\varphi,\bB{w}}$ in Section~\ref{sec:CF:WP:concentration}, using the inf-sup conditions for $\mfk{A}_{\varphi}$ and boundedness of $\mfk{\widetilde{b}}_{\bB{w}}$ in \eqref{eq:bounded:heat} with $\|\bB{w}\|_{0,r;\O} \leq \alpha_{\mfk{A}}/2$, we obtain the global inf-sup conditions for $\mfk{A}_{\varphi,\bB{w}}$.
The well-definedness of the operator $\mathrm{S}_2$ and the existence and uniqueness of solutions of \eqref{syst:FP:weak:heat} are provided in the following lemma.

\begin{lemma}\label{lemma:solv:heat}
Let $(\rho,\varrho)$ be a pair of conjugate exponents within the range of values in \eqref{def:ranges}. Then, for each $(\varphi,\bB{w})\in\mathrm{L}^{\rho}(\O)\times\bL^r(\O)$ satisfying $\|\bB{w}\|_{0,r;\O}\leq\alpha_{\mfk{A}}/2$, there exists a unique $(\vtheat,\theta)\in\bH_2\times\mathrm{L}^{\rho}(\O)$ solution to \eqref{syst:FP:weak:heat}. Hence, the operator $\mathrm{S}_2$ is well defined by $\mathrm{S}_2(\varphi,\bB{w})=\theta$. Moreover, there exists a positive constant $C_{\mathrm{S}_2}$, independent of $(\varphi,\bB{w})$, such that
\begin{align}\label{eq:apriori:heat}
\|\mathrm{S}_2(\varphi,\bB{w})\|
\,=\, \|\theta\|_{0,\rho;\O}
\,\leq\, \|\vtheat\|_{\bH_2} +  \|\theta\|_{0,\rho;\O} \,\leq\,
C_{\mathrm{S}_2}\dataheat\,.
\end{align}
where $\dataheat:= \|l_2\|_{0;\O} + \|\theta_{\rm D}\|_{1/2,\Gamma_{\rm D}}$.
\end{lemma}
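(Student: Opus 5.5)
The plan mirrors the proof of Lemma~\ref{lemma:solv:mass}. First, since $\mfk{a}_\varphi$ and $\mfk{c}$ are positive semi-definite, $\mfk{c}$ is symmetric, and Lemmas~\ref{lemma:infsup:a:heat} and \ref{lemma:infsup:b:heat} hold, \cite[Theorem 3.2]{Gatica2023} yields global inf-sup conditions for $\mfk{A}_\varphi$ in both arguments on $\bH_2\times\L^\rho(\O)$, with a constant $\alpha_{\mfk{A}}$ independent of $\varphi$. Unlike the concentration case, no zero-mean decomposition is needed, because Lemma~\ref{lemma:infsup:b:heat} already holds on all of $\L^\rho(\O)$: the Dirichlet part $\Gamma_{\rm D}$ removes the constant mode.

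Next I treat the convective term as a perturbation. For any $(\vtheat,\theta)$, the inf-sup condition of $\mfk{A}_\varphi$ and the bound $|\mfk{\widetilde b}_{\bB{w}}(\vsheat,\theta)|\le \|\bB{w}\|_{0,r;\O}\|\vsheat\|_{\bH_2}\|\theta\|_{0,\rho;\O}$ from \eqref{eq:bounded:heat} (where $\|\mfk{\widetilde b}\|=1$) give
\begin{equation*}
\sup_{(\vsheat,\varkappa)\neq 0}\frac{\mfk{A}_{\varphi,\bB{w}}((\vtheat,\theta),(\vsheat,\varkappa))}{\|(\vsheat,\varkappa)\|}\ge \big(\alpha_{\mfk{A}}-\|\bB{w}\|_{0,r;\O}\big)\|(\vtheat,\theta)\|\ge \frac{\alpha_{\mfk{A}}}{2}\|(\vtheat,\theta)\|.
\end{equation*}
The inf-sup condition in the second argument follows in the same way, by swapping the roles of trial and test pairs. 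Boundedness of $\mfk{A}_{\varphi,\bB{w}}$ follows from \eqref{eq:bounded:heat}.

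Then the Banach--Ne\v{c}as--Babu\v{s}ka theorem \cite[Theorem 2.6]{Ern2004} gives existence and uniqueness of $(\vtheat,\theta)$ solving \eqref{syst:FP:weak:heat}, together with
\begin{equation*}
\|\vtheat\|_{\bH_2}+\|\theta\|_{0,\rho;\O}\le \frac{2}{\alpha_{\mfk{A}}}\big(\|\mfk{F}\|+\|\mfk{G}\|\big).
\end{equation*}
I then insert $\|\mfk{F}\|\le \max\{1,\|i_\rho\|\}\|\theta_{\rm D}\|_{1/2,\Gamma_{\rm D}}$, which comes from the restricted duality bound with constant $C_{\Gamma_{\rm D}}$, and $\|\mfk{G}\|=|\O|^{(\rho-2)/(2\rho)}\|l_2\|_{0;\O}$. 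Absorbing these constants into $C_{\mathrm{S}_2}$ gives \eqref{eq:apriori:heat}, with a constant independent of $(\varphi,\bB{w})$. This independence holds because $\alpha_{\mfk{A}}$ depends only on $d_2$, $D_2$, $\beta_2$ and $\delta_2$, and not on $\varphi$.

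The only delicate point is making sure the constant from \cite[Theorem 3.2]{Gatica2023} is uniform in $\varphi$. This requires that $\alpha_2$ and $\|\mfk{a}\|=\max\{1,D_2\}$ are $\varphi$-independent, which Lemma~\ref{lemma:infsup:a:heat} and the bounds \eqref{eq:data:bounds} guarantee.
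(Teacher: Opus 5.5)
Your proposal is correct and follows the paper's route. You obtain global inf-sup conditions for $\mfk{A}_\varphi$ from \cite[Theorem 3.2]{Gatica2023} using Lemmas~\ref{lemma:infsup:a:heat}--\ref{lemma:infsup:b:heat}, absorb $\mfk{\widetilde b}_{\bB w}$ as a perturbation under $\|\bB w\|_{0,r;\O}\le\alpha_{\mfk A}/2$, and then apply the Banach--Ne\v{c}as--Babu\v{s}ka theorem with the bounds on $\mfk F$ and $\mfk G$; your remark that no zero-mean splitting is needed (since Lemma~\ref{lemma:infsup:b:heat} holds on all of $\L^\rho(\O)$) matches the paper as well.
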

\begin{proof}
The boundedness of $\mfk{A}_{\varphi,\bB{w}}$, together with the global inf-sup conditions related to the first and second argument of $\mfk{A}_{\varphi,\bB{w}}$ allow us to apply \cite[Theorem 2.6]{Ern2004}. Hence,
\eqref{syst:FP:weak:heat} admits a unique solution $(\vtheat,\theta)\in\bH_2\times\mathrm{L}^{\rho}(\O)$.
Furthermore, the estimate provided by \cite[Theorem 2.6, eqn.~(2.5)]{Ern2004}, together with the bounds for
$\mfk{F}$ and $\mfk{G}$, yields \eqref{eq:apriori:heat}. The constant $C_{\mathrm{S}_2}$ depends only on $\alpha_{\mfk{A}}$ and the constants entering the continuity bounds for $\mfk{F}$ and $\mfk{G}$; in particular, it is independent of $(\varphi,\bB{w})$.
\end{proof}

\subsection{Well-posedness flow equations} \label{sec:CF:WP:flow}

We now establish the well-posedness of the linearized weak formulation of the flow equations \eqref{syst:FP:weak:flow}, which is written as a perturbed saddle point problem with bilinear forms $\mbf{a}_{\varphi,\vartheta}\,$, $\mbf{b}$ and $\mbf{c}_{\varphi}$ (cf.~\eqref{def:bforms:flow}) for given functions $\varphi,\vartheta\in \L^\rho(\O)$. The analysis follows similarly as the one performed for the concentration and heat problems. As in the previous sections, we define the global bilinear form of \eqref{syst:FP:weak:flow} as
\begin{align}\label{def:A:flow}
 \mbb{A}_{\varphi,\vartheta} \big((\vec{\bB{\zeta}},\vec{\bB{u}}),(\vec{\bB{\eta}},\vec{\bB{w}})\big):=
\mbf{a}_{\varphi,\vartheta}(\vec{\bB{\zeta}},\vec{\bB{\eta}}) + \mbf{b}(\vec{\bB{\eta}},\vec{\bB{u}}) +
\mbf{b}(\vec{\bB{\zeta}},\vec{\bB{w}}) - \mbf{c}_\varphi(\vec{\bB{u}},\vec{\bB{w}})\,,
\end{align}
for all $(\vec{\bB{\zeta}},\vec{\bB{u}}), (\vec{\bB{\eta}},\vec{\bB{w}})\in \mX\times \mbb{M}$.
Furthermore, we observe that both $\mbf{a}_{\varphi,\vartheta}$ and $\mbf{c}_{\varphi}$ are positive semidefinite, that is
\begin{alignat*}{2}
\mbf{a}_{\varphi,\vartheta} (\vec{\bB{\zeta}},\vec{\bB{\zeta}}) &\,\geq\, \mu_{\min}\|\bB{\zeta}\|_{0;\O}^2 \,\geq\,0\,,\qquad
\mbf{c}_{\varphi}(\vec{\bB{w}},\vec{\bB{w}}) &\,\geq\, (\delta_3+\delta_4\kappa_{\min}) \|\bB{w}\|_{0;\O}^2 \,\geq\,0\,,
\end{alignat*}
for all $\vec{\bB{\zeta}}\in \mX$ and $\vec{\bB{w}}\in\mbb{M}$, and $\mbf{c}_{\varphi}$ is symmetric. The kernel of $\mbf{b}$ is given by
$\mathbcal{V}=\Ltwotr\times\mathbcal{V}_0$, where
\begin{align*}
\mathbcal{V}_0 := \Big\{ \bB{\tau}\in\Hdivtr: \quad \bdiv(\bB{\tau})=0 \qin\O \qan \bB{\tau}=\bB{\tau}^{\tt t} \Big\}\,.
\end{align*}
The following lemma states the inf-sup conditions for the bilinear form $\mbf{a}_{\varphi,\vartheta}$.
\begin{lemma}\label{lemma:infsup:a:flow}
Let $\varphi,\vartheta\in\L^\rho(\O)$, with $\rho$ within the range of values stipulated in \eqref{def:ranges}. Then, there exists a constant $\alpha_3>0$, independent of $\varphi$ and $\vartheta$, such that
\begin{align}
\sup_{\substack{\vec{\bB{\zeta}}_{*}\in\mathbcal{V}
\setminus\{\vec{\bB{0}}\}}} \frac{ \mbf{a}_{\varphi,\vartheta}(\vec{\bB{\zeta}}_{*},\vec{\bB{\eta}})}{
\|\vec{\bB{\zeta}}_{*}\|_{\mX}}
\geq
\alpha_3\|\vec{\bB{\eta}}\|_{\mX}
\qquad\text{and}\qquad
\sup_{\substack{ \vec{\bB{\eta}}_{*}\in\mathbcal{V}
\setminus\{\vec{\bB{0}}\} }}
\frac{ \mbf{a}_{\varphi,\vartheta}
(\vec{\bB{\zeta}},\vec{\bB{\eta}}_{*})}{ \|\vec{\bB{\eta}}_{*}\|_{\mX}}
\geq
\alpha_3\|\vec{\bB{\zeta}}\|_{\mX}\,, \label{eq:IS:a:flow}
\end{align}
for all $\vec{\bB{\eta}},\vec{\bB{\zeta}}\in\mathbcal{V}$.
\end{lemma}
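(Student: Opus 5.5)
The plan is to argue exactly as in Lemma~\ref{lemma:infsup:a:mass}: regard $\mbf{a}_{\varphi,\vartheta}$, restricted to the kernel $\mathbcal{V}=\Ltwotr\times\mathbcal{V}_0$, as the global form of a nested saddle-point problem, and invoke \cite[Theorem 2.1]{Bernardi1988}. Write
\[
\mbf{a}_{\varphi,\vartheta}(\vec{\bB{\zeta}},\vec{\bB{\eta}}) = \bar{\mbf{a}}_{\varphi,\vartheta}(\bB{\zeta},\bB{\eta}) + \bar{\mbf{b}}_1(\bB{\eta},\bB{\sigma}) + \bar{\mbf{b}}_2(\bB{\zeta},\bB{\tau}),
\]
with $\bar{\mbf{a}}_{\varphi,\vartheta}(\bB{\zeta},\bB{\eta}):=\int_\O\mu(\varphi,\vartheta)\bB{\zeta}:\bB{\eta}$ on $\Ltwotr\times\Ltwotr$, $\bar{\mbf{b}}_1(\bB{\eta},\bB{\sigma}):=-\int_\O\bB{\sigma}^{\rmd}:\bB{\eta}$ and $\bar{\mbf{b}}_2(\bB{\zeta},\bB{\tau}):=\int_\O\bB{\zeta}:\bB{\tau}^{\rmd}$ on $\Ltwotr\times\mathbcal{V}_0$. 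Since $\mu\ge\mu_{\min}>0$ uniformly, $\bar{\mbf{a}}_{\varphi,\vartheta}$ is $\Ltwotr$-elliptic with constant $\mu_{\min}$ and bounded by $\mu_{\max}$, independently of $(\varphi,\vartheta)$. (Boundedness of $\mu$ by $\mu_{\min},\mu_{\max}$ is already used earlier in the paper.)

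For the inf-sup conditions of $\bar{\mbf{b}}_1,\bar{\mbf{b}}_2$, I would first observe that for $\bB{\tau}\in\mathbcal{V}_0$ we have $\bdiv(\bB{\tau})=0$, so $\|\bB{\tau}\|_{\bdiv_s;\O}=\|\bB{\tau}\|_{0;\O}$. Then take $\bB{\eta}=-\bB{\tau}^{\rmd}$ (respectively $\bB{\zeta}=\bB{\tau}^{\rmd}$), which lies in $\Ltwotr$. This gives
\[
\sup_{\bB{\eta}\in\Ltwotr\setminus\{\bB{0}\}}\frac{\bar{\mbf{b}}_1(\bB{\eta},\bB{\tau})}{\|\bB{\eta}\|_{0;\O}}\ \ge\ \|\bB{\tau}^{\rmd}\|_{0;\O},
\]
and the analogous bound for $\bar{\mbf{b}}_2$. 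It remains to bound $\|\bB{\tau}^{\rmd}\|_{0;\O}$ from below by $\|\bB{\tau}\|_{0;\O}$.

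That lower bound is the main obstacle. I would invoke the standard estimate for $\mbb{H}_0(\bdiv_s;\O)$: there is $c_\O>0$ such that
\[
c_\O\|\bB{\tau}\|^2_{0;\O}\le\|\bB{\tau}^{\rmd}\|^2_{0;\O}+\|\bdiv(\bB{\tau})\|^2_{0,s;\O}\qquad\forall\,\bB{\tau}\in\Hdivtr.
\]
This is the Banach-space version of the classical result in \cite{Gatica2014}, available for $s$ in the range \eqref{def:ranges}; cf.~\cite{Gatica2023,Caucao2023}. Its proof uses the zero-mean trace condition together with a Ne\v{c}as/inf-sup argument for the divergence in the dual pairing $\L^s$--$\L^r$, and in the Banach setting this is where the restriction on $s$ enters. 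On $\mathbcal{V}_0$ the divergence term vanishes, so $\|\bB{\tau}^{\rmd}\|_{0;\O}\ge c_\O^{1/2}\|\bB{\tau}\|_{\bdiv_s;\O}$. This yields inf-sup constants for $\bar{\mbf{b}}_1,\bar{\mbf{b}}_2$ that depend only on $\O$.

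Finally, \cite[Theorem 2.1]{Bernardi1988}, applied with the ellipticity constant $\mu_{\min}$, the boundedness constant $\mu_{\max}$ and the inf-sup constant $c_\O^{1/2}$, gives both inequalities in \eqref{eq:IS:a:flow} on $\mathbcal{V}$. The resulting $\alpha_3$ depends only on $\mu_{\min},\mu_{\max}$ and $c_\O$, and is therefore independent of $\varphi$ and $\vartheta$.
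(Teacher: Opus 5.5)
Your proposal is correct and follows the paper's proof essentially step by step. You use the same splitting of $\mbf{a}_{\varphi,\vartheta}$ into the uniformly elliptic $\mu$-term and the two deviatoric coupling forms, and the same test choices $\pm\bB{\tau}^{\rmd}$. You also rely on the same key estimate $\|\bB{\tau}\|_{0;\O}\lesssim\|\bB{\tau}^{\rmd}\|_{0;\O}+\|\bdiv(\bB{\tau})\|_{0,s;\O}$ on $\Hdivtr$, which reduces to $\|\bB{\tau}\|_{\bdiv_s;\O}\lesssim\|\bB{\tau}^{\rmd}\|_{0;\O}$ on $\mathbcal{V}_0$, and then apply the same nested saddle-point theorem of Bernardi et al.; the differences (you label the two coupling forms the other way round and state the estimate in squared form) are only cosmetic.
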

\begin{proof}
We first observe that
$ \mbf{a}_{\varphi,\vartheta} (\vec{\bB{\zeta}},\vec{\bB{\eta}}) = \mbf{\bar{a}}_{\varphi,\vartheta} (\bB{\zeta},\bB{\eta})
+ \mbf{\bar{b}}_1(\bB{\zeta},\bB{\tau})
+ \mbf{\bar{b}}_2(\bB{\eta},\bB{\sigma})$,
where
\begin{align*}
\mbf{\bar{a}}_{\varphi,\vartheta} (\bB{\zeta},\bB{\eta}) := \int_\O \mu(\varphi,\vartheta) \bB{\zeta}:\bB{\eta}\,,\qquad
\mbf{\bar{b}}_1(\bB{\zeta},\bB{\tau})  := \int_\O \bB{\tau}^{\rmd}:\bB{\zeta}\,, \qquad
\mbf{\bar{b}}_2(\bB{\eta},\bB{\sigma}) := -\int_\O \bB{\sigma}^{\rmd}:\bB{\eta}\,.
\end{align*}
The uniform lower bound for $\mu$ implies that $\mbf{\bar{a}}_{\varphi,\vartheta}$ is elliptic on $\Ltwotr$, independently of $\varphi$ and $\vartheta$. On the other hand, the following inequality ensures the existence of $C_{\rmd}>0$, depending only on $\Omega$, such that
$$
\|\bB{\tau}\|_{0;\O} \leq C_{\rmd} \big( \|\bB{\tau}^{\rmd}\|_{0;\O} + \|\bdiv(\bB{\tau})\|_{0,s;\O} \big)
\qquad \forall\,\bB{\tau}\in\Hdivtr\,.
$$
Consequently, for every $\bB{\tau}\in\mathbcal{V}_0$, there holds $\|\bB{\tau}\|_{\bdiv_s;\O} = \|\bB{\tau}\|_{0;\O} \leq C_{\rmd}\|\bB{\tau}^{\rmd}\|_{0;\O}$. Then, given $\bB{\tau},\bB{\sigma}\in \mathbcal{V}_0$, where $\bB{\tau}^{\rmd},\bB{\sigma}^{\rmd}\in\Ltwotr$, and respectively choosing $\bB{\zeta}=\bB{\tau}^{\rmd}$ for $\mbf{\bar{b}}_1$ and $\bB{\eta}=-\bB{\sigma}^{\rmd}$ for $\mbf{\bar{b}}_2$, we have
\begin{align*}
\sup_{\substack{\bB{\zeta}\in\Ltwotr\setminus\{\bB{0}\}}}
\frac{\mbf{\bar{b}}_1(\bB{\zeta},\bB{\tau})}{\|\bB{\zeta}\|_{0;\O}}
\geq \|\bB{\tau}^{\rmd}\|_{0;\O}
\geq C_{\rmd}^{-1} \|\bB{\tau}\|_{\bdiv_s;\O}\,,
\qquad
\sup_{\substack{\bB{\eta}\in\Ltwotr\setminus\{\bB{0}\}}}\frac{\mbf{\bar{b}}_2(\bB{\eta},\bB{\sigma})}{\|\bB{\eta}\|_{0;\O}}
&\geq C_{\rmd}^{-1} \|\bB{\sigma}\|_{\bdiv_s;\O}\,.
\end{align*}
The uniform ellipticity of $\mbf{\bar{a}}_{\varphi,\vartheta}$ and the preceding inf-sup conditions allow us to apply the same nested saddle-point argument used in Lemma~\ref{lemma:infsup:a:mass}, which proves \eqref{eq:IS:a:flow}.
\end{proof}
The inf-sup condition for $\mbf{b}$ is stated next.
\begin{lemma}\label{lemma:infsup:b:flow}
There exists $\beta_3>0$, depending only on $\Omega$ and $r$, such
that
\begin{equation}\label{eq:IS:b:flow}
\sup_{\substack{
\vec{\bB{\eta}}\in\mX\setminus\{\vec{\bB{0}}\}
}}
\frac{
\mbf{b}(\vec{\bB{\eta}},\vec{\bB{w}})
}{
\|\vec{\bB{\eta}}\|_{\mX}
}
\geq
\beta_3\|\vec{\bB{w}}\|_{\mbb{M}}
\qquad
\forall\,\vec{\bB{w}}\in\mbb{M}\,.
\end{equation}
\end{lemma}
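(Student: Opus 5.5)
The strategy is to construct, for each given $\vec{\bB{w}}=(\bB{w},\widetilde{\bB{\gamma}})\in\mbb{M}$, an explicit test tensor $\bB{\tau}\in\Hdivtr$ (taking $\bB{\eta}=\bB{0}$) for which $\mbf{b}(\vec{\bB{\eta}},\vec{\bB{w}})$ bounds $\|\bB{w}\|_{0,r;\O}+\|\widetilde{\bB{\gamma}}\|_{0;\O}$ from below while $\|\bB{\tau}\|_{\bdiv_s;\O}$ stays controlled. As is standard, we split the supremum into two separate controls and then combine them. The first is a pure divergence inf-sup,
\begin{equation*}
\sup_{\bB{\tau}\in\Hdivtr\setminus\{\bB{0}\}}\frac{\int_\O\bB{w}\cdot\bdiv(\bB{\tau})}{\|\bB{\tau}\|_{\bdiv_s;\O}}\geq \beta_{\rm div}\|\bB{w}\|_{0,r;\O}\,.
\end{equation*}
The second is control of the skew part through the full bilinear form, restricted to divergence-free symmetric-breaking tensors.

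For the divergence part, given $\bB{w}\in\bL^r(\O)$ we set $\bB{z}:=|\bB{w}|^{r-2}\bB{w}\in\bL^s(\O)$, so that $\int_\O\bB{w}\cdot\bB{z}=\|\bB{w}\|_{0,r;\O}^r$ and $\|\bB{z}\|_{0,s;\O}=\|\bB{w}\|_{0,r;\O}^{r-1}$. We then solve the vector Poisson problem $\Delta\bB{v}=\bB{z}$ in a ball $B\supset\O$ (or in $\O$ with homogeneous Dirichlet data), extending $\bB{z}$ by zero. Elliptic regularity in $\mathrm{W}^{2,s}$, together with the Sobolev embedding $\mathrm{W}^{1,s}\hookrightarrow\mathrm{L}^2$, gives $\|\nabla\bB{v}\|_{0;\O}\leq C\|\bB{z}\|_{0,s;\O}$. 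This embedding is valid precisely because $s\geq 2d/(d+2)$, which follows from \eqref{def:ranges}. Working on the ball avoids regularity issues on polyhedral $\O$. Next we set $\bB{\tau}:=\nabla\bB{v}-\frac{1}{d|\O|}\big(\int_\O\tr\nabla\bB{v}\big)\I$, which lies in $\Hdivtr$, satisfies $\bdiv\bB{\tau}=\bB{z}$, and obeys $\|\bB{\tau}\|_{\bdiv_s;\O}\leq C\|\bB{z}\|_{0,s;\O}$. Dividing the resulting pairing by this norm yields the divergence inf-sup with constant $1/C$.

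For the skew part, given $\widetilde{\bB{\gamma}}\in\mbb{L}^2_{\tt skew}(\O)$, we will use the classical result for weakly symmetric mixed elasticity (Arnold--Falk--Winther, and the Banach variant in \cite{Gatica2023}): there exists $\bB{\tau}\in\Hdivtr$ with $\bdiv\bB{\tau}=\bB{0}$ such that $\int_\O\bB{\tau}:\widetilde{\bB{\gamma}}\geq c\|\widetilde{\bB{\gamma}}\|_{0;\O}\|\bB{\tau}\|_{0;\O}$. To construct it, we first solve a Stokes-type problem to get $\bB{v}\in\mbf{H}^1_0(\O)$ with $\nabla\bB{v}$ having skew part $\widetilde{\bB{\gamma}}$ up to gradient corrections. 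We then take $\bB{\tau}=\mathbf{curl}$ of a suitable potential so that the divergence vanishes. The trace correction is harmless because skew tensors are trace-free; note that $\I:\widetilde{\bB{\gamma}}=0$.

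To combine the two controls, we follow the standard argument. We first choose $\bB{\tau}_1$ from the skew step, and then correct the $\bB{w}$ contribution using $\bB{\tau}_2$ from the divergence step; this correction does not spoil the skew pairing beyond a term bounded by $C\|\bB{\tau}_2\|_{0}\|\widetilde{\bB{\gamma}}\|_0$. A weighted sum, or the equivalent abstract lemma stating that a divergence inf-sup combined with an inf-sup on the kernel of the divergence yields the full inf-sup, then gives \eqref{eq:IS:b:flow}, with $\beta_3$ depending only on $\O$ and $r$. I expect the main obstacle to be the skew part. It requires a divergence-free $\bB{\tau}$ in the $s$-setting, and I would rely on known $\mbb{H}(\bdiv_s)$ results for this, since $\bdiv\bB{\tau}=0$ makes the $s$ versus $2$ distinction irrelevant and the Hilbert-space construction from \cite{Gatica2023} transfers verbatim.
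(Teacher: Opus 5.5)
Your argument is correct, but it is organized differently from the paper's proof.

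\textbf{Your route.} You build the divergence test tensor as $\nabla\bB{v}$ from a vector Poisson problem, solved on a ball with $\mathrm{W}^{2,s}$ regularity. This tensor is not symmetric, so its pairing with $\widetilde{\bB{\gamma}}$ does not vanish. You therefore need the ``divergence inf-sup plus inf-sup on the divergence-free kernel'' combination step, which does close as you describe. For the skew part you import the classical weakly-symmetric elasticity result. Your sketch of that construction (a Stokes problem followed by a curl potential) is imprecise, and curl potentials are dimension-dependent, but the fact you cite is true and standard.

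\textbf{The paper's route.} The paper avoids both the coupling and the black box by using symmetric gradients from variational $\bH^1_0(\O)$ elasticity problems.
\begin{itemize}
\item $\bB{\tau}_w$ comes from $\bB{e}(\bB{z}_w)$, where $\int_\O\bB{e}(\bB{z}_w):\bB{e}(\bB{v})=-\int_\O\bB{w}_s\cdot\bB{v}$. This gives $\bdiv(\bB{\tau}_w)=\bB{w}_s$. Because $\bB{\tau}_w$ is symmetric, $\int_\O\bB{\tau}_w:\widetilde{\bB{\gamma}}=0$ automatically.
\item $\bB{\tau}_\gamma$ comes from $\bB{e}(\bB{z}_\gamma)+\widetilde{\bB{\gamma}}$, where $\int_\O\bB{e}(\bB{z}_\gamma):\bB{e}(\bB{v})=-\int_\O\widetilde{\bB{\gamma}}:\nabla\bB{v}$. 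It is divergence-free by this very equation. Orthogonality of symmetric and skew tensors gives $\int_\O\bB{\tau}_\gamma:\widetilde{\bB{\gamma}}=\|\widetilde{\bB{\gamma}}\|_{0;\O}^2$.
\end{itemize}
The two lower bounds then decouple completely and are combined simply by taking their maximum.

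\textbf{Comparison.} The paper's argument is self-contained and dimension-independent, and gives explicit constants. It needs only Lax--Milgram, Korn's inequality, and the embedding $\bH^1(\O)\hookrightarrow\bL^r(\O)$, which is the dual of your $\mathrm{W}^{1,s}\hookrightarrow\mathrm{L}^2$; no elliptic regularity is required. Your version relies on standard black boxes: Calder\'on--Zygmund regularity, the Arnold--Falk--Winther kernel inf-sup, and the abstract combination lemma. That makes it more generic but heavier. In particular, the skew part, which you expected to be the main obstacle, becomes a two-line computation with the paper's construction.
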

\begin{proof}
 The particular case $r=4$ and $s=4/3$ is established in \cite[Lemma 3.5]{Gatica2021}. The proof corresponds to a generalized version to general $r$ and $s$, and can be found in Appendix~\ref{appendix:proof:infsup:b:flow}.
\end{proof}

The positive semidefiniteness of $\mbf{a}_{\varphi,\vartheta}$ and $\mbf{c}_{\varphi}$, the symmetry of
$\mbf{c}_{\varphi}$, and the inf-sup conditions established in Lemmas~\ref{lemma:infsup:a:flow} and
\ref{lemma:infsup:b:flow} allow us to apply \cite[Theorem 3.2]{Gatica2023}. Hence, there exists
$\alpha_{\mbb{A}}>0$, independent of $\varphi$ and $\vartheta$, such that
\begin{alignat}{2}
\sup_{\substack{ (\vec{\bB{\eta}},\vec{\bB{w}}) \in\mX\times\mbb{M}\setminus\{(\vec{\bB{0}},\vec{\bB{0}})\}}}
\frac{ \mbb{A}_{\varphi,\vartheta} \big((\vec{\bB{\zeta}},\vec{\bB{u}}),(\vec{\bB{\eta}},\vec{\bB{w}})\big)}{
\|(\vec{\bB{\eta}},\vec{\bB{w}})\|}
&\,\geq\,
\alpha_{\mbb{A}}\|(\vec{\bB{\zeta}},\vec{\bB{u}})\|\qquad && \forall (\vec{\bB{\zeta}},\vec{\bB{u}})\in\mX\times\mbb{M}\,,
\label{eq:GIS:flow:1}\\
\sup_{\substack{ (\vec{\bB{\zeta}},\vec{\bB{u}})\in\mX\times\mbb{M}\setminus\{(\vec{\bB{0}},\vec{\bB{0}})\}}}
\frac{\mbb{A}_{\varphi,\vartheta}\big((\vec{\bB{\zeta}},\vec{\bB{u}}),(\vec{\bB{\eta}},\vec{\bB{w}})\big)}{\|(\vec{\bB{\zeta}},\vec{\bB{u}})\|}
&\,\geq\,
\alpha_{\mbb{A}} \|(\vec{\bB{\eta}},\vec{\bB{w}})\| \qquad && \forall (\vec{\bB{\eta}},\vec{\bB{w}})\in\mX\times\mbb{M}\,.
\label{eq:GIS:flow:2}
\end{alignat}
The solvability of \eqref{syst:FP:weak:flow} is stated as follows.

\begin{lemma}\label{lemma:solv:flow}
Let $(r,s)$ and $(\rho,\varrho)$ be pairs of conjugate exponents satisfying \eqref{def:ranges}. Then, for each
$\varphi,\vartheta\in\mathrm{L}^\rho(\O)$, there exists a unique $(\vec{\bB{\zeta}},\vec{\bB{u}})
\in\mX\times\mbb{M}$ solution to \eqref{syst:FP:weak:flow}. Hence, the operator $\bS$ is well defined
by $\bS(\varphi,\vartheta):=\bB{u}$. Moreover, there exists a positive constant $C_{\bS}$, independent
of $\varphi$ and $\vartheta$, such that
\begin{align}\label{eq:apriori:flow}
\|\bS(\varphi,\vartheta)\|_{0,r;\O}
\,=\, \|\bB{u}\|_{0,r;\O}
\,\leq\, \|\vec{\bB{\zeta}}\|_{\mX} + \|\vec{\bB{u}}\|_{\mbb{M}}
\leq C_{\bS} \dataflow\big(\|(\varphi,\vartheta)\|\big)\,.
\end{align}
where $\dataflow(x):= \|\mbf{f}\|x + \|\bB{l}_3\|_{0,s;\O}+\|\bB{u}_{\rm D}\|_{1/2,\Gamma}$ for all $x\in \R^+$.
\end{lemma}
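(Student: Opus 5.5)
The argument follows the template of Lemmas~\ref{lemma:solv:mass} and \ref{lemma:solv:heat}, and is simpler because \eqref{syst:FP:weak:flow} has no convective perturbation. No smallness assumption on the data is therefore needed.

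First, I will note that the global form $\mbb{A}_{\varphi,\vartheta}$ in \eqref{def:A:flow} is bounded on $(\mX\times\mbb{M})^2$. Its bound is $\|\mbf{a}\|+2\|\mbf{b}\|+\|\mbf{c}\|$, which does not depend on $\varphi,\vartheta$: this uses the uniform upper bound $\mu_{\max}$ on the viscosity and the bound $\kappa_{\max}$ on $\mbf{K}^{-1}$ from Assumption~1. Problem \eqref{syst:FP:weak:flow} can then be rewritten in the form
\begin{equation*}
\mbb{A}_{\varphi,\vartheta}\big((\vec{\bB{\zeta}},\vec{\bB{u}}),(\vec{\bB{\eta}},\vec{\bB{w}})\big)=\mbf{G}(\vec{\bB{\eta}})+\mbf{F}_{\varphi,\vartheta}(\vec{\bB{w}})\qquad\forall\,(\vec{\bB{\eta}},\vec{\bB{w}})\in\mX\times\mbb{M}\,.
\end{equation*}
The global inf-sup conditions \eqref{eq:GIS:flow:1}--\eqref{eq:GIS:flow:2} were obtained from Lemmas~\ref{lemma:infsup:a:flow} and \ref{lemma:infsup:b:flow} via \cite[Theorem 3.2]{Gatica2023}. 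Their constant $\alpha_{\mbb{A}}$ is independent of $(\varphi,\vartheta)$. Together with boundedness, they are exactly the hypotheses of the Banach--Ne\v{c}as--Babu\v{s}ka theorem \cite[Theorem 2.6]{Ern2004}, which gives a unique solution $(\vec{\bB{\zeta}},\vec{\bB{u}})$ and hence a well-defined operator $\bS$.

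Next comes the a priori estimate. Theorem 2.6 of \cite{Ern2004} yields
\begin{equation*}
\|\vec{\bB{\zeta}}\|_{\mX}+\|\vec{\bB{u}}\|_{\mbb{M}}\leq \frac{1}{\alpha_{\mbb{A}}}\big(\|\mbf{G}\|+\|\mbf{F}_{\varphi,\vartheta}\|\big)\,,
\end{equation*}
up to a fixed constant coming from the choice of product norm. For $\mbf{G}$ I will use $\|\mbf{G}\|=\max\{1,\|\bB{i}_r\|\}\|\bB{u}_{\rm D}\|_{1/2,\Gamma}$. This relies on the normal trace bound for $\Hdivtr$ with divergence in $\bL^s$, which is justified by the integration by parts formula \eqref{eq:IbyP:Hdivp:tensor}, valid since $s\geq 2d/(d+2)$ in the ranges \eqref{def:ranges}. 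For $\mbf{F}_{\varphi,\vartheta}$ I will apply H\"older's inequality with the pair $(s,r)$ to get
\begin{equation*}
\|\mbf{F}_{\varphi,\vartheta}\|\leq \|\bB{l}_3\|_{0,s;\O}+\|\mbf{f}(\varphi,\vartheta)\|_{0,s;\O}\,.
\end{equation*}

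The one step needing care is bounding $\|\mbf{f}(\varphi,\vartheta)\|_{0,s;\O}$ by $\|\mbf{f}\|\,\|(\varphi,\vartheta)\|$. I will assume, as implicitly used in the bound stated for $\|\mbf{F}_{\varphi,\vartheta}\|$, that $\mbf{f}$ has linear growth, $|\mbf{f}(\varphi,\vartheta)|\leq \|\mbf{f}\|'(|\varphi|+|\vartheta|)$; Boussinesq-type gravity terms are affine in $(\varphi,\vartheta)$, so this is reasonable. Since $\rho>2>s$ and $\O$ is bounded, the embedding $\L^\rho(\O)\hookrightarrow\L^s(\O)$ holds with constant $|\O|^{1/s-1/\rho}$, and this factor can be absorbed into $\|\mbf{f}\|$.

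Collecting these bounds gives \eqref{eq:apriori:flow}, with $C_{\bS}$ depending only on $\alpha_{\mbb{A}}$, $\|\bB{i}_r\|$ and $|\O|$. The first inequality $\|\bB{u}\|_{0,r;\O}\leq \|\vec{\bB{u}}\|_{\mbb{M}}$ is trivial from the definition of $\|\cdot\|_{\mbb{M}}$. The independence of $C_{\bS}$ from $(\varphi,\vartheta)$ follows because every constant used is uniform in these arguments.
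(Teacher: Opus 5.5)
Your proposal is correct and follows the paper's proof essentially verbatim: boundedness of $\mathbb{A}_{\varphi,\vartheta}$ and the global inf-sup conditions \eqref{eq:GIS:flow:1}--\eqref{eq:GIS:flow:2} are fed into the Banach--Ne\v{c}as--Babu\v{s}ka theorem, and its stability estimate together with the bounds on $\mathbf{G}$ and $\mathbf{F}_{\varphi,\vartheta}$ yields \eqref{eq:apriori:flow}. Your linear-growth assumption on $\mathbf{f}$ is exactly what the paper uses tacitly in its bound for $\|\mathbf{F}_{\varphi,\vartheta}\|$, so stating it is a clarification rather than a deviation.
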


\begin{proof}
The boundedness of $\mbb{A}_{\varphi,\vartheta}$ and the global inf-sup conditions \eqref{eq:GIS:flow:1} and \eqref{eq:GIS:flow:2} allow us to apply \cite[Theorem 2.6]{Ern2004}. Therefore, \eqref{syst:FP:weak:flow} admits a unique solution in
$\mX\times\mbb{M}$. Moreover, the corresponding stability estimate, together with the bounds for $\mbf{F}_{\varphi,\vartheta}$ and $\mbf{G}$, yields \eqref{eq:apriori:flow}. The constant $C_{\bS}$ depends only on $\alpha_{\mbb{A}}$ and the constants entering the continuity bounds for $\mbf{F}_{\varphi,\vartheta}$ and $\mbf{G}$; in particular, it is independent of $\varphi$ and $\vartheta$.
\end{proof}

\subsection{Solvability fully coupled system}\label{sec:CF:WP:coupled}

We now focus on showing the existence and uniqueness of solutions of the fully coupled weak formulation \eqref{syst:weak:coupled} using Banach's fixed-point theorem. To this end, we consider the fixed-point operator $\mbf{T}:(\mathrm{L}^\rho(\O))^2\to (\mathrm{L}^\rho(\O))^2$ defined in \eqref{def:T}, which thanks to Lemmas~\eqref{lemma:solv:mass}, \eqref{lemma:solv:heat} and \eqref{lemma:solv:flow} is well defined. Now, we start proving that $\mbf{T}$ is a self-mapping. Given $\epsilon > 0 $, we let $\mathcal{W}_{\epsilon}$ be the ball in $(\mathrm{L}^\rho(\O))^2$ defined by
\begin{align*}
 \mathcal{W}_\epsilon := \Big\{(\varphi,\vartheta)\in (\mathrm{L}^\rho(\O))^2:\quad \|\varphi\|_{0,\rho;\O} + \|\vartheta\|_{0,\rho;\O}\leq \epsilon \Big\}\,,
\end{align*}
The following lemma provides the conditions under which the fixed-point operator $\mbf{T}$ is a self-mapping on the $\epsilon$-ball $\mathcal{W}_\epsilon$.

\begin{lemma}\label{lemma:selfmapping}
Let $\rho$, $\varrho$, $r$ and $s$ be the real numbers within the range of values in \eqref{def:ranges}, and consider the constants $\alpha_{\bB{A}}$, $\alpha_{\mfk{A}}$, $C_{\mathrm{S}_1}$, $C_{\mathrm{S}_2}$ and $C_{\bS}$ the constants involved in the a priori estimates \eqref{eq:apriori:mass}, \eqref{eq:apriori:heat} and \eqref{eq:apriori:flow}, respectively.
Assume that the data are sufficiently small such that
\begin{align}
C_{\mathrm{S}_1}\datacons + C_{\mathrm{S}_2}\dataheat\leq \epsilon\,\qquad\text{and}\qquad C_{\bS}\,\dataflow(\epsilon)\,\leq\, \tfrac{1}{2}\min\{\alpha_{\bB{A}},\alpha_{\mfk{A}}\}\,.\label{eq:selfmap:cs}
\end{align}
Then, the fixed-point operator $\mbf{T}$ is a self-mapping over $\mathcal{W}_\epsilon$, i.e., $\mbf{T}(\mathcal{W}_\epsilon)\subseteq \mathcal{W}_\epsilon$.
\end{lemma}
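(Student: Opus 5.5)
We fix $(\varphi,\vartheta)\in\mathcal{W}_\epsilon$ and track the three solution operators in the order in which they enter the definition \eqref{def:T}.

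\textbf{Step 1: the velocity.} By Lemma~\ref{lemma:solv:flow}, $\bB{u}:=\bS(\varphi,\vartheta)$ is well defined without any smallness requirement. The a priori estimate \eqref{eq:apriori:flow} gives
\begin{align*}
\|\bB{u}\|_{0,r;\O}\le C_{\bS}\,\dataflow\big(\|\varphi\|_{0,\rho;\O}+\|\vartheta\|_{0,\rho;\O}\big)\le C_{\bS}\,\dataflow(\epsilon).
\end{align*}
The last inequality uses that $x\mapsto\dataflow(x)$ is nondecreasing on $\R^+$, which holds because it is affine with nonnegative slope $\|\mbf{f}\|$. The second hypothesis in \eqref{eq:selfmap:cs} then yields
\begin{align*}
\|\bB{u}\|_{0,r;\O}\le \tfrac12\min\{\alpha_{\bB{A}},\alpha_{\mfk{A}}\}.
\end{align*}
Hence $\bB{u}$ satisfies simultaneously the smallness conditions $\|\bB{u}\|_{0,r;\O}\le\alpha_{\bB{A}}/2$ required in Lemma~\ref{lemma:solv:mass} and $\|\bB{u}\|_{0,r;\O}\le\alpha_{\mfk{A}}/2$ required in Lemma~\ref{lemma:solv:heat}. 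This is the key point: it guarantees that the remaining two components of $\mbf{T}$ are well defined on $\mathcal{W}_\epsilon$.

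\textbf{Step 2: the concentration.} Lemma~\ref{lemma:solv:mass}, applied with the pair $(\varphi,\bB{u})$, shows that $\widehat\phi:=\mathrm S_1(\varphi,\bB{u})$ exists. Its estimate \eqref{eq:apriori:mass} gives $\|\widehat\phi\|_{0,\rho;\O}\le C_{\mathrm S_1}\datacons$, where $C_{\mathrm S_1}$ does not depend on $(\varphi,\bB{u})$.

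\textbf{Step 3: the temperature.} Lemma~\ref{lemma:solv:heat}, applied with the pair $(\widehat\phi,\bB{u})$, is admissible because $\widehat\phi\in\L^\rho(\O)$ is arbitrary there and $\bB{u}$ satisfies the smallness bound from Step~1. It defines $\widehat\theta:=\mathrm S_2(\widehat\phi,\bB{u})$ with $\|\widehat\theta\|_{0,\rho;\O}\le C_{\mathrm S_2}\dataheat$, again with a constant independent of its arguments.

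\textbf{Step 4: conclusion.} Since $\mbf{T}(\varphi,\vartheta)=(\widehat\phi,\widehat\theta)$, adding the bounds from Steps~2 and~3 and using the first hypothesis in \eqref{eq:selfmap:cs} gives
\begin{align*}
\|\widehat\phi\|_{0,\rho;\O}+\|\widehat\theta\|_{0,\rho;\O}\le C_{\mathrm S_1}\datacons+C_{\mathrm S_2}\dataheat\le\epsilon .
\end{align*}
Therefore $\mbf{T}(\varphi,\vartheta)\in\mathcal{W}_\epsilon$.

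The only delicate point is Step~1. One has to ensure that the velocity fed into $\mathrm S_1$ and $\mathrm S_2$ meets their perturbation smallness conditions; this follows from the monotonicity of $\dataflow$ together with the fact that the constants $\alpha_{\bB{A}}$ and $\alpha_{\mfk{A}}$ are uniform in $\varphi$. The remaining steps are direct applications of the a priori estimates, whose constants are independent of the input functions.
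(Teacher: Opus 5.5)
Your proof is correct and follows essentially the same route as the paper. You bound $\|\bB{u}\|_{0,r;\O}$ by the flow a priori estimate and the second condition in \eqref{eq:selfmap:cs}, which validates the smallness hypotheses of Lemmas~\ref{lemma:solv:mass} and \ref{lemma:solv:heat}, and then you add their a priori bounds and use the first condition; your remarks on the monotonicity of $\dataflow$ and on applying $\mathrm{S}_2$ to the pair $(\widehat\phi,\bB{u})$ only make explicit what the paper leaves implicit.
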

\begin{proof}
 Given $(\varphi,\vartheta)\in \mathcal{W}_{\epsilon}$, from the a priori estimate \eqref{eq:apriori:flow}, we have that
\begin{align*}
 \|\bS(\varphi,\vartheta)\|_{0,r;\O} \,=\, \|\bB{u}\|_{0,r;\O} \,\leq\, C_{\bS}\dataflow(\epsilon)\,,
\end{align*}
and therefore condition \eqref{eq:selfmap:cs} implies that $\|\bB{u}\|_{0,r;\O}\leq \tfrac{1}{2}\alpha_{\bB{A}}$ and $\|\bB{u}\|_{0,r;\O}\leq \tfrac{1}{2}\alpha_{\mfk{A}}$. Then, using Lemmas~\ref{lemma:solv:mass} and \ref{lemma:solv:heat}, with velocity field given by $\bS(\varphi,\vartheta)$, we obtain
\begin{align*}
 \|\mbf{T}(\varphi,\vartheta)\|&\, = \,\|\mathrm{S}_1\big(\varphi,\bS(\varphi,\vartheta)\big)\|_{0,\rho;\O} + \|\mathrm{S}_2\big(\mathrm{S}_1(\varphi,\bS(\varphi,\vartheta)),\bS(\varphi,\vartheta)\big)\|_{0,\rho;\O} \,\leq \, C_{\mathrm{S}_1} \datacons\,+\, C_{\mathrm{S}_2}\dataheat\,\leq \, \epsilon\,.
\end{align*}
Thus, we conclude the proof.
\end{proof}

To address the Lipschitz continuity of the fixed-point operator $\mbf{T}$, we study the continuity of the three operators $\mathrm{S}_1$, $\mathrm{S}_2$ and $\bS$ related to the three decoupled problems \eqref{syst:FP:weak:mass}, \eqref{syst:FP:weak:heat} and \eqref{syst:FP:weak:flow}, respectively. For this end, and with the aim of obtaining a bounded Lipschitz continuity constant of $\mbf{T}$, we introduce the following regularity assumptions:

\begin{itemize}[leftmargin=\widthof{\indent\textbf{(H)}}]
\item[\textbf{(H)}]
There exists a regularity exponent $\varepsilon$ satisfying $\frac{d}{\rho} \leq \varepsilon < \frac{d}{2}$, such that
$\Phi\in {\rm H}^{-1/2+\varepsilon}(\Gamma)$, $\theta_{\rm D}\in {\rm H}^{1/2+\varepsilon}(\Gamma_{\rm D})$, and $\bB{u}_{\rm D}\in\bH^{1/2+\varepsilon}(\Gamma)$.
In addition, there exist positive constants $C_{\varepsilon,1}$, $C_{\varepsilon,2}$, and $C_{\varepsilon,3}$, independent of the corresponding input functions, such that the following properties hold.
\begin{itemize}[leftmargin=0.4cm]
\item[$1$.] For each $(\varphi,\bB{w})\in\L^\rho(\O)\times\bL^r(\O)$ with $\|\bB{w}\|_{0,r;\O}\leq\tfrac{1}{2}\alpha_{\bB{A}}$, the solution to \eqref{syst:FP:weak:mass} satisfies that $\bB{r}\in\bH^\varepsilon(\O)$ and
\begin{align}
\|\bB{r}\|_{\varepsilon;\O} \leq C_{\varepsilon,1} \datacons\,.\label{eq:bound:RH1}
\end{align}
\item[$2$.] For each $(\varphi,\bB{w})\in\L^\rho(\O)\times\bL^r(\O)$ with $\|\bB{w}\|_{0,r;\O}\leq\tfrac{1}{2}\alpha_{\mfk{A}}$, the solution to \eqref{syst:FP:weak:heat} satisfies that $\bB{t}\in\bH^\varepsilon(\O)$ and
\begin{align}
\|\bB{t}\|_{\varepsilon;\O} \leq C_{\varepsilon,2} \dataheat\,. \label{eq:bound:RH2}
\end{align}
\item[$3$.] For each $(\varphi,\vartheta)\in \L^\rho(\O)\times\L^\rho(\O)$, the solution to \eqref{syst:FP:weak:flow} satisfies that $\bB{\zeta}\in\mbb{H}^\varepsilon(\O)$ and
\begin{align}
\|\bB{\zeta}\|_{\varepsilon;\O} \leq C_{\varepsilon,3} \dataflow\big(\|(\varphi,\vartheta)\|\big)\,.\label{eq:bound:RH3}
\end{align}
\end{itemize}
\end{itemize}
For the regularity exponent $\varepsilon$ introduced in
\textbf{(H)}, we define the corresponding Sobolev exponent
\begin{align*}
\varepsilon^* := \frac{2d}{d-2\varepsilon}\,,
\end{align*}
where we have that $\varepsilon^*\geq r$.  Therefore, the Sobolev embedding theorem and the continuous inclusion $\mathrm L^{\varepsilon^*}(\O)\hookrightarrow\mathrm L^r(\O)$ yield $\bH^\varepsilon(\O) \hookrightarrow \bL^{\varepsilon^*}(\O) \hookrightarrow \bL^r(\O)$ and $
\mbb{H}^\varepsilon(\O) \hookrightarrow \mbb{L}^{\varepsilon^*}(\O) \hookrightarrow \mbb{L}^r(\O)\,$.
More precisely, denoting by $\bB{i}_{\varepsilon}: \bH^\varepsilon(\O)\to\bL^{\varepsilon^*}(\O)$ and $\bB{j}_{\varepsilon}: \mbb{H}^\varepsilon(\O)\to\mbb{L}^{\varepsilon^*}(\O)$ the corresponding Sobolev injections, we obtain
\begin{align}
\begin{aligned}
\|\bB{r}\|_{0,r;\O} \leq |\O|^{\widetilde{r}} \|\bB{i}_{\varepsilon}\| \|\bB{r}\|_{\varepsilon;\O}, \qquad
\|\bB{t}\|_{0,r;\O} \leq |\O|^{\widetilde{r}} \|\bB{i}_{\varepsilon}\| \|\bB{t}\|_{\varepsilon;\O}, \qquad
\|\bB{\zeta}\|_{0,r;\O} &\leq |\O|^{\widetilde{r}} \|\bB{j}_{\varepsilon}\| \|\bB{\zeta}\|_{\varepsilon;\O}, \label{eq:aux:RHs}
\end{aligned}
\end{align}
with $\tilde{r} = 1/r-1/\varepsilon^*$.
For ease in the read and considering $\varphi,\vartheta \in \mathcal{W}_{\epsilon}$, we introduce the following data-dependent constants arising in the right-hand sides of \eqref{eq:bound:RH1}, \eqref{eq:bound:RH2} and \eqref{eq:bound:RH3} respectively by
\begin{align}
 \widehat{C}_1 &:= C_{\varepsilon,1}\datacons\,, \qquad
 \widehat{C}_2 := C_{\varepsilon,2}\dataheat\,, \qquad
 \widehat{C}_3 := C_{\varepsilon,3}\dataflow(\epsilon)\,. \label{eq:bound:Cs}
\end{align}
Next, we present and prove the Lipschitz continuity of the operators $\mathrm{S}_1$, $\mathrm{S}_2$ and $\bS$.

\begin{lemma}\label{lemma:LC:mass}
Let $r$ and $\rho$ satisfy \eqref{def:ranges}, and assume the regularity hypothesis~\textbf{(H)}. Then, there exists a positive constant $L_{\mathrm{S}_1}$, independent of the data and of the input functions, such that
\begin{align}
\|\mathrm{S}_1(\varphi_1,\bB{w}_1) - \mathrm{S}_1(\varphi_2,\bB{w}_2)\|_{0,\rho;\O}
\leq L_{\mathrm{S}_1} \Big( \widehat{C}_1\big(L_{\mathcal{D}_1}+1\big) + L_{\rm fbk}\Big)
\|(\varphi_1,\bB{w}_1)-(\varphi_2,\bB{w}_2)\|\,,\label{eq:LC:mass}
\end{align}
for all $\varphi_1,\varphi_2\in\L^\rho(\O)$ and $\bB{w}_1,\bB{w}_2\in\bL^r(\O)$ with $\|\bB{w}_1\|_{0,r;\O},\|\bB{w}_2\|_{0,r;\O}
\leq\alpha_{\bB{A}}/2$.
\end{lemma}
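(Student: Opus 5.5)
The plan is to subtract the two instances of \eqref{syst:FP:weak:mass}. The difference of solutions then solves a problem governed by the perturbed global form $\bB{A}_{\varphi_1,\bB{w}_1}$, and we invoke its inf-sup condition \eqref{eq:GIS:mass:1}, which holds with constant $\alpha_{\bB{A}}/2$ since $\|\bB{w}_1\|_{0,r;\O}\le\alpha_{\bB{A}}/2$.

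Let $(\vtmass_i,\vec{\bB{\phi}}_i)$ denote the solution associated with $(\varphi_i,\bB{w}_i)$, for $i=1,2$. Subtracting the two systems and adding and subtracting $a_{\varphi_1}(\vtmass_2,\cdot)$ and $\widetilde b_{\bB{w}_1}(\cdot,\vec{\bB{\phi}}_2)$, we find that for all $(\vsmass,\vec{\bB{\psi}})$
\begin{align*}
\bB{A}_{\varphi_1,\bB{w}_1}\big((\vtmass_1-\vtmass_2,\vec{\bB{\phi}}_1-\vec{\bB{\phi}}_2),(\vsmass,\vec{\bB{\psi}})\big)
= \mathcal{F}_{\varphi_1}(\vsmass)-\mathcal{F}_{\varphi_2}(\vsmass) - \big(a_{\varphi_1}-a_{\varphi_2}\big)(\vtmass_2,\vsmass) + \widetilde b_{\bB{w}_1-\bB{w}_2}(\vsmass,\vec{\bB{\phi}}_2)\,.
\end{align*}
The inf-sup condition therefore gives
\[
\tfrac{\alpha_{\bB{A}}}{2}\|\phi_1-\phi_2\|_{0,\rho;\O}\le \sup_{\vsmass\neq\vec{\bB{0}}}\frac{\text{RHS}}{\|\vsmass\|_{\bH_1}}\,.
\]
It remains to bound the three terms on the right.

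\emph{Flux term.} By the Lipschitz property \eqref{eq:LC:data} and H\"older's inequality,
\[
\int_\O(\fbk(\varphi_1)-\fbk(\varphi_2))\bB{k}\cdot\bB{s}\le L_{\rm fbk}|\O|^{(\rho-2)/(2\rho)}\|\varphi_1-\varphi_2\|_{0,\rho;\O}\|\bB{s}\|_{0;\O}\,.
\]

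\emph{Diffusion term.} This reads $\int_\O(\mathcal{D}_1(\varphi_1)-\mathcal{D}_1(\varphi_2))\tmass_2\cdot\bB{s}$ and is the main obstacle: an $\mathrm{L}^2$ bound on $\tmass_2$ alone cannot be combined with an $\mathrm{L}^\rho$ bound on $\varphi_1-\varphi_2$. Here hypothesis \textbf{(H)} enters. Since $1/\rho+1/r=1/2$, H\"older gives the bound
\[
L_{\mathcal{D}_1}\|\varphi_1-\varphi_2\|_{0,\rho;\O}\|\tmass_2\|_{0,r;\O}\|\bB{s}\|_{0;\O}\,.
\]
By \eqref{eq:aux:RHs} and \eqref{eq:bound:RH1} (applicable since $\|\bB{w}_2\|_{0,r;\O}\le\alpha_{\bB{A}}/2$), we have $\|\tmass_2\|_{0,r;\O}\le|\O|^{\tilde r}\|\bB{i}_\varepsilon\|\,\widehat{C}_1$. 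The condition $\varepsilon\ge d/\rho$ is exactly what makes $\varepsilon^*\ge r$.

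\emph{Convective term.} This is $\int_\O\phi_2(\bB{w}_1-\bB{w}_2)\cdot\bB{s}\le\|\phi_2\|_{0,\rho;\O}\|\bB{w}_1-\bB{w}_2\|_{0,r;\O}\|\bB{s}\|_{0;\O}$. To obtain the factor $\widehat{C}_1$ rather than the a priori bound $C_{\mathrm S_1}\datacons$, I would control $\phi_2$ through the regular gradient: from $\tmass_2=\nabla\phi_2$ and the trace variable, $\|\phi_2\|_{0,\rho;\O}$ should be bounded by the $\bH^\varepsilon$ regularity of $\tmass_2$ (with $\phi_2\in\mathrm{H}^1$). If this proves delicate, I would simply use \eqref{eq:apriori:mass} and absorb the ratio $C_{\mathrm S_1}/C_{\varepsilon,1}$ into $L_{\mathrm S_1}$; both bounds are multiples of $\datacons$, so the factor $\widehat{C}_1$ is recovered up to a constant independent of the data.

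Collecting the three estimates, we define $L_{\mathrm S_1}$ as $2/\alpha_{\bB{A}}$ times the maximum of the geometric constants $|\O|^{(\rho-2)/(2\rho)}$, $|\O|^{\tilde r}\|\bB{i}_\varepsilon\|$ and the ratio $C_{\mathrm S_1}/C_{\varepsilon,1}$. Using $\|\varphi_1-\varphi_2\|+\|\bB{w}_1-\bB{w}_2\|=\|(\varphi_1,\bB{w}_1)-(\varphi_2,\bB{w}_2)\|$, this yields \eqref{eq:LC:mass} with the factor $\widehat{C}_1(L_{\mathcal{D}_1}+1)+L_{\rm fbk}$.
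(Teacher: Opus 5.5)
Your proposal is correct and follows essentially the same route as the paper. It subtracts the two problems, applies the inf-sup condition \eqref{eq:GIS:mass:1} for $\bB{A}_{\varphi_1,\bB{w}_1}$, and bounds the diffusion, convective and flux differences by H\"older's inequality, using \textbf{(H)} for $\|\tmass_2\|_{0,r;\O}$ and the a priori estimate \eqref{eq:apriori:mass} (rewritten as a multiple of $\widehat{C}_1$) for $\|\phi_2\|_{0,\rho;\O}$. Your fallback for the convective term is exactly the paper's argument, since its constant $C_1$ is your ratio $C_{\mathrm S_1}/C_{\varepsilon,1}$, so the speculative gradient-based control of $\phi_2$ is unnecessary.
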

\begin{proof}
The proof can be found in Appendix~\ref{appendix:proof:LC:mass}.
\end{proof}

\begin{lemma}\label{lemma:LC:heat}
Let $r$ and $\rho$ satisfy \eqref{def:ranges}, and assume the regularity hypothesis~\textbf{(H)}. Then, there exists a positive constant $L_{\mathrm{S}_2}$, independent of the data and of the input functions, such that
\begin{align}
\begin{aligned}\label{eq:LC:heat}
&\|\mathrm{S}_2(\varphi_1,\bB{w}_1) - \mathrm{S}_2(\varphi_2,\bB{w}_2)\|_{0,\rho;\O}
\leq L_{\mathrm{S}_2} \widehat{C}_2 \big(L_{\mathcal{D}_2}+1\big) \|(\varphi_1,\bB{w}_1)-(\varphi_2,\bB{w}_2)\|\,,
\end{aligned}
\end{align}
for all $\varphi_1,\varphi_2\in\L^\rho(\O)$ and $\bB{w}_1,\bB{w}_2\in\bL^r(\O)$ with $
\|\bB{w}_1\|_{0,r;\O}, \, \|\bB{w}_2\|_{0,r;\O} \leq \tfrac{1}{2}\alpha_{\mfk{A}}$.
\end{lemma}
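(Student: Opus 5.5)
We argue as one expects Lemma~\ref{lemma:LC:mass} is proved (Appendix~\ref{appendix:proof:LC:mass}): take the two solutions, subtract the two problems, and apply the global inf-sup condition of the perturbed operator to the differences. For $i\in\{1,2\}$ let $(\vtheat_i,\theta_i)\in\bH_2\times\L^\rho(\O)$ be the unique solution of \eqref{syst:FP:weak:heat} with data $(\varphi_i,\bB{w}_i)$ given by Lemma~\ref{lemma:solv:heat}, so that $\theta_i=\mathrm{S}_2(\varphi_i,\bB{w}_i)$, and write $\vtheat_i=(\theat_i,\vsigheat_i)$. The right-hand sides $\mfk{F}$ and $\mfk{G}$ do not depend on $(\varphi,\bB{w})$; this explains why no analogue of $L_{\rm fbk}$ appears in \eqref{eq:LC:heat}. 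Subtracting the two systems and adding and subtracting the mixed terms, we obtain for all $(\vsheat,\varkappa)\in\bH_2\times\L^\rho(\O)$
\begin{align*}
\mfk{A}_{\varphi_1,\bB{w}_1}\big((\vtheat_1-\vtheat_2,\theta_1-\theta_2),(\vsheat,\varkappa)\big)
= \mfk{a}_{\varphi_2}(\vtheat_2,\vsheat)-\mfk{a}_{\varphi_1}(\vtheat_2,\vsheat)
+ \mfk{\widetilde{b}}_{\bB{w}_1-\bB{w}_2}(\vsheat,\theta_2)\,.
\end{align*}
The inf-sup condition for $\mfk{A}_{\varphi_1,\bB{w}_1}$ in its first argument holds with constant $\alpha_{\mfk{A}}/2$, because $\|\bB{w}_1\|_{0,r;\O}\le\alpha_{\mfk{A}}/2$. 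Hence $\|\theta_1-\theta_2\|_{0,\rho;\O}$ is bounded by $2/\alpha_{\mfk{A}}$ times the sup over $(\vsheat,\varkappa)$, normalised, of the right-hand side above. It then remains to bound the two terms on the right.

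\emph{Coefficient term.} We have $\mfk{a}_{\varphi_2}(\vtheat_2,\vsheat)-\mfk{a}_{\varphi_1}(\vtheat_2,\vsheat)=\int_\O(\mathcal{D}_2(\varphi_2)-\mathcal{D}_2(\varphi_1))\theat_2\cdot\bB{q}$. By the Lipschitz bound \eqref{eq:LC:data} and H\"older's inequality with exponents $\rho,r,2$ (recall $1/\rho+1/r=1/2$), this is at most
\begin{align*}
L_{\mathcal{D}_2}\|\varphi_1-\varphi_2\|_{0,\rho;\O}\,\|\theat_2\|_{0,r;\O}\,\|\bB{q}\|_{0;\O}\,.
\end{align*}
This is where hypothesis \textbf{(H)}.2 enters. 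By \eqref{eq:aux:RHs} and \eqref{eq:bound:RH2},
\begin{align*}
\|\theat_2\|_{0,r;\O}\le|\O|^{\widetilde r}\|\bB{i}_\varepsilon\|\,C_{\varepsilon,2}\dataheat=|\O|^{\widetilde r}\|\bB{i}_\varepsilon\|\,\widehat{C}_2\,,
\end{align*}
and this bound is uniform in $\varphi_2,\bB{w}_2$. This step is the main obstacle. Without the additional regularity, $\theat_2$ lies only in $\bL^2(\O)$, and the product with the merely $\L^\rho$ difference $\mathcal{D}_2(\varphi_1)-\mathcal{D}_2(\varphi_2)$ cannot be paired with $\bB{q}\in\bL^2(\O)$. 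The restriction $d/\rho\le\varepsilon$, which gives $\varepsilon^*\ge r$, is exactly what makes this term well defined and bounded.

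\emph{Convective term.} We have $|\mfk{\widetilde{b}}_{\bB{w}_1-\bB{w}_2}(\vsheat,\theta_2)|\le\|\bB{w}_1-\bB{w}_2\|_{0,r;\O}\,\|\theta_2\|_{0,\rho;\O}\,\|\bB{q}\|_{0;\O}$. The a priori estimate \eqref{eq:apriori:heat} gives $\|\theta_2\|_{0,\rho;\O}\le C_{\mathrm{S}_2}\dataheat$, and $C_{\mathrm{S}_2}\dataheat$ is a constant multiple of $\widehat{C}_2=C_{\varepsilon,2}\dataheat$, with ratio $C_{\mathrm{S}_2}/C_{\varepsilon,2}$ independent of the data. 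This term therefore contributes the ``$+1$'' in the factor $(L_{\mathcal{D}_2}+1)$.

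Collecting both terms and setting
\begin{align*}
L_{\mathrm{S}_2}:=\tfrac{2}{\alpha_{\mfk{A}}}\max\big\{|\O|^{\widetilde r}\|\bB{i}_\varepsilon\|,\,C_{\mathrm{S}_2}/C_{\varepsilon,2}\big\}
\end{align*}
yields \eqref{eq:LC:heat}, where $\|(\varphi_1,\bB{w}_1)-(\varphi_2,\bB{w}_2)\|=\|\varphi_1-\varphi_2\|_{0,\rho;\O}+\|\bB{w}_1-\bB{w}_2\|_{0,r;\O}$. The constant $L_{\mathrm{S}_2}$ is independent of the data and of the inputs.
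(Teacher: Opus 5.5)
Your proof is correct and follows the paper's argument. The paper proves this lemma by repeating the proof of Lemma~\ref{lemma:LC:mass}: subtract the two problems, apply the inf-sup condition of $\mfk{A}_{\varphi_1,\bB{w}_1}$ with constant $\alpha_{\mfk{A}}/2$, bound the $\mathcal{D}_2$-term with H\"older (exponents $(\rho,r,2)$) plus \textbf{(H)}, and bound the convective term through the a priori estimate written as a data-independent multiple of $\widehat{C}_2$. That is what you do, down to the form of $L_{\mathrm{S}_2}$.

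One small correction to your side commentary. Since $\mathcal{D}_2$ is bounded, the coefficient term is already well defined for $\theat_2\in\bL^2(\O)$. What \textbf{(H)} actually provides is a bound that is linear in $\|\varphi_1-\varphi_2\|_{0,\rho;\O}$.
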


\begin{proof}
The proof follows by the same arguments as the proof of Lemma~\ref{lemma:LC:mass}, using the global bilinear form \eqref{def:A:heat}, the Lipschitz continuity of $\mathcal{D}_2$ in \eqref{eq:LC:data}, and the regularity hypothesis~\textbf{(H)}.
\end{proof}

\begin{lemma}\label{lemma:LC:flow}
Let $r$ and $\rho$ satisfy \eqref{def:ranges}, and assume the regularity hypothesis~\textbf{(H)}. Then, there exists a positive constant $L_{\bS}$, independent of the data and the input functions, such that
\begin{align}
\begin{aligned}
&\|\bS(\varphi_1,\vartheta_1) - \bS(\varphi_2,\vartheta_2)\|_{0,r;\O}\\
&\qquad\leq L_{\bS}
\Big( \big(L_\mu + \delta_4L_{\mbf K}\big)\dataflow\big(\|(\varphi_2,\vartheta_2)\|\big) + L_{\mbf f}\Big)\|(\varphi_1,\vartheta_1)-(\varphi_2,\vartheta_2)\|\,,\label{eq:LC:S}
\end{aligned}
\end{align}
for all $(\varphi_1,\vartheta_1),(\varphi_2,\vartheta_2)\in(\L^\rho(\O))^2$.
\end{lemma}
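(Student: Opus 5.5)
We argue as in Lemma~\ref{lemma:LC:mass}, using the global inf-sup condition \eqref{eq:GIS:flow:1} of $\mbb{A}_{\varphi,\vartheta}$, whose constant $\alpha_{\mbb{A}}$ does not depend on $(\varphi,\vartheta)$.

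For $i\in\{1,2\}$, let $(\vec{\bB{\zeta}}_i,\vec{\bB{u}}_i)\in\mX\times\mbb{M}$ be the solution of \eqref{syst:FP:weak:flow} with data $(\varphi_i,\vartheta_i)$, so that $\bB{u}_i=\bS(\varphi_i,\vartheta_i)$. Write \eqref{syst:FP:weak:flow} in global form, $\mbb{A}_{\varphi_i,\vartheta_i}\big((\vec{\bB{\zeta}}_i,\vec{\bB{u}}_i),(\vec{\bB{\eta}},\vec{\bB{w}})\big)=\mbf{G}(\vec{\bB{\eta}})+\mbf{F}_{\varphi_i,\vartheta_i}(\vec{\bB{w}})$, and subtract the two equations. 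Since $\mbf{b}$ and $\mbf{G}$ do not depend on $(\varphi,\vartheta)$, this yields, for all test pairs,
\begin{align*}
\mbb{A}_{\varphi_1,\vartheta_1}\big((\vec{\bB{\zeta}}_1-\vec{\bB{\zeta}}_2,\vec{\bB{u}}_1-\vec{\bB{u}}_2),(\vec{\bB{\eta}},\vec{\bB{w}})\big)
&= \int_\O\big(\mu(\varphi_2,\vartheta_2)-\mu(\varphi_1,\vartheta_1)\big)\bB{\zeta}_2:\bB{\eta}\\
&\quad- \delta_4\int_\O\big(\mbf{K}^{-1}(\varphi_2)-\mbf{K}^{-1}(\varphi_1)\big)\bB{u}_2\cdot\bB{w}\\
&\quad+ \int_\O\big(\mbf{f}(\varphi_2,\vartheta_2)-\mbf{f}(\varphi_1,\vartheta_1)\big)\cdot\bB{w}\,.
\end{align*}
The inf-sup condition \eqref{eq:GIS:flow:1} then bounds $\|\bB{u}_1-\bB{u}_2\|_{0,r;\O}\le\|(\vec{\bB{\zeta}}_1-\vec{\bB{\zeta}}_2,\vec{\bB{u}}_1-\vec{\bB{u}}_2)\|$ by $\alpha_{\mbb{A}}^{-1}$ times the supremum of the right-hand side over unit test pairs.

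It remains to estimate the three terms. For the \textbf{$\mu$ term}, use Hölder's inequality with exponents $1/\rho+1/r+1/2=1$ together with \eqref{eq:LC:data}. This gives the bound $L_\mu\|(\varphi_1,\vartheta_1)-(\varphi_2,\vartheta_2)\|\,\|\bB{\zeta}_2\|_{0,r;\O}\|\bB{\eta}\|_{0;\O}$. Here the regularity hypothesis \textbf{(H)}.3 enters: combined with \eqref{eq:aux:RHs}, it gives $\|\bB{\zeta}_2\|_{0,r;\O}\le|\O|^{\widetilde r}\|\bB{j}_\varepsilon\|C_{\varepsilon,3}\dataflow(\|(\varphi_2,\vartheta_2)\|)$. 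For the \textbf{permeability term}, apply Hölder's inequality with $\mbf{K}^{-1}$ Lipschitz in $\L^\rho$, $\bB{u}_2$ in $\bL^r$ and $\bB{w}$ in $\bL^2$ (with $\|\bB{w}\|_{0;\O}\le|\O|^{(r-2)/(2r)}\|\bB{w}\|_{0,r;\O}$). The a priori estimate \eqref{eq:apriori:flow} controls $\|\bB{u}_2\|_{0,r;\O}\le C_{\bS}\dataflow(\|(\varphi_2,\vartheta_2)\|)$, so no extra regularity is needed for this term. For the \textbf{source term}, assume $\mbf{f}$ is Lipschitz with constant $L_{\mbf f}$, which is implicit in the statement. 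Then Hölder's inequality with $\|\cdot\|_{0,s;\O}\lesssim\|\cdot\|_{0,\rho;\O}$ (valid since $s<2<\rho$ on a bounded domain) gives the bound $L_{\mbf f}\|(\varphi_1,\vartheta_1)-(\varphi_2,\vartheta_2)\|\,\|\bB{w}\|_{0,r;\O}$.

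Collecting the three bounds and absorbing $\alpha_{\mbb{A}}^{-1}$, $C_{\varepsilon,3}$, $C_{\bS}$, $\|\bB{j}_\varepsilon\|$ and the powers of $|\O|$ into $L_{\bS}$ yields \eqref{eq:LC:S}.

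The main obstacle is the $\mu$ term. The perturbation multiplies $\bB{\zeta}_2$, which is a priori only in $\mbb{L}^2$, against a difference that is only in $\L^\rho$ and a test tensor in $\mbb{L}^2$. Closing the Hölder estimate therefore requires $\bB{\zeta}_2\in\mbb{L}^r$, and this is exactly why \textbf{(H)}.3 and the embedding $\mbb{H}^\varepsilon\hookrightarrow\mbb{L}^{\varepsilon^*}\hookrightarrow\mbb{L}^r$ are needed; the condition $\varepsilon\ge d/\rho$ ensures $\varepsilon^*\ge r$.
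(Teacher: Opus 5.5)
Your proposal is correct and follows essentially the paper's route: subtract the two global problems to isolate the $\mu$, $\mbf{K}^{-1}$ and $\mbf{f}$ perturbations, bound them by H\"older with exponents $(\rho,r,2)$ using \textbf{(H)}.3 for $\|\bB{\zeta}_2\|_{0,r;\O}$ and the a priori estimate \eqref{eq:apriori:flow} for $\bB{u}_2$, and close with the uniform inf-sup condition \eqref{eq:GIS:flow:1}. The differences are cosmetic: in the permeability term you put $\bB{u}_2$ in $\bL^r$ and the test function in $\bL^2$, and you bound the source term through $\L^\rho(\O)\hookrightarrow\L^s(\O)$, which gives the same factor $|\O|^{1/2}$. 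You also rightly note that the Lipschitz continuity of $\mbf{f}$, which the paper uses as well, is not actually listed in \eqref{eq:LC:data}.
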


\begin{proof}
The proof can be found in Appendix~\ref{appendix:proof:LC:flow}.
\end{proof}

\begin{lemma}\label{lemma:LC:T}
Let $(\rho,\varrho)$ and $(r,s)$ be pairs of conjugate exponents satisfying \eqref{def:ranges}. Assume that the regularity
hypothesis~\textbf{(H)} holds and that the data are sufficiently small so that, for some $\epsilon>0$, conditions \eqref{eq:selfmap:cs} are satisfied. Then, the restriction of the operator $\mbf T$, defined in \eqref{def:T}, to $\mathcal W_\epsilon$ is Lipschitz continuous. More precisely, there exists $L_{\mbf T}>0$ such that
\begin{align}
\|\mbf T(\varphi_1,\vartheta_1)
-
\mbf T(\varphi_2,\vartheta_2)\|
\leq
L_{\mbf T}
\|(\varphi_1,\vartheta_1)
-
(\varphi_2,\vartheta_2)\|\,,
\label{eq:LC:T}
\end{align}
for all $(\varphi_1,\vartheta_1),(\varphi_2,\vartheta_2) \in\mathcal W_\epsilon$.
\end{lemma}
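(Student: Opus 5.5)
The plan is to compose the three Lipschitz estimates of Lemmas~\ref{lemma:LC:mass}, \ref{lemma:LC:heat} and \ref{lemma:LC:flow}. This only works if every intermediate argument lies in the admissible set where those lemmas apply, so that is checked first.

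\textbf{Admissibility.} Fix $(\varphi_i,\vartheta_i)\in\mathcal W_\epsilon$ for $i=1,2$, and set
\[
\bB{u}_i:=\bS(\varphi_i,\vartheta_i),\qquad \phi_i:=\mathrm S_1(\varphi_i,\bB{u}_i),\qquad \theta_i:=\mathrm S_2(\phi_i,\bB{u}_i).
\]
By \eqref{eq:apriori:flow} and the second condition in \eqref{eq:selfmap:cs}, exactly as in the proof of Lemma~\ref{lemma:selfmapping}, we have
\[
\|\bB{u}_i\|_{0,r;\O}\leq C_{\bS}\,\dataflow(\epsilon)\leq \tfrac12\min\{\alpha_{\bB{A}},\alpha_{\mfk{A}}\}.
\]
Hence Lemmas~\ref{lemma:LC:mass} and \ref{lemma:LC:heat} may be applied with $\bB{w}_i=\bB{u}_i$. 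Note also that Lemma~\ref{lemma:LC:heat} requires no restriction on its first argument $\phi_i$, since it holds for all of $\L^\rho(\O)$.

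\textbf{Composition of the estimates.} I take the product norms to be sums of component norms, as in the definition of $\mathcal W_\epsilon$. Lemma~\ref{lemma:LC:flow} gives
\[
\|\bB{u}_1-\bB{u}_2\|_{0,r;\O}\leq L_{\bS}\Big((L_\mu+\delta_4L_{\mbf K})\,\dataflow(\epsilon)+L_{\mbf f}\Big)\|(\varphi_1,\vartheta_1)-(\varphi_2,\vartheta_2)\|=:M_{\bS}\,\|(\varphi_1,\vartheta_1)-(\varphi_2,\vartheta_2)\|.
\]
Here $\|(\varphi_2,\vartheta_2)\|\leq\epsilon$ and $\dataflow$ is nondecreasing, which lets us replace $\dataflow(\|(\varphi_2,\vartheta_2)\|)$ by $\dataflow(\epsilon)$. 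Next, Lemma~\ref{lemma:LC:mass} gives
\[
\|\phi_1-\phi_2\|_{0,\rho;\O}\leq M_1\big(\|\varphi_1-\varphi_2\|_{0,\rho;\O}+\|\bB{u}_1-\bB{u}_2\|_{0,r;\O}\big),\qquad M_1:=L_{\mathrm S_1}\big(\widehat C_1(L_{\mathcal D_1}+1)+L_{\rm fbk}\big),
\]
so that $\|\phi_1-\phi_2\|_{0,\rho;\O}\leq M_1(1+M_{\bS})\|(\varphi_1,\vartheta_1)-(\varphi_2,\vartheta_2)\|$. Finally, Lemma~\ref{lemma:LC:heat} with $M_2:=L_{\mathrm S_2}\widehat C_2(L_{\mathcal D_2}+1)$ yields
\[
\|\theta_1-\theta_2\|_{0,\rho;\O}\leq M_2\big(\|\phi_1-\phi_2\|_{0,\rho;\O}+\|\bB{u}_1-\bB{u}_2\|_{0,r;\O}\big)\leq M_2\big(M_1(1+M_{\bS})+M_{\bS}\big)\|(\varphi_1,\vartheta_1)-(\varphi_2,\vartheta_2)\|.
\]

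\textbf{Conclusion.} Adding the bounds for $\phi_1-\phi_2$ and $\theta_1-\theta_2$ gives \eqref{eq:LC:T} with
\[
L_{\mbf T}:=M_1(1+M_{\bS})(1+M_2)+M_2M_{\bS}.
\]
This constant depends only on the data through $\widehat C_1$, $\widehat C_2$ and $\dataflow(\epsilon)$ (cf.~\eqref{eq:bound:Cs}), which will matter later when one imposes $L_{\mbf T}<1$ for Banach's theorem.

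The only genuinely delicate point is the admissibility check: the velocities produced by $\bS$ must satisfy the smallness bounds required by the hypotheses of the $\mathrm S_1,\mathrm S_2$ Lipschitz lemmas. That check is exactly what \eqref{eq:selfmap:cs} provides. Once it is in place, the rest is bookkeeping of constants.
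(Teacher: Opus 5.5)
Your proposal is correct and follows the paper's proof essentially step by step. You make the same admissibility check (the velocities $\bB{u}_i=\bS(\varphi_i,\vartheta_i)$ satisfy the smallness bounds via \eqref{eq:apriori:flow} and \eqref{eq:selfmap:cs}), then chain Lemmas~\ref{lemma:LC:flow}, \ref{lemma:LC:mass} and \ref{lemma:LC:heat}, and your constant $M_1(1+M_{\bS})(1+M_2)+M_2M_{\bS}$, with $M_1,M_2,M_{\bS}$ playing the roles of $\widetilde L_{\mathrm S_1},\widetilde L_{\mathrm S_2},\widetilde L_{\bS,\epsilon}$, expands to exactly the paper's $L_{\mbf T}$ in \eqref{def:LC:T}.
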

\begin{proof}
For convenience, we introduce the total Lipschitz constants
associated with Lemmas~\ref{lemma:LC:mass}--\ref{lemma:LC:flow}:
\begin{align*}
\begin{aligned}%
\widetilde{L}_{\mathrm S_1} &:= L_{\mathrm S_1} \Big(\widehat C_1\big(L_{\mathcal D_1}+1\big) +  L_{\rm fbk} \Big)\,,\quad
\widetilde{L}_{\mathrm S_2} := L_{\mathrm S_2} \widehat C_2\big(L_{\mathcal D_2}+1\big)\,,\\[1ex]
\widetilde{L}_{\bS,\epsilon} &:= L_{\bS} \Big( \big(L_\mu+\delta_4L_{\mbf K}\big)\dataflow(\epsilon) + L_{\mbf f} \Big)\,.
\end{aligned}
\end{align*}
Given $(\varphi_i,\vartheta_i)\in\mathcal W_\epsilon$, $i\in\{1,2\}$, we set $\bB u_i:=\bS(\varphi_i,\vartheta_i)$,  $\phi_i := \mathrm S_1(\varphi_i,\bB u_i)$, and  $\theta_i:=\mathrm S_2(\phi_i,\bB u_i)$. Thus, $\mbf T(\varphi_i,\vartheta_i)=(\phi_i,\theta_i)$. By Lemma~\ref{lemma:selfmapping}, $\bB u_i$ satisfies the smallness conditions required in Lemmas~\ref{lemma:LC:mass} and \ref{lemma:LC:heat}. Since $(\varphi_2,\vartheta_2)\in\mathcal W_\epsilon$, there holds $\|\varphi_2\|_{0,\rho;\O} + \|\vartheta_2\|_{0,\rho;\O} \leq \epsilon$, and from Lemma~\ref{lemma:LC:flow}, we have
\begin{align}
\|\bB u_1-\bB u_2\|_{0,r;\O} \leq \widetilde{L}_{\bS,\epsilon}\|(\varphi_1,\vartheta_1) - (\varphi_2,\vartheta_2)\|\,.\label{eq:LC:T:flow}
\end{align}
In turn, Lemma~\ref{lemma:LC:mass} and \eqref{eq:LC:T:flow} imply
\begin{align}
\|\phi_1-\phi_2\|_{0,\rho;\O} \leq \widetilde{L}_{\mathrm S_1}\big(1+\widetilde{L}_{\bS,\epsilon}\big)\|(\varphi_1,\vartheta_1) - (\varphi_2,\vartheta_2)\|\,.\label{eq:LC:T:mass}
\end{align}
Similarly, using Lemma~\ref{lemma:LC:heat}, \eqref{eq:LC:T:flow}, and \eqref{eq:LC:T:mass}, we obtain
\begin{align}
\|\theta_1-\theta_2\|_{0,\rho;\O} \leq \widetilde{L}_{\mathrm S_2} \Big\{ \widetilde{L}_{\mathrm S_1} \big(1 + \widetilde{L}_{\bS,\epsilon}\big) + \widetilde{L}_{\bS,\epsilon}\Big\}
\|(\varphi_1,\vartheta_1) - (\varphi_2,\vartheta_2)\|\,.\label{eq:LC:T:heat}
\end{align}
Adding \eqref{eq:LC:T:mass} and \eqref{eq:LC:T:heat}, and defining
\begin{align}
L_{\mbf T} := \widetilde{L}_{\mathrm S_1} \big(1+\widetilde{L}_{\bS,\epsilon}\big) + \widetilde{L}_{\mathrm S_2}
\Big\{\widetilde{L}_{\mathrm S_1}\big(1 + \widetilde{L}_{\bS,\epsilon}\big) + \widetilde{L}_{\bS,\epsilon}\Big\}\,,\label{def:LC:T}
\end{align}
we obtain \eqref{eq:LC:T}, and thus conclude the proof.
\end{proof}
\begin{theorem}\label{thm:EU}
Let $(\rho,\varrho)$ and $(r,s)$ be pairs of conjugate exponents satisfying \eqref{def:ranges}. Assume that the regularity hypothesis~\textbf{(H)} holds and that, for some $\epsilon>0$, the data satisfy \eqref{eq:selfmap:cs} and
\begin{align}
&L_{\mbf T} < 1\,,\label{eq:bound:LC:T}
\end{align}
where $L_{\mbf T}$ is defined in \eqref{def:LC:T}. Then, the operator $\mbf T$, defined in \eqref{def:T}, has a unique fixed point $(\phi,\theta)\in\mathcal W_\epsilon$. Equivalently, the fully coupled problem \eqref{syst:weak:mass}, \eqref{syst:weak:heat}, and
\eqref{syst:weak:flow} has a unique solution
$(\vtmass,\vec{\bB{\phi}})\in\bH_1\times\bQ_1$, $(\vtheat,\theta)\in\bH_2\times\L^\rho(\O)$, and $(\vec{\bB{\zeta}},\vec{\bB{u}})
\in\mX\times\mbb M$, whose concentration-temperature component satisfies $(\phi,\theta)\in\mathcal W_\epsilon$. Moreover,
\begin{align}
\|(\vtmass,\vec{\bB{\phi}})\|       \leq C_{\mathrm S_1} \datacons\,, \qquad
\|(\vtheat,\theta)\|                \leq C_{\mathrm S_2} \dataheat\,,\qquad
\|(\vec{\bB{\zeta}},\vec{\bB{u}})\| \leq C_{\bS}\dataflow(\epsilon)\,.\label{eq:thm:aprioris}
\end{align}
\end{theorem}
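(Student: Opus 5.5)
The plan is to apply Banach's fixed-point theorem to $\mbf T$ restricted to the closed ball $\mathcal W_\epsilon$. First, $\mathcal W_\epsilon$ is a closed subset of the Banach space $(\L^\rho(\O))^2$ equipped with the norm $\|\varphi\|_{0,\rho;\O}+\|\vartheta\|_{0,\rho;\O}$. It is therefore a complete metric space.

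Under \eqref{eq:selfmap:cs}, Lemma~\ref{lemma:selfmapping} gives $\mbf T(\mathcal W_\epsilon)\subseteq\mathcal W_\epsilon$. Lemma~\ref{lemma:LC:T} gives the Lipschitz bound \eqref{eq:LC:T} on $\mathcal W_\epsilon$, and together with \eqref{eq:bound:LC:T} this makes $\mbf T|_{\mathcal W_\epsilon}$ a contraction. Banach's theorem then yields a unique $(\phi,\theta)\in\mathcal W_\epsilon$ with $\mbf T(\phi,\theta)=(\phi,\theta)$.

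Next I recover the full solution from the fixed point. I set $\bB u:=\bS(\phi,\theta)$, and let $(\vec{\bB\zeta},\vec{\bB u})$ be the unique solution of \textbf{(CP.3)} with data $(\phi,\theta)$, given by Lemma~\ref{lemma:solv:flow}. By the proof of Lemma~\ref{lemma:selfmapping}, $\|\bB u\|_{0,r;\O}\le C_{\bS}\dataflow(\epsilon)\le\frac12\min\{\alpha_{\bB A},\alpha_{\mfk A}\}$. Hence Lemmas~\ref{lemma:solv:mass} and \ref{lemma:solv:heat} apply with $\bB w=\bB u$. They give unique solutions $(\vtmass,\vec{\bB\phi}^*)$ of \textbf{(CP.1)} with $(\varphi,\bB w)=(\phi,\bB u)$, and $(\vtheat,\theta^*)$ of \textbf{(CP.2)} with $(\varphi,\bB w)=(\phi^*,\bB u)$.

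The fixed-point identity $\mbf T(\phi,\theta)=(\phi,\theta)$ says exactly that the first component $\phi^*$ of $\vec{\bB\phi}^*$ equals $\phi$. Substituting this into the heat problem then gives $\theta^*=\theta$. So the three pairs solve \eqref{syst:weak:coupled}.

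Conversely, take any solution of \eqref{syst:weak:coupled} with $(\phi,\theta)\in\mathcal W_\epsilon$. Its velocity satisfies $\bB u=\bS(\phi,\theta)$ by uniqueness in Lemma~\ref{lemma:solv:flow}, so the smallness bound above holds for it. Uniqueness in Lemmas~\ref{lemma:solv:mass} and \ref{lemma:solv:heat} then shows that $(\phi,\theta)$ is a fixed point of $\mbf T$. This gives uniqueness of the coupled solution within $\mathcal W_\epsilon$, and with it the stated equivalence.

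The estimates \eqref{eq:thm:aprioris} follow directly from the a priori bounds \eqref{eq:apriori:mass} and \eqref{eq:apriori:heat}, which are independent of $(\varphi,\bB w)$. For the flow part I use \eqref{eq:apriori:flow} together with monotonicity of $\dataflow$ and $\|(\phi,\theta)\|\le\epsilon$.

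The only delicate point is the bookkeeping in the converse direction. There I must check that the smallness of the velocity required by Lemmas~\ref{lemma:solv:mass} and \ref{lemma:solv:heat} is automatic, not an extra assumption. It holds because $(\phi,\theta)\in\mathcal W_\epsilon$ and \eqref{eq:selfmap:cs} holds, which is why uniqueness is asserted only within $\mathcal W_\epsilon$.
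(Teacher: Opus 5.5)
Your proof is correct and follows the paper's argument: you apply Banach's fixed-point theorem on the complete set $\mathcal W_\epsilon$ via Lemmas~\ref{lemma:selfmapping} and \ref{lemma:LC:T}, then recover the remaining components and the bounds \eqref{eq:thm:aprioris} from the decoupled well-posedness lemmas. Your explicit check of the equivalence, in both directions, spells out what the paper asserts in one line. In particular you verify that the velocity smallness required by Lemmas~\ref{lemma:solv:mass} and \ref{lemma:solv:heat} follows from $(\phi,\theta)\in\mathcal W_\epsilon$ and \eqref{eq:selfmap:cs}.
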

\begin{proof}
The first two smallness conditions imply, by Lemma~\ref{lemma:selfmapping}, that $\mbf T(\mathcal W_\epsilon)\subseteq\mathcal W_\epsilon$. Furthermore, Lemma~\ref{lemma:LC:T} and \eqref{eq:bound:LC:T} show that $\mbf T$ is a contraction on $\mathcal W_\epsilon$. Since $\mathcal W_\epsilon$ is a closed subset of the Banach space $\L^\rho(\O)\times\L^\rho(\O)$, it is complete. Therefore, Banach's fixed-point theorem provides a unique fixed point $(\phi,\theta)\in\mathcal W_\epsilon$. The existence and uniqueness of the remaining components follow from the well-posedness of the three decoupled problems defining $\mathrm S_1$, $\mathrm S_2$, and $\bS$. Finally,
\eqref{eq:thm:aprioris}
follows from \eqref{eq:apriori:mass},
\eqref{eq:apriori:heat}, and \eqref{eq:apriori:flow},
respectively.
\end{proof}

\section{Discrete formulation and solvability analysis} \label{sec:DF:WP}

\subsection{Discrete formulation} \label{sec:DF}

In this section, we introduce the finite element spaces and discrete formulations for the decoupled systems \eqref{syst:weak:mass}, \eqref{syst:weak:heat}, and \eqref{syst:weak:flow}, as well as the scheme for the resulting fully coupled system. Although the continuous analysis is valid for $d\in\{2,3\}$, throughout this section we restrict the discrete analysis to the two-dimensional case $d=2$.

To begin with, given $h>0$, we let $\mathcal{T}_h$ be a regular triangulation of $\Omega$ composed of triangles $K\in\mathcal{T}_h$. The parameter $h$ denotes the mesh size and is defined as the largest diameter of the elements of $\mathcal{T}_h$. Over each element $K\in\mathcal{T}_h$, we denote by $\mathcal{P}_{\ell}(K)$ the space of polynomials of degree less than or equal to $\ell$. The space of homogeneous polynomials of degree exactly $\ell$ is denoted by
$\bar{\mathcal{P}}_\ell(K)$. The corresponding global polynomial spaces over $\O$ are then defined as
\begin{align*}
 \mathcal{P}_{\ell}(\O):= \Big\{\varphi_h\in \L^2(\O): \quad \varphi_h|_K\in \mathcal{P}_\ell(K)\quad \forall K\in \mathcal{T}_h  \Big\}\,.
\end{align*}
The vector and tensor versions of this global polynomial space, extended by components, are denoted by $\bBc{P}_{\ell}(\Omega)$ and $\mbb{P}_{\ell}(\O)$, respectively. The local Raviart-Thomas space of polynomial degree $\ell$ is defined by $\mbf{RT}_{\ell}(K) := \bBc{P}_{\ell}(K)\oplus \bar{\mathcal{P}}_{\ell}(K)\bB{x}$, for $\bB{x}\in \O$. The global $H(\divr)$-conforming extension of this space over $\O$ and its matrix version are
\begin{align*}
 \mbf{RT}_{\ell}(\O) &:= \Big\{\bB{w}_h\in \bH(\divr;\O): \quad \bB{w}_h|_K\in  \mbf{RT}_{\ell}(K)\quad \forall K\in \mathcal{T}_h\Big\}\,,\\[1ex]
 \mbb{RT}_{\ell}(\O) &:= \Big\{\bB{\tau}_h\in \mbb{H}(\bdiv;\O): \quad \bB{\tau}_{h,i}\in  \mbf{RT}_{\ell}(\O)\quad \forall i\in \{1,2\}\Big\}\,,
\end{align*}
where the notation $\bB{\tau}_{h,i}$ denotes the $i$-th row of the tensor $\bB{\tau}_h$. Now, before introducing the finite-dimensional spaces of our discrete formulation, we remark the following inclusions for the piecewise polynomial spaces here introduced:
\begin{align*}
 \mathcal{P}_\ell(\O)\subseteq \L^j(\O)\quad \text{and}\quad \mbf{RT}_\ell(\O)\subseteq \bH(\divr_j;\O) \qquad  \text{for}\quad j\in[1,+\infty)\,,
\end{align*}
which can be straightforwardly extended to $\bBc{P}_{\ell}(\O)$, $\mbb{P}_{\ell}(\O)$ and $\mbb{RT}_{\ell}(\O)$.
Then, the functional spaces arising in the weak formulations \eqref{syst:FP:weak:mass} and  \eqref{syst:FP:weak:heat} are discretized as described below
\begin{alignat*}{2}
\bH_{1,h} &:= \bBc{P}_{\ell}(\O) \times \mbf{RT}_{\ell}(\O)\,,
&\qquad
\bQ_{1,h} &:= {\rm Q}_h \times \widehat{\mathcal Q}_h\,, \\
\bH_{2,h} &:= \bBc{P}_{\ell}(\O) \times \Big(\mbf{RT}_{\ell}(\O)\cap\bH_{\Gamma_{\rm N}}(\div_{\varrho};\O)\Big)\,,
&\qquad
{\rm Q}_{2,h} &:= {\rm Q}_h\,,
\end{alignat*}
where ${\rm Q}_{h}:=\mathcal{P}_{\ell}(\O)$ is the discrete space employed for the approximation of the scalar unknowns in $L^{\rho}(\O)$. In addition, let $\mathcal{E}_{h}^{\Gamma}$ denote the partition of $\Gamma$ induced by the triangulation $\mathcal{T}_h$. We assume that the
boundary edges contained in each straight segment of $\Gamma$ can be grouped into pairs. Then, $\mathcal{E}_{2h}$ denotes the coarsened
partition of $\Gamma$ obtained by joining each pair of adjacent and collinear edges of $\mathcal{E}_{h}^{\Gamma}$. Following \cite[Section 4.4]{Gatica2014}, we define $\widehat{\mathcal{Q}}_{h}\subseteq {\rm H}^{1/2}(\Gamma)$ by
\begin{align*}
\widehat{\mathcal{Q}}_{h} := \Big\{\chi_h\in {\rm C}(\Gamma):\quad \chi_h|_e\in\mathcal{P}_1(e) \quad\forall\,e\in\mathcal{E}_{2h}\Big\}\,.
\end{align*}
Notice, in particular, that the constant functions belong to $\widehat{\mathcal{Q}}_h$. For the flow equations \eqref{syst:FP:weak:flow}, we employ the Arnold--Falk--Winther (AFW) elements \cite[Section 4.4.3]{Gatica2023}, therefore we set
\begin{alignat*}{2}
\mX_h &:=
\big(\mbb{P}_{\ell+1}(\O)\cap\Ltwotr\big)\times\left(\mbb{P}_{\ell+1}(\O)\cap\Hdivtr\right),\\[1ex]
\mbb M_h
&:=
\bBc{U}_h\times \left(\mbb{P}_{\ell}(\O)\cap\mbb L_{\tt skew}^2(\O)\right)\,,\qan \bBc U_h:=\bBc P_\ell(\O)\,.
\end{alignat*}
Alternatively, the PEERS-based spaces described in \cite[Section 4.4.2]{Gatica2023} and \cite{Gatica2021} can also be employed.

We next introduce the discrete solution operators $\mathrm{S}_{1,h},\mathrm{S}_{2,h}:{\rm Q}_h\times\bBc{U}_h\to {\rm Q}_h$ and $\bS_h: {\rm Q}_h\times{\rm Q}_h \to \bBc U_h$ which for $\varphi_h,\vartheta_h\in {\rm Q}_h $ and $\bB{w}_h\in\bBc{U}_h$ are given by
\begin{align*}
 \textrm{S}_{1,h}(\varphi_h,\bB{w}_h) = \phi_h\,,\qquad \textrm{S}_{2,h}(\varphi_h,\bB{w}_h) = \theta_h\,,\qquad \bS_h(\varphi_h,\vartheta_h) = \bB{u}_h\,,
\end{align*}
where $\phi_h\in {\rm Q}_h$, $\theta_h\in {\rm Q}_h$ and $\bB{u}_h\in \bBc{U}_h$ are components of the unique solutions (to be confirmed below) to the respective discretized formulations \eqref{syst:FP:weak:mass}, \eqref{syst:FP:weak:heat} and \eqref{syst:FP:weak:flow} given by:
\begin{itemize}[leftmargin=\widthof{\textbf{\quad(DP.3)} }]
 \item[\textbf{(DP.1)}] Find $(\vtmass_h,\vec{\bB{\phi}}_h)\in \bH_{1,h}\times \bQ_{1,h}$ such that:
\begin{equation}
\label{syst:D:FP:weak:mass}
\begin{array}{cclclcll}
a_{\varphi_h}\big(\vtmass_h,\vsmass_h\big) & + & b\big(\vsmass_h,\vec{\bB\phi}_h\big) & - & \widetilde b_{\bB w_h} \big(\vsmass_h,\vec{\bB\phi}_h\big)
&=& \mathcal F_{\varphi_h}(\vsmass_h) &\qquad \forall\,\vsmass_h\in\bH_{1,h}\,,\\[1ex]
b\big(\vtmass_h,\vec{\bB\psi}_h\big) & - & c\big(\vec{\bB\phi}_h,\vec{\bB\psi}_h\big) &&
&=& \mathcal G(\vec{\bB\psi}_h) &\qquad \forall\,\vec{\bB\psi}_h\in\bQ_{1,h}\,.
\end{array}
\end{equation}
\item[\textbf{(DP.2)}] Find $(\vtheat_h,\theta_h)\in \bH_{2,h}\times {\rm Q}_h$ such that:
\begin{equation}
\label{syst:D:FP:weak:heat}
\begin{array}{cclclcll}
\mfk a_{\varphi_h}\big(\vtheat_h,\vsheat_h\big) & + & \mfk b\big(\vsheat_h,\theta_h\big) & - & \mfk{\widetilde b}_{\bB w_h} \big(\vsheat_h,\theta_h\big)
& = & \mfk F(\vsheat_h) &\qquad \forall\,\vsheat_h\in\bH_{2,h}\,, \\[1ex]
\mfk b\big(\vtheat_h,\varkappa_h\big) & - & \mfk c(\theta_h,\varkappa_h) &&
& = & \mfk G(\varkappa_h) &\qquad \forall\,\varkappa_h\in{\rm Q}_h\,.
\end{array}
\end{equation}
\item[\textbf{(DP.3)}] Find $(\vec{\bB{\zeta}}_h,\vec{\bB{u}}_h)\in \mX_h\times \mbb{M}_h$ such that:
\begin{equation} \label{syst:D:FP:weak:flow}
\begin{array}{ccccll}
\mbf{a}_{\varphi_h,\vartheta_h}(\vec{\bB{\zeta}}_h,\vec{\bB{\eta}}_h) & + & \mbf{b}(\vec{\bB{\eta}}_h,\vec{\bB{u}}_h) & = & \mbf{G}(\vec{\bB\eta}_h) &\qquad \forall\,\vec{\bB{\eta}}_h\in \mX_h\,,\\[1ex]
\mbf{b}(\vec{\bB{\zeta}}_h,\vec{\bB{w}}_h) &-& \mbf{c}_{\varphi_h}(\vec{\bB{u}}_h,\vec{\bB{w}}_h) & = & \mbf{F}_{\varphi_h,\vartheta_h}(\vec{\bB{w}}_h) & \qquad \forall\, \bB{w}_h\in \mbb{M}_h\,.
\end{array}
\end{equation}
\end{itemize}
The fully coupled formulation composed of the discrete problems \textbf{(DP.1)}, \textbf{(DP.2)} and \textbf{(DP.3)} is established as follows: Find the three pairs $(\vec{\bB{r}}_h,\vec{\bB{\phi}}_h)\in \bH_{1,h}\times\mbf{Q}_{1,h}$, $(\vec{\bB{t}}_h,\theta_h)\in \bH_{2,h}\times {\rm Q}_{h}$, and $(\vec{\bB{\zeta}}_h,\vec{\bB{u}}_h)\in \mbb{X}_h\times \mbb{M}_h$ such that
\begin{equation}
\label{syst:D:weak:coupled}
\begin{array}{cclclcll}
a_{\phi_h}\big(\vtmass_h,\vsmass_h\big) & + & b\big(\vsmass_h,\vec{\bB\phi}_h\big) & - & \widetilde b_{\bB u_h} \big(\vsmass_h,\vec{\bB\phi}_h\big)
& = & \mathcal F_{\phi_h}(\vsmass_h) & \quad \forall\,\vsmass_h\in\bH_{1,h}\,,\\[1ex]
b\big(\vtmass_h,\vec{\bB\psi}_h\big) & - & c\big(\vec{\bB\phi}_h,\vec{\bB\psi}_h\big) &&
& = & \mathcal G(\vec{\bB\psi}_h)    & \quad \forall\,\vec{\bB\psi}_h\in\bQ_{1,h}\,,\\[1ex]
\mfk a_{\phi_h} \big(\vtheat_h,\vsheat_h\big) & + & \mfk b\big(\vsheat_h,\theta_h\big) & - & \mfk{\widetilde b}_{\bB u_h} \big(\vsheat_h,\theta_h\big)
& = & \mfk F(\vsheat_h) &\quad \forall\,\vsheat_h\in\bH_{2,h}\,,\\[1ex]
\mfk b\big(\vtheat_h,\varkappa_h\big) & - & \mfk c(\theta_h,\varkappa_h) &&
& = & \mfk G(\varkappa_h) &\quad \forall\,\varkappa_h\in{\rm Q}_h\,, \\[1ex]
\mbf a_{\phi_h,\theta_h} \big(\vec{\bB\zeta}_h,\vec{\bB\eta}_h\big) & + & \mbf b\big(\vec{\bB\eta}_h,\vec{\bB u}_h\big) &&
& = & \mbf G(\vec{\bB\eta}_h) &\quad \forall\,\vec{\bB\eta}_h\in\mX_h\,,\\[1ex]
\mbf b\big(\vec{\bB\zeta}_h,\vec{\bB w}_h\big) & - & \mbf c_{\phi_h} \big(\vec{\bB u}_h,\vec{\bB w}_h\big) &&
& = & \mbf F_{\phi_h,\theta_h} \big(\vec{\bB w}_h\big) &\quad \forall\,\vec{\bB w}_h\in\mbb M_h\,.
\end{array}
\end{equation}
The associated discrete fixed-point operator $\mbf{T}_h:({\rm Q}_h)^2\to ({\rm Q}_h)^2$ is defined by
\begin{equation}
\label{def:D:T}
\mbf T_h(\varphi_h,\vartheta_h) := \Big(\mathrm S_{1,h}\big(\varphi_h,\bS_h(\varphi_h,\vartheta_h)\big),
\mathrm S_{2,h}\Big(\mathrm S_{1,h}\big(\varphi_h,\bS_h(\varphi_h,\vartheta_h)\big),\bS_h(\varphi_h,\vartheta_h)\Big)\Big)\,.
\end{equation}

\subsection{Solvability of the decoupled discrete problems}
\label{sec:DF:WP:decoupled:problems}

We now proceed as in Section~\ref{sec:CF:WP:concentration} to establish the existence and uniqueness of the solutions to the decoupled discrete problems \eqref{syst:D:FP:weak:mass}, \eqref{syst:D:FP:weak:heat}, and \eqref{syst:D:FP:weak:flow}, respectively. We begin with the discrete linearized concentration problem \eqref{syst:D:FP:weak:mass}. As in the continuous setting, the bilinear forms $a_{\varphi_h}$ and $c$ are positive semidefinite, with bounds uniform with respect to $\varphi_h$ and $h$, and $c$ is symmetric. We introduce the discrete zero-mean subspace $\bQ_{1,h,0} := \big({\rm Q}_h\cap\L^\rho_0(\O)\big)\times\widehat{\mathcal Q}_h$.
Since the constant functions belong to both ${\rm Q}_h$ and $\widehat{\mathcal Q}_h$, we have $\vec{\bB e}:=(1,-1)\in\bQ_{1,h}$. Moreover, there holds the direct decomposition
\begin{align*}
\bQ_{1,h} = \bQ_{1,h,0}\oplus {\rm span}\{\vec{\bB e}\}\,.
\end{align*}
We next consider the kernel of the discrete $b$ restricted to $\bQ_{1,h,0}$, namely,
\begin{align*}
\mathcal{V}_h
& := \Big\{\vsmass_h=(\bB s_h,\bB\varsigma_h)\in\bH_{1,h}: \quad b(\vsmass_h,\vec{\bB\psi}_{0,h})=0 \quad \forall\,\vec{\bB\psi}_{0,h}\in\bQ_{1,h,0} \Big\} = \bBc{U}_h \times \mathcal V_{0,h}\,, \\
\mathcal{V}_{0,h}
& := \Big\{\bB\varsigma_h\in\mbf{RT}_\ell(\O):\quad\div(\bB\varsigma_h)=0\ \qin\O,\quad
\langle\bB\varsigma_h\cdot\bB\nu,\chi_h\rangle_\Gamma=0 \quad \forall\,\chi_h\in\widehat{\mathcal Q}_h \Big\}\,.
\end{align*}
The above characterization follows from $\div(\mbf{RT}_\ell(\O))\subseteq{\rm Q}_h$, and the fact that $\div(\mbf{RT}_\ell(\O))$ is orthogonal to ${\rm Q}_h\cap\L^\rho_0(\O)$, which also implies that $\div(\bB\varsigma_h)$ is constant in $\O$.

Then, for each $\vec{\bB\phi}_h,\vec{\bB{\psi}}_h\in\bQ_{1,h}$, there exist $\vec{\bB{\phi}}_{0,h},\vec{\bB{\psi}}_{0,h}\in\bQ_{1,h,0}$ such that $\vec{\bB\phi}_h = \vec{\bB{\phi}}_{0,h} + m_{\phi_h}\vec{\bB e}$ and $\vec{\bB{\psi}}_h = \vec{\bB{\psi}}_{0,h} + m_{\psi_h}\vec{\bB e}$, with $m_{\phi_h}$ and $m_{\psi_h}$ the mean of $\phi_h$ and $\psi_h$, respectively.
Given $\varphi_h\in Q_h$, we consider the bilinear form $\bB{A}_{\varphi_h}:(\bH_{1,h}\times\bQ_{1,h})^2\mapsto \R$ given by
\begin{align*}
\bB{A}_{\varphi_h} \big( (\vtmass_h,\vec{\bB{\phi}}_h), (\vsmass_h,\vec{\bB{\psi}}_h) \big)
:={}&
a_{\varphi_h}(\vtmass_h,\vsmass_h) + b(\vsmass_h,\vec{\bB{\phi}}_h) + b(\vtmass_h,\vec{\bB{\psi}}_h) - c(\vec{\bB{\phi}}_h,\vec{\bB{\psi}}_h)\,,
\end{align*}
for all $\vtmass_h,\vsmass_h\in\bH_{1,h}$. Similarly to \eqref{eq:A:decomp:mass}, we have
\begin{align*}
\bB A_{\varphi_h}\big((\vtmass_h,\vec{\bB\phi}_h),(\vsmass_h,\vec{\bB\psi}_h)\big)= \bB A_{\varphi_h,0}\big((\vtmass_h,\vec{\bB\phi}_{0,h}),(\vsmass_h,\vec{\bB\psi}_{0,h})\big) - \delta_1|\O|m_{\phi_h}m_{\psi_h}\,.
\end{align*}
where $\bB A_{\varphi_h,0}$ is the bilinear form restriction of $\bB A_{\varphi_h}$ to the subspace $(\bH_{1,h}\times\bQ_{1,h,0})^2$. The inf-sup conditions related to $\bB A_{\varphi_h,0}$ can be established by analysing $a_{\varphi_h}$, $b$ and $c$. For the discrete inf-sup conditions related to $a_{\varphi_h}$, we consider the bilinear forms $\bar a_{\varphi_h}$, $\bar b_1$, and $\bar b_2$ introduced in the proof of Lemma~\ref{lemma:infsup:a:mass}. The $\bL^2(\O)$-ellipticity of $\bar a_{\varphi_h}$ is naturally inherited in the discrete subspace $\bBc P_\ell(\O)$. In addition, the discrete inf-sup properties of $\bar b_1$ and $\bar b_2$
follow from \cite[eqn.~(2.22)]{Bernardi1988}. Consequently, there exists a constant $\widetilde\alpha_1>0$, independent of $h$ and
$\varphi_h$, such that the discrete analogues of \eqref{eq:IS:a:mass} hold on $\mathcal V_h$. For the discrete inf-sup condition related to $b$, we combine the discrete divergence stability of the Raviart--Thomas
spaces with the macro-edge construction of $\widehat{\mathcal Q}_h$ (cf.~\cite[Lemma 4.5]{Gatica2021b} and \cite[Section 4.4]{Gatica2014}), and conclude that there exists a constant $\widetilde\beta_1>0$, independent of $h$, such that
\begin{align*}
\sup_{\substack{\vsmass_h\in\bH_{1,h}\setminus\{\vec{\bB{0}}\}}} \frac{b(\vsmass_h,\vec{\bB\psi}_{0,h})}{\|\vsmass_h\|_{\bH_1}}
\geq
\widetilde\beta_1 \|\vec{\bB\psi}_{0,h}\|_{\bQ_1} \qquad \forall\,\vec{\bB\psi}_{0,h}\in\bQ_{1,h,0}\,.
\end{align*}
Thus, the discrete inf-sup conditions for $a_{\varphi_h}$ and $b$, together with the positive semi-definiteness of $a_{\varphi_h}$ and $c$, allow us to apply \cite[Theorem 4.1]{Gatica2023} to conclude that $\bB A_{\varphi_h,0}$ satisfies the two global inf-sup conditions on $\bH_{1,h}\times\bQ_{1,h,0}$ with constant $\widetilde\alpha_{\bB A,0}>0$, independent of $h$ and $\varphi_h$.

Then, following the same reasoning as in Section~\ref{sec:CF:concentration}, for $\bB{w}_h\in \bBc{U}_h$, such that $\|\bB w_h\|_{0,r;\O} \leq \tfrac{1}{2}\widetilde\alpha_{\bB A}$, the perturbed global form $\bB A_{\varphi_h,\bB w_h}$ satisfies global discrete inf-sup conditions on each of its two arguments, with constant $\widetilde\alpha_{\bB A}/2$. Thus, we can state the well-definedness of $\mathrm S_{1,h}$ in the following lemma.

\begin{lemma}
\label{lemma:D:solv:mass}
Let $r,\rho$ and their conjugate exponents $s,\varrho$ satisfy the conditions in \eqref{def:ranges}, together with $1/\rho+1/r=1/2$. Then, for each $(\varphi_h,\bB w_h)\in{\rm Q}_h\times\bBc U_h$ satisfying $\|\bB w_h\|_{0,r;\O} \leq \frac{\widetilde\alpha_{\bB A}}{2}$, there exists a unique $(\vtmass_h,\vec{\bB\phi}_h)\in\bH_{1,h}\times\bQ_{1,h}$ solving \eqref{syst:D:FP:weak:mass}. Consequently,
$\mathrm S_{1,h}$ is well-defined by $\mathrm S_{1,h}(\varphi_h,\bB w_h):=\phi_h$.
Moreover, there exists a constant $\widetilde C_{\mathrm S_1}>0$, independent of $\varphi_h$, $\bB w_h$, and $h$, such that
\begin{align}
\|\mathrm S_{1,h}(\varphi_h,\bB w_h)\|_{0,\rho;\O}
=     \|\phi_h\|_{0,\rho;\O} \leq \|\vtmass_h\|_{\bH_1} +  \|\vec{\bB\phi}_h\|_{\bQ_1}
\leq  \widetilde C_{\mathrm S_1}\datacons\,.\label{eq:D:a priori:mass}
\end{align}
\end{lemma}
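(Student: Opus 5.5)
The plan is to mirror the proof of Lemma~\ref{lemma:solv:mass} in the finite-dimensional setting, using the discrete global inf-sup conditions assembled just before the statement. First I will lift the inf-sup conditions of $\bB A_{\varphi_h,0}$ on $\bH_{1,h}\times\bQ_{1,h,0}$ (constant $\widetilde\alpha_{\bB A,0}$, uniform in $h$ and $\varphi_h$) to the full space $\bH_{1,h}\times\bQ_{1,h}$. For this I use the decomposition $\bQ_{1,h}=\bQ_{1,h,0}\oplus{\rm span}\{\vec{\bB e}\}$ and the identity $\bB A_{\varphi_h}=\bB A_{\varphi_h,0}-\delta_1|\O|m_{\phi_h}m_{\psi_h}$. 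Given a trial pair, I pick the (near-)maximizing test pair $(\vsmass_h,\vec{\bB\psi}_{0,h})$ for $\bB A_{\varphi_h,0}$ and set $\vec{\bB\psi}_{h,{\rm ext}}:=\vec{\bB\psi}_{0,h}-m_{\phi_h}\vec{\bB e}$, which lies in $\bQ_{1,h}$ because $\vec{\bB e}\in\bQ_{1,h}$. Repeating \eqref{eq:global:test:lower:mass}--\eqref{eq:GIS:mass:aux:1} verbatim then gives a discrete global inf-sup constant $\widetilde\alpha_{\bB A}:=\min\{\tfrac12\widetilde\alpha_{\bB A,0},\delta_1|\O|\}/C_{\rm ext}^2$. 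Here $C_{\rm ext}$ depends only on $\|\vec{\bB e}\|_{\bQ_1}$, so it is $h$-independent. The second-argument condition follows symmetrically.

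Next I treat $\widetilde b_{\bB w_h}$ as a perturbation. By \eqref{eq:bounded:mass}, $|\widetilde b_{\bB w_h}(\vsmass_h,\vec{\bB\phi}_h)|\le\|\bB w_h\|_{0,r;\O}\|\vsmass_h\|_{\bH_1}\|\vec{\bB\phi}_h\|_{\bQ_1}$, and this bound uses only H\"older with $1/\rho+1/r=1/2$. Hence, under $\|\bB w_h\|_{0,r;\O}\le\widetilde\alpha_{\bB A}/2$, the perturbed form $\bB A_{\varphi_h,\bB w_h}$ satisfies the inf-sup condition with constant $\widetilde\alpha_{\bB A}/2$, exactly as in \eqref{eq:GIS:mass:1}.

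Since the spaces are finite dimensional, a single inf-sup condition already gives injectivity of the square linear system and hence existence and uniqueness. Alternatively, I can invoke \cite[Theorem 2.6]{Ern2004} with both conditions, and its stability estimate gives $\|\vtmass_h\|_{\bH_1}+\|\vec{\bB\phi}_h\|_{\bQ_1}\le \frac{2}{\widetilde\alpha_{\bB A}}(\|\mathcal F_{\varphi_h}|_{\bH_{1,h}}\|+\|\mathcal G|_{\bQ_{1,h}}\|)$. The restricted functionals have norms bounded by the continuous ones, $\|\mathcal F\|=f_{\max}|\O|^{1/2}$ and $\|\mathcal G\|$, which yields \eqref{eq:D:a priori:mass} with $\widetilde C_{\mathrm S_1}$ depending only on $\O,\rho,\widetilde\alpha_{\bB A}$. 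The inequality $\|\phi_h\|_{0,\rho;\O}\le\|\vec{\bB\phi}_h\|_{\bQ_1}$ is immediate from the definition of the norm.

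The main obstacle is ensuring that every constant is uniform in $h$. In particular, the discrete inf-sup for $b$ on $\bQ_{1,h,0}$ (the macro-edge space $\widehat{\mathcal Q}_h$ together with the Raviart--Thomas divergence stability in the $\L^\rho$/$\div_\varrho$ setting) and the discrete kernel inf-sup for $a_{\varphi_h}$ must hold with constants independent of $h$. One subtlety is that $\mathcal V_{0,h}$ need not be contained in $\mathcal V_0$, since only $\div\bB\varsigma_h$ constant and a weak zero normal trace are enforced. However, $\div(\bB\varsigma_h)$ is constant and $\langle\bB\varsigma_h\cdot\bB\nu,1\rangle_\Gamma=0$ because $1\in\widehat{\mathcal Q}_h$, so the divergence theorem forces $\div\bB\varsigma_h=0$. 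Consequently $\|\bB\varsigma_h\|_{\div_\varrho;\O}=\|\bB\varsigma_h\|_{0;\O}$, and the choices $\bB s_h=-\bB\varsigma_h$, $\tmass_h=\bB\varsigma_h\in\bBc P_\ell(\O)$ are admissible. This justifies taking these uniform inf-sup conditions from the text preceding the statement.
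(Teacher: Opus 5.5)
Your proposal is correct and follows the paper's route. The paper's proof consists of the discussion preceding the statement: the uniform discrete inf-sup conditions for $a_{\varphi_h}$ on $\mathcal V_h$ and for $b$ on $\bQ_{1,h,0}$, the lift through $\bQ_{1,h}=\bQ_{1,h,0}\oplus\mathrm{span}\{\vec{\bB e}\}$ exactly as in Section~\ref{sec:CF:WP:concentration}, and the perturbation bound for $\widetilde b_{\bB w_h}$, followed by an abstract solvability theorem (Brezzi--Fortin rather than Ern--Guermond, which is immaterial).

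Your additional remarks are also correct and fill in steps the paper leaves implicit: that a single inf-sup condition suffices in finite dimensions, and that constant divergence together with $\langle\bB\varsigma_h\cdot\bB\nu,1\rangle_\Gamma=0$ forces $\div\bB\varsigma_h=0$ on $\mathcal V_{0,h}$. The latter makes $\bB\varsigma_h$ piecewise in $\bBc P_\ell$, so the choice $\bB s_h=-\bB\varsigma_h$ is admissible.
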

\begin{proof}
The proof is a direct consequence of the preceding discussion and \cite[Theorem 1.2, Section II.1.2]{Brezzi1991}.
\end{proof}

Next, we observe that the well-posedness of the discrete linearized weak formulation of the heat equation \eqref{syst:D:FP:weak:heat} can be established in a similar way as in the previous lemma. Indeed, the positive semi-definiteness of $\mfk a_{\varphi_h}$ and the symmetry and positive semi-definiteness of $\mfk c$ are inherited by the corresponding discrete spaces. Moreover, it is straightforward to verify that the discrete kernel of $\mfk b$ is given by $\widetilde{\mathcal V}_h := \bBc P_\ell(\O)\times\widetilde{\mathcal V}_{0,h}$, where
\begin{equation*}
\widetilde{\mathcal V}_{0,h}
:= \Big\{\bB\tau_h\in\mbf{RT}_\ell(\O): \quad \div(\bB\tau_h)=0 \qin\O \qan \bB\tau_h\cdot\bB\nu = 0 \qon\Gamma_{\rm N}\Big\}\,.
\end{equation*}
Then, similarly to the discrete inf-sup conditions for $a_{\varphi_h}$, we can employ the same approach to establish the corresponding inf-sup condition for $\mfk{a}_{\varphi_h}$ with positive constant $\alpha_{2,h}$. Furthermore, a slight generalization of \cite[Lemma 4.1]{Caucao2020} allows us to conclude the existence of a $\beta_{2,h}>0$  such that the discrete inf-sup condition for $\mfk{b}$ is satisfied. This results, imply the discrete inf-sup condition for $\mfk{A}_{\varphi_h,\bB{w}_h}$, which is satisfied by each $(\varphi_h,\bB{w}_h)\in {\rm Q}_h \times\mbb{M}_h$, such that $\|\bB{w}_h\|_{0,r;\O} \leq \alpha_{\mfk{A}}/2$, where $\alpha_{\mfk{A}}$ is a positive constant depending only on $\alpha_{2,h}$, $\beta_{2,h}$, $\|\mfk{a}\|$ and $\|\mfk{c}\|$.

In summary, we have the following lemma.

\begin{lemma}\label{lemma:D:solv:heat}
	Let $(\rho,\varrho)$ a pair of conjugate numbers within the range of values in \eqref{def:ranges}.
	Then, for each $(\varphi_h,\bB{w}_h)\in {\rm Q}_h \times\mathbcal{P}_{\ell}(\O)$, such that $\|\bB{w}_h\|_{0,r;\O} \leq \widetilde{\alpha}_{\mfk{A}}/2$, there exists a unique $(\vtheat_h,\theta_h)\in \bH_{2,h}\times{\rm Q}_h$ solution to \eqref{syst:D:FP:weak:heat}, and hence one can define $\mathrm{S}_{2,h}(\varphi_h,\bB{w}_h) = \theta_h$. Moreover, there exists a positive constant $\widetilde{C}_{\mathrm{S}_2}$, independent of $(\varphi_h,\bB{w}_h)$ and $h$, such that
	\begin{align}\label{eq:D:a priori:heat}
		\|\mathrm{S}_{2,h}(\varphi_h,\bB{w}_h)\|  \,=\, \|\theta_h\|_{0,\rho;\O}\,\leq\,\|\vtheat_h\|_{\bH_2} + \|\theta_h\|_{0,\rho;\O}\leq \widetilde{C}_{\mathrm{S}_{2}}\dataheat\,.
	\end{align}
\end{lemma}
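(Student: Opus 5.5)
The argument mirrors the proof of Lemma~\ref{lemma:D:solv:mass} and the continuous Lemma~\ref{lemma:solv:heat}. We place the discrete problem \eqref{syst:D:FP:weak:heat} in the framework of perturbed saddle-point problems on the finite-dimensional spaces $\bH_{2,h}\times{\rm Q}_h$. We then show that the global form $\mfk{A}_{\varphi_h,\bB{w}_h}$, defined as in \eqref{def:A:heat} but restricted to these spaces, satisfies a discrete inf-sup condition with a constant independent of $h$, $\varphi_h$ and $\bB{w}_h$. Unlike the concentration problem, no zero-mean splitting is needed here. The Dirichlet part $\Gamma_{\rm D}$ fixes the constant mode, and $\mfk{b}$ is inf-sup stable on all of ${\rm Q}_h$.

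\textbf{Step 1 (unperturbed form).} The form $\mfk{a}_{\varphi_h}$ is positive semidefinite, since $\mfk{a}_{\varphi_h}(\vsheat_h,\vsheat_h)\geq d_2\|\bB{q}_h\|_{0;\O}^2$. The form $\mfk{c}$ is symmetric and positive semidefinite. Both properties hold with the uniform bounds \eqref{eq:bounded:heat}, since these are inherited from the continuous spaces. On the discrete kernel $\widetilde{\mathcal V}_h=\bBc P_\ell(\O)\times\widetilde{\mathcal V}_{0,h}$ we split $\mfk{a}_{\varphi_h}$ into $\bar a$, $\bar b_1$ and $\bar b_2$, exactly as in Lemma~\ref{lemma:infsup:a:mass}. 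The key observation is that every $\widetilde{\bB\tau}_h\in\widetilde{\mathcal V}_{0,h}$ is a piecewise polynomial of degree $\ell$, so it lies in $\bBc P_\ell(\O)$. We can therefore test $\bar b_1$ and $\bar b_2$ with $\bB{q}_h=\mp\widetilde{\bB\tau}_h$, and we have $\|\widetilde{\bB\tau}_h\|_{\div_\varrho;\O}=\|\widetilde{\bB\tau}_h\|_{0;\O}$. Then \cite[Theorem 2.1]{Bernardi1988}, or its discrete form \cite[eqn.~(2.22)]{Bernardi1988}, yields $\alpha_{2,h}\geq\alpha_2>0$ independent of $h$. For $\mfk{b}$, we take the discrete inf-sup condition with constant $\beta_{2,h}$ from the generalization of \cite[Lemma 4.1]{Caucao2020} cited above.

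\textbf{Step 2 (main obstacle).} The crucial point is that $\beta_{2,h}$ is bounded below uniformly in $h$ in the non-Hilbert setting $\L^\rho\times\bH(\div_\varrho)$ with mixed boundary conditions. I would first try the following construction. Given $\varkappa_h\in{\rm Q}_h$, set $g=|\varkappa_h|^{\rho-2}\varkappa_h\in\L^\varrho(\O)$. Solve the auxiliary mixed Poisson problem $\div\widetilde{\bB\tau}=g$, $\widetilde{\bB\tau}=\nabla z$, with $z=0$ on $\Gamma_{\rm D}$ and $\nabla z\cdot\bB\nu=0$ on $\Gamma_{\rm N}$. This gives $\|\widetilde{\bB\tau}\|_{\div_\varrho;\O}\lesssim\|g\|_{0,\varrho;\O}=\|\varkappa_h\|_{0,\rho;\O}^{\rho-1}$, using the continuous argument of Lemma~\ref{lemma:infsup:b:heat}. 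Next apply the Raviart--Thomas interpolant $\Pi_h$, which preserves the zero normal trace on $\Gamma_{\rm N}$ and satisfies $\div\Pi_h\widetilde{\bB\tau}=P_h g$. Since $\int_\O\varkappa_h P_hg=\int_\O\varkappa_h g=\|\varkappa_h\|_{0,\rho;\O}^\rho$, this gives the inf-sup condition. What remains is stability of $\Pi_h$ in $\bL^2$, which needs the extra regularity $\widetilde{\bB\tau}\in\bH^\delta$, available on polygons.

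\textbf{Step 3 (Babu\v{s}ka--Brezzi and perturbation).} By \cite[Theorem 4.1]{Gatica2023}, $\mfk{A}_{\varphi_h}$ satisfies both global discrete inf-sup conditions with a constant $\widetilde\alpha_{\mfk A}$ that depends only on $\alpha_2$, $\beta_2$, $\|\mfk a\|$ and $\|\mfk c\|$. Since $|\mfk{\widetilde b}_{\bB w_h}(\vsheat_h,\theta_h)|\leq\|\bB w_h\|_{0,r;\O}\|\vsheat_h\|_{\bH_2}\|\theta_h\|_{0,\rho;\O}$, the smallness condition $\|\bB w_h\|_{0,r;\O}\leq\widetilde\alpha_{\mfk A}/2$ gives the perturbed inf-sup condition with constant $\widetilde\alpha_{\mfk A}/2$, exactly as in \eqref{eq:GIS:mass:1}. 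In finite dimensions, one inf-sup condition already implies existence and uniqueness, and \cite[Theorem 1.2, Section II.1.2]{Brezzi1991} gives
\[
\|\vtheat_h\|_{\bH_2}+\|\theta_h\|_{0,\rho;\O}\leq\tfrac{2}{\widetilde\alpha_{\mfk A}}\big(\|\mfk F\|+\|\mfk G\|\big).
\]
The bounds $\|\mfk F\|\lesssim\|\theta_{\rm D}\|_{1/2,\Gamma_{\rm D}}$ and $\|\mfk G\|\lesssim\|l_2\|_{0;\O}$ then give \eqref{eq:D:a priori:heat} with $\widetilde C_{\mathrm S_2}$ independent of $h$, $\varphi_h$ and $\bB w_h$.
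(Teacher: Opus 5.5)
Your proposal is correct and takes essentially the paper's route. The steps match: a uniform inf-sup for $\mfk{a}_{\varphi_h}$ on the discrete kernel via the nested saddle-point argument, a discrete inf-sup for $\mfk{b}$, \cite[Theorem 4.1]{Gatica2023} for the unperturbed form, absorption of $\mfk{\widetilde b}_{\bB{w}_h}$ through $\|\bB{w}_h\|_{0,r;\O}\leq\widetilde{\alpha}_{\mfk{A}}/2$, and \cite[Theorem 1.2, Section II.1.2]{Brezzi1991}.

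The only difference is that you sketch a Fortin-type proof of the $h$-uniform inf-sup for $\mfk{b}$, where the paper just cites a generalization of \cite[Lemma 4.1]{Caucao2020}. When completing that sketch, the $\bL^2$-stability of the Raviart--Thomas interpolant must be shown with $\div(\widetilde{\bB\tau})$ controlled only in $\L^\varrho(\O)$, because $\|g\|_{0;\O}$ is not bounded by $\|g\|_{0,\varrho;\O}$ uniformly in $h$; a local scaling argument gives this with a bounded factor $h_K^{2-2/\varrho}$.
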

\begin{proof}
The proof is a direct consequence of the preceding discussion and \cite[Theorem 1.2, Section II.1.2]{Brezzi1991}.
\end{proof}

Finally, we provide the well-posedness result for the discrete weak formulation of the Stokes equation \eqref{syst:D:FP:weak:flow}, which is based on the scheme provided in \cite{Gatica2023}.
\begin{lemma}\label{lemma:D:solv:flow}
Let $(\rho,\varrho)$ a pair of conjugate numbers within the range of values in \eqref{def:ranges}.
Then, for each $\varphi_h,\vartheta_h\in {\rm Q}_h$, there exists a unique $(\vec{\bB{\zeta}}_h,\vec{\bB{u}}_h)\in \mX_h\times\mbb{M}_h$ solution to \eqref{syst:D:FP:weak:flow}, and hence one can define $\mathbf{S}_h(\varphi_h,\vartheta_h) = \bB{u}_h$. Moreover, there exists a positive constant $\widetilde{C}_{\mathbf{S}}$, independent of $\varphi_h,\vartheta_h$ and $h$, such that
\begin{equation} \label{eq:D:a priori:flow}
\begin{aligned}
\bS_h(\varphi_h,\vartheta_h)=\|\bB{u}_h\|_{0,r;\O}\leq \|\vec{\bB{\zeta}}_h\|_{\mX} + \|\bB{u}_h\|_{\mbb{M}}\,
\leq \widetilde{C}_{\mathbf{S}} \dataflow\big(\|(\varphi_h,\vartheta_h)\|\big)\,.
\end{aligned}
\end{equation}
\end{lemma}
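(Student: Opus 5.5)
The plan mirrors the continuous argument of Lemma~\ref{lemma:solv:flow}, transferred to the AFW spaces $\mX_h\times\mbb M_h$. I will verify the hypotheses of the discrete analogue of \cite[Theorem 3.2]{Gatica2023}, namely \cite[Theorem 4.1]{Gatica2023}, for the discrete global form $\mbb{A}_{\varphi_h,\vartheta_h}$ restricted to $(\mX_h\times\mbb M_h)^2$. Then \cite[Theorem 1.2, Section II.1.2]{Brezzi1991} gives existence, uniqueness and the stability estimate. Boundedness of $\mbf a_{\varphi_h,\vartheta_h}$, $\mbf b$, $\mbf c_{\varphi_h}$, $\mbf G$ and $\mbf F_{\varphi_h,\vartheta_h}$ is inherited from the continuous bounds, since $\mX_h\subset\mX$ and $\mbb M_h\subset\mbb M$. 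The same holds for the positive semidefiniteness of $\mbf a_{\varphi_h,\vartheta_h}$ and $\mbf c_{\varphi_h}$ and the symmetry of $\mbf c_{\varphi_h}$. All of these constants are independent of $h$, $\varphi_h$ and $\vartheta_h$.

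\textbf{Step 1: discrete inf-sup for $\mbf b$.} I need
\[
\sup_{\vec{\bB\eta}_h\in\mX_h\setminus\{\vec{\bB 0}\}}\frac{\mbf b(\vec{\bB\eta}_h,\vec{\bB w}_h)}{\|\vec{\bB\eta}_h\|_{\mX}}\geq\widetilde\beta_3\|\vec{\bB w}_h\|_{\mbb M}
\qquad\forall\,\vec{\bB w}_h\in\mbb M_h .
\]
This is the main obstacle, because the norm on $\bB w_h$ is the $\bL^r$ norm and the norm on $\bB\tau_h$ involves $\bdiv_s$. I would first use a Fortin-type argument: the AFW triple has a commuting projector onto $\mbb{P}_{\ell+1}\cap\mbb H(\bdiv)$. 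Applied to the continuous test tensor built in the proof of Lemma~\ref{lemma:infsup:b:flow}, it should transfer that lemma to the discrete level, provided the projector is $\mbb{W}^{1,s}\to\mbb{H}(\bdiv_s)$-stable uniformly in $h$. Alternatively, since $\bdiv(\mX_h)=\bBc U_h$, I can handle the velocity part directly. For given $\bB w_h$, take $|\bB w_h|^{r-2}\bB w_h$ and project it onto $\bBc U_h$; the $\L^2$-projection is $\L^s$-stable on quasi-uniform meshes. Then realise the result as $\bdiv\bB\tau_h$ with $\|\bB\tau_h\|_{\bdiv_s}$ controlled, using the discrete right inverse of the divergence. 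The skew part $\widetilde{\bB\gamma}_h$ is handled as in \cite[Section 4.4.3]{Gatica2023}. Finally I subtract the trace mean so that $\bB\tau_h\in\Hdivtr$; this is harmless because $\int_\O\bB w_h\cdot\bdiv(\mbb I)=0$ and $\mbb I:\widetilde{\bB\gamma}_h=0$.

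\textbf{Step 2: discrete inf-sup for $\mbf a_{\varphi_h,\vartheta_h}$ on the discrete kernel} $\mathbcal V_h=(\mbb P_{\ell+1}\cap\Ltwotr)\times\mathbcal V_{0,h}$. By the AFW inclusions, tensors in $\mathbcal V_{0,h}$ are divergence free. Weak symmetry is tested only against $\mbb P_\ell\cap\mbb L^2_{\tt skew}$, so exact symmetry is not available, but the argument of Lemma~\ref{lemma:infsup:a:flow} does not use symmetry. I will invoke the inequality $\|\bB\tau\|_{0;\O}\leq C_\rmd(\|\bB\tau^\rmd\|_{0;\O}+\|\bdiv\bB\tau\|_{0,s;\O})$ on $\Hdivtr$, which contains $\mathbcal V_{0,h}$. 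Since $\bB\tau_h^\rmd\in\mbb P_{\ell+1}\cap\Ltwotr$, the choices $\bB\zeta_h=\bB\tau_h^\rmd$ and $\bB\eta_h=-\bB\sigma_h^\rmd$ are admissible discrete test functions. Together with the $\mu_{\min}$-ellipticity of $\mbf{\bar a}_{\varphi_h,\vartheta_h}$, the nested saddle-point argument \cite[Theorem 2.1]{Bernardi1988} gives a constant $\widetilde\alpha_3$ independent of $h$.

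\textbf{Step 3: conclusion.} With Steps 1--2, \cite[Theorem 4.1]{Gatica2023} gives global discrete inf-sup conditions with a constant $\widetilde\alpha_{\mbb A}$ independent of $h$, $\varphi_h$ and $\vartheta_h$. Since the spaces are finite dimensional, the problem has a unique solution. The stability bound $\|(\vec{\bB\zeta}_h,\vec{\bB u}_h)\|\leq\widetilde\alpha_{\mbb A}^{-1}(\|\mbf G\|+\|\mbf F_{\varphi_h,\vartheta_h}\|)$, combined with the bounds on $\mbf G$ and $\mbf F_{\varphi_h,\vartheta_h}$ from Section~\ref{sec:CF:flow}, yields \eqref{eq:D:a priori:flow} with $\widetilde C_{\bS}:=\widetilde\alpha_{\mbb A}^{-1}\max\{1,\|\bB i_r\|\}$ up to the constant of $\mbf f$.
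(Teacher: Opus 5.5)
Your proposal is correct and follows the paper's route. You prove the kernel inf-sup for $\mbf a_{\varphi_h,\vartheta_h}$ explicitly from $\bdiv(\mbb P_{\ell+1}(\O)\cap\Hdivtr)\subseteq\bBc P_\ell(\O)$ and the $\bB\tau^{\rmd}$-inequality, where the paper only invokes the structure of the AFW spaces; you prove a discrete inf-sup for $\mbf b$, which the paper delegates to \cite[Sections 6.2--6.3]{Caucao2025}; then, as in the paper, you apply the abstract perturbed saddle-point theorem of \cite{Gatica2023} and bound $\mbf G$ and $\mbf F_{\varphi_h,\vartheta_h}$ by the data.

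In Step~1, rely on your second route rather than the Fortin one: the canonical commuting projector does not preserve the moments against $\mbb P_\ell(\O)\cap\mbb L^2_{\tt skew}(\O)$, and the continuous test tensor of Lemma~\ref{lemma:infsup:b:flow} need not lie in $\mbb W^{1,s}(\O)$. Also, the $\L^2$-projection onto the discontinuous space $\bBc U_h$ is elementwise, so it is $\L^s$-stable on shape-regular meshes and quasi-uniformity is not needed.
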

\begin{proof}
Let us consider the following kernel related to the discrete bilinear form $\mbf{b}$:
\begin{align*}
\mathbcal V_h
:=
\big\{\vec{\bB\eta}_h\in\mX_h: \quad \mbf b(\vec{\bB\eta}_h,\vec{\bB w}_h)=0 \quad \forall\,\vec{\bB w}_h\in\mbb M_h \big\}\,.
\end{align*}
As in the continuous setting, the bilinear forms $\mbf a_{\varphi_h,\vartheta_h}$ and $\mbf c_{\varphi_h}$ are positive semidefinite. Moreover, the structure of the AFW spaces yields two discrete inf-sup conditions (on each argument) for $\mbf a_{\varphi_h,\vartheta_h}$ on $\mathbcal V_h$, with a constant $\widetilde\alpha_3>0$ independent of $h$, $\varphi_h$, and $\vartheta_h$. In addition, adapting the discrete inf-sup argument in \cite[Sections 6.2--6.3]{Caucao2025} to the conjugate exponents $r$ and $s$ implies the existence of a constant $\widetilde\beta_3>0$, independent of $h$, such that
\begin{align*}
\sup_{\substack{\vec{\bB\eta}_h\in\mX_h\setminus \{\vec{\bB{0}}\}}}
\frac{\mbf b(\vec{\bB\eta}_h,\vec{\bB w}_h)}{\|\vec{\bB\eta}_h\|_{\mX}}
\geq
\widetilde\beta_3 \|\vec{\bB w}_h\|_{\mbb M} \qquad \forall\,\vec{\bB w}_h\in\mbb M_h\,.
\end{align*}
Therefore, the discrete perturbed saddle-point problem satisfies the hypotheses of \cite[Theorem 3.2]{Gatica2023}. Consequently, there exists a constant $\widetilde\alpha_{\mbb A}>0$, independent of $h$, $\varphi_h$, and $\vartheta_h$, such that
\begin{align}
\sup_{\substack{
(\vec{\bB\eta}_h,\vec{\bB w}_h)\in\mX_h\times\mbb M_h\setminus\{(\vec{\bB 0},\vec{\bB 0})\}}}
\frac{\mbb A_{\varphi_h,\vartheta_h}\big((\vec{\bB\zeta}_h,\vec{\bB u}_h),(\vec{\bB\eta}_h,\vec{\bB w}_h)\big)
}{\|(\vec{\bB\eta}_h,\vec{\bB w}_h)\|}
&\geq
\widetilde\alpha_{\mbb A}\|(\vec{\bB\zeta}_h,\vec{\bB u}_h)\|
\quad &&
\forall(\vec{\bB\zeta}_h,\vec{\bB u}_h) \in\mX_h\times\mbb M_h\,, \label{eq:D:GIS:flow:1}\\
\sup_{\substack{(\vec{\bB\zeta}_h,\vec{\bB u}_h)\in\mX_h\times\mbb M_h\setminus\{(\vec{\bB 0},\vec{\bB 0})\}}}
\frac{ \mbb A_{\varphi_h,\vartheta_h}\big((\vec{\bB\zeta}_h,\vec{\bB u}_h),(\vec{\bB\eta}_h,\vec{\bB w}_h)\big)}{\|(\vec{\bB\zeta}_h,\vec{\bB u}_h)\|}
&\geq
\widetilde\alpha_{\mbb A}\|(\vec{\bB\eta}_h,\vec{\bB w}_h)\|
\quad && \forall (\vec{\bB\eta}_h,\vec{\bB w}_h)\in\mX_h\times\mbb M_h\,.\label{eq:D:GIS:flow:2}
\end{align}
The existence and uniqueness of the discrete solution now follow from \eqref{eq:D:GIS:flow:1}--\eqref{eq:D:GIS:flow:2} and \cite[Theorem 1.2, Section II.1.2]{Brezzi1991}. Finally, the boundedness of $\mbf G$ and $\mbf F_{\varphi_h,\vartheta_h}$ yields \eqref{eq:D:a priori:flow}.
\end{proof}

\subsection{Solvability of the fully coupled discrete problem}
\label{sec:DF:WP:coupled}

We now establish the existence of solutions to the fully coupled discrete problem. To this end, we consider the discrete fixed-point operator $\mbf T_h$ defined in \eqref{def:D:T}. The well-posedness results for the three decoupled discrete problems will first be used to show that $\mbf T_h$ maps a suitable closed ball of ${\rm Q}_h\times{\rm Q}_h$ into itself. We will then establish the continuity of this operator and apply Brouwer's fixed-point theorem.

Unlike the continuous analysis, no additional regularity hypothesis is required at the discrete level. Indeed, since the finite element spaces are finite-dimensional, every discrete function belongs to $\L^p(\O)$ for every finite $p$. However, the constants arising from the equivalence of norms or inverse inequalities generally depend on $h$. Consequently, these properties do not provide estimates that are uniform with respect to the mesh size. Nevertheless, for each fixed $h$, finite-dimensionality is sufficient to establish the continuity of the discrete solution operators and hence of $\mbf T_h$.

Given $\epsilon>0$, we introduce the closed ball
\begin{align*}
\mathcal W_{\epsilon,h}
:=
\Big\{ (\varphi_h,\vartheta_h)\in{\rm Q}_h\times{\rm Q}_h: \quad \|\varphi_h\|_{0,\rho;\O} + \|\vartheta_h\|_{0,\rho;\O}
\leq \epsilon \Big\}\,.
\end{align*}
We first establish that $\mbf T_h$ maps $\mathcal W_{\epsilon,h}$ into itself.

\begin{lemma}
\label{lemma:D:selfmapping}
Let $(\rho,\varrho)$ and $(r,s)$ be pairs of conjugate exponents satisfying \eqref{def:ranges}, together with $1/\rho+1/r=1/2$. Let $\widetilde\alpha_{\bB A}$ and $\widetilde\alpha_\mfk{A}$ denote the discrete stability constants associated with the concentration and heat problems, respectively, and let $\widetilde C_{\mathrm S_1}$, $\widetilde C_{\mathrm S_2}$, and $\widetilde C_{\bS}$ be the constants appearing in \eqref{eq:D:a priori:mass}, \eqref{eq:D:a priori:heat}, and \eqref{eq:D:a priori:flow}, respectively. Assume that, for some $\epsilon>0$, the data satisfy
\begin{align}
\widetilde{C}_{\mathrm S_1}
\datacons + \widetilde C_{\mathrm S_2}\dataheat \leq \epsilon\qquad \text{and}\qquad \widetilde{C}_{\bS} \dataflow(\epsilon)
\leq \tfrac{1}{2} \min \big\{ \widetilde\alpha_{\bB A}, \widetilde\alpha_\mfk{A}\big\}\,. \label{eq:D:selfmap:cs}
\end{align}
Then, $\mbf{T}_h$ is well-defined on $\mathcal W_{\epsilon,h}$ and $\mbf T_h(\mathcal W_{\epsilon,h}) \subseteq \mathcal W_{\epsilon,h}$.
\end{lemma}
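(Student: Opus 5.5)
The argument is the discrete counterpart of the proof of Lemma~\ref{lemma:selfmapping}. It chains the three discrete a priori estimates of Lemmas~\ref{lemma:D:solv:mass}, \ref{lemma:D:solv:heat} and \ref{lemma:D:solv:flow} in the order in which $\mbf T_h$ composes the solution operators (cf.~\eqref{def:D:T}).

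We fix $(\varphi_h,\vartheta_h)\in\mathcal W_{\epsilon,h}$. Lemma~\ref{lemma:D:solv:flow} requires no smallness assumption, so $\bB u_h:=\bS_h(\varphi_h,\vartheta_h)\in\bBc U_h$ is well defined. Estimate \eqref{eq:D:a priori:flow} gives
\begin{align*}
\|\bB u_h\|_{0,r;\O}\leq \widetilde C_{\bS}\,\dataflow\big(\|\varphi_h\|_{0,\rho;\O}+\|\vartheta_h\|_{0,\rho;\O}\big)\leq \widetilde C_{\bS}\,\dataflow(\epsilon)\,.
\end{align*}
The second inequality holds because $x\mapsto\dataflow(x)$ is nondecreasing on $\R^+$, being affine with nonnegative slope $\|\mbf f\|$. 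The second condition in \eqref{eq:D:selfmap:cs} then yields $\|\bB u_h\|_{0,r;\O}\leq\tfrac12\widetilde\alpha_{\bB A}$ and $\|\bB u_h\|_{0,r;\O}\leq\tfrac12\widetilde\alpha_{\mfk A}$. These are exactly the hypotheses on the velocity in Lemmas~\ref{lemma:D:solv:mass} and \ref{lemma:D:solv:heat}.

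Next, Lemma~\ref{lemma:D:solv:mass} applied to $(\varphi_h,\bB u_h)\in{\rm Q}_h\times\bBc U_h$ shows that $\phi_h:=\mathrm S_{1,h}(\varphi_h,\bB u_h)\in{\rm Q}_h$ is well defined, with $\|\phi_h\|_{0,\rho;\O}\leq\widetilde C_{\mathrm S_1}\datacons$. The key point is that this bound does not depend on $\varphi_h$. Since $\phi_h\in{\rm Q}_h$ and the velocity is the same $\bB u_h$, Lemma~\ref{lemma:D:solv:heat} applies to $(\phi_h,\bB u_h)$. It gives that $\theta_h:=\mathrm S_{2,h}(\phi_h,\bB u_h)$ is well defined, with $\|\theta_h\|_{0,\rho;\O}\leq\widetilde C_{\mathrm S_2}\dataheat$, again independently of its first argument. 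Hence $\mbf T_h(\varphi_h,\vartheta_h)=(\phi_h,\theta_h)$ is well defined. Adding the two bounds and using the first condition in \eqref{eq:D:selfmap:cs} gives $\|\phi_h\|_{0,\rho;\O}+\|\theta_h\|_{0,\rho;\O}\leq\epsilon$, that is, $\mbf T_h(\varphi_h,\vartheta_h)\in\mathcal W_{\epsilon,h}$.

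There is no real obstacle in this argument. The only care needed is to check that the smallness of $\bB u_h$ is established before $\mathrm S_{1,h}$ and $\mathrm S_{2,h}$ are invoked; this is what makes $\mbf T_h$ well defined on the ball and not merely bounded. One should also check that the constants $\widetilde C_{\mathrm S_1}$, $\widetilde C_{\mathrm S_2}$ and $\widetilde C_{\bS}$ do not depend on the input functions, which Lemmas~\ref{lemma:D:solv:mass}--\ref{lemma:D:solv:flow} guarantee.
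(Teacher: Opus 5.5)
Your proposal is correct and takes the same route as the paper. The paper's proof only says that the argument of Lemma~\ref{lemma:selfmapping} transfers to the discrete setting. Concretely, you bound $\bS_h(\varphi_h,\vartheta_h)$ with \eqref{eq:D:a priori:flow} and the second condition in \eqref{eq:D:selfmap:cs}, which makes Lemmas~\ref{lemma:D:solv:mass} and \ref{lemma:D:solv:heat} applicable. You then add their input-independent bounds and use the first condition. Your remarks on the monotonicity of $\dataflow$ and on $\phi_h\in{\rm Q}_h$ being an admissible input for $\mathrm S_{2,h}$ spell out details the paper leaves implicit.
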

\begin{proof}
This proof follows analogously to the one of Lemma~\ref{lemma:selfmapping} by making the corresponding changes in the constants of the a priori estimates of the decoupled discrete problems, and using the discrete operators instead of the continuous ones.
\end{proof}

We next establish the continuity required to apply Brouwer's
fixed-point theorem.

\begin{lemma}
\label{lemma:D:continuity:T}
Under the hypotheses of Lemma~\ref{lemma:D:selfmapping}, the operator $\mbf T_h: \mathcal W_{\epsilon,h} \to \mathcal W_{\epsilon,h}$ is continuous.
\end{lemma}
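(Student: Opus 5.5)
We would prove continuity of $\mbf T_h$ by showing that each of the three discrete solution operators $\mathrm S_{1,h}$, $\mathrm S_{2,h}$ and $\bS_h$ is continuous on the relevant (small-data) sets. Then $\mbf T_h$ is continuous as a composition of continuous maps. By Lemma~\ref{lemma:D:selfmapping}, for $(\varphi_h,\vartheta_h)\in\mathcal W_{\epsilon,h}$ the velocity $\bB u_h=\bS_h(\varphi_h,\vartheta_h)$ satisfies $\|\bB u_h\|_{0,r;\O}\le\tfrac12\min\{\widetilde\alpha_{\bB A},\widetilde\alpha_{\mfk A}\}$, so the compositions are well defined. Since all spaces are finite dimensional, every norm on ${\rm Q}_h$, $\bBc U_h$ and the product spaces is equivalent, with $h$-dependent constants. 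This is harmless because $h$ is fixed.

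We start with $\bS_h$. Let $(\varphi_{h,i},\vartheta_{h,i})\in\mathcal W_{\epsilon,h}$ with solutions $(\vec{\bB\zeta}_{h,i},\vec{\bB u}_{h,i})$, $i=1,2$. Subtracting the two instances of \eqref{syst:D:FP:weak:flow} and testing with the global form $\mbb A_{\varphi_{h,1},\vartheta_{h,1}}$ gives
\begin{align*}
\mbb A_{\varphi_{h,1},\vartheta_{h,1}}\big((\vec{\bB\zeta}_{h,1}-\vec{\bB\zeta}_{h,2},\vec{\bB u}_{h,1}-\vec{\bB u}_{h,2}),(\vec{\bB\eta}_h,\vec{\bB w}_h)\big)
={}&\int_\O\big(\mu(\varphi_{h,2},\vartheta_{h,2})-\mu(\varphi_{h,1},\vartheta_{h,1})\big)\bB\zeta_{h,2}:\bB\eta_h\\
&+\delta_4\int_\O\big(\mbf K^{-1}(\varphi_{h,2})-\mbf K^{-1}(\varphi_{h,1})\big)\bB u_{h,2}\cdot\bB w_h\\
&+\big(\mbf F_{\varphi_{h,1},\vartheta_{h,1}}-\mbf F_{\varphi_{h,2},\vartheta_{h,2}}\big)(\vec{\bB w}_h).
\end{align*}
We bound the first two terms with H\"older, the Lipschitz bounds \eqref{eq:LC:data}, and the fact that discrete functions lie in $\L^\infty$. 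Specifically, $\|\bB\zeta_{h,2}\|_{0,\infty;\O}\le C_h\|\bB\zeta_{h,2}\|_{0;\O}$ by an inverse inequality, and this is in turn controlled by the a priori bound \eqref{eq:D:a priori:flow}. For the third term we use a Lipschitz property of $\mbf f$; the paper uses $L_{\mbf f}$ in Lemma~\ref{lemma:LC:flow}, and we adopt the same assumption. The discrete inf-sup condition \eqref{eq:D:GIS:flow:1} then gives
\[
\|\bB u_{h,1}-\bB u_{h,2}\|_{0,r;\O}\le C_h\,\|(\varphi_{h,1},\vartheta_{h,1})-(\varphi_{h,2},\vartheta_{h,2})\|.
\]

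The same argument applies to $\mathrm S_{1,h}$ and $\mathrm S_{2,h}$, using the perturbed global forms $\bB A_{\varphi_h,\bB w_h}$ and $\mfk A_{\varphi_h,\bB w_h}$ and their discrete inf-sup constants $\widetilde\alpha_{\bB A}/2$ and $\widetilde\alpha_{\mfk A}/2$. The difference equation now has right-hand side terms of three kinds:
\begin{itemize}
\item $\int_\O(\mathcal D_1(\varphi_{h,2})-\mathcal D_1(\varphi_{h,1}))\tmass_{h,2}\cdot\bB s_h$, and its analogue with $\mathcal D_2$ for the heat problem;
\item $\int_\O\phi_{h,2}(\bB w_{h,1}-\bB w_{h,2})\cdot\bB s_h$, which is bounded by $\|\phi_{h,2}\|_{0,\rho;\O}\|\bB w_{h,1}-\bB w_{h,2}\|_{0,r;\O}\|\bB s_h\|_{0;\O}$;
\item the flux difference $\int_\O(\fbk(\varphi_{h,1})-\fbk(\varphi_{h,2}))\bB k\cdot\bB s_h$ in the concentration problem.
\end{itemize}
The $\mathcal D_i$ terms are handled by bounding $\|\tmass_{h,2}\|_{0,\infty;\O}\le C_h\widetilde C_{\mathrm S_1}\datacons$ (and the analogous bound for $\theat_{h,2}$) together with $\|\varphi_{h,1}-\varphi_{h,2}\|_{0;\O}\le C\|\varphi_{h,1}-\varphi_{h,2}\|_{0,\rho;\O}$. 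The convection term is controlled by the a priori estimates \eqref{eq:D:a priori:mass} and \eqref{eq:D:a priori:heat}. This yields local Lipschitz bounds, with $h$-dependent constants, for both operators. Chaining these bounds exactly as in the proof of Lemma~\ref{lemma:LC:T} gives continuity of $\mbf T_h$ on $\mathcal W_{\epsilon,h}$.

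The main obstacle is the variable-coefficient terms. At the continuous level they required hypothesis \textbf{(H)}, because the product of an $\L^\rho$ difference with an $\bL^2$ gradient is not controlled in $\bL^2$. Here we replace \textbf{(H)} by the inverse inequality $\|v_h\|_{0,\infty;\O}\le C h^{-1}\|v_h\|_{0;\O}$ on the fixed mesh, valid since $d=2$. The price is a continuity constant that degenerates as $h\to0$, which is acceptable since Brouwer's theorem needs only continuity.
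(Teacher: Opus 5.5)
Your proposal is correct and follows essentially the same route as the paper. Both arguments subtract the two instances of each discrete problem and apply the uniform discrete global inf-sup conditions together with the Lipschitz bounds on $\mathcal D_1$, $\mathcal D_2$, $\fbk$, $\mu$, $\mbf K^{-1}$ and $\mbf f$. Both use finite-dimensionality in place of \textbf{(H)} to control the products involving $\bB r_{h,2}$, $\bB t_{h,2}$ and $\bB\zeta_{h,2}$, and then chain the three estimates as in Lemma~\ref{lemma:LC:T}.

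The differences are cosmetic. The paper only notes that the relevant $\L^r$ norms of the discrete solutions are finite. You bound these norms in $\L^\infty$ via norm equivalence and the a priori estimates, which gives an $h$-dependent constant that is uniform over $\mathcal W_{\epsilon,h}$. The sign in front of your $\delta_4\mbf K^{-1}$ term should be negative, but this does not affect the bounds.
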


\begin{proof}
Let
$(\varphi_{i,h},\vartheta_{i,h})\in\mathcal W_{\epsilon,h}$,
$i\in\{1,2\}$, and set
$\bB u_{i,h}
:=
\bS_h(\varphi_{i,h},\vartheta_{i,h})$,
$\phi_{i,h}
:=
\mathrm S_{1,h}(\varphi_{i,h},\bB u_{i,h})$,
$\theta_{i,h}
:=
\mathrm S_{2,h}(\phi_{i,h},\bB u_{i,h})$. Thus,
$\mbf T_h(\varphi_{i,h},\vartheta_{i,h})
=
(\phi_{i,h},\theta_{i,h})$. The different estimates obtained in the proofs of Lemmas~\ref{lemma:LC:mass}--\ref{lemma:LC:flow}, before invoking the additional regularity hypothesis~\textbf{(H)}, remain valid when restricted to the corresponding finite-dimensional spaces. Indeed, subtracting the discrete problems and using the global discrete inf-sup of each discretised problem yields
\begin{align}
&
\|\mbf T_h(\varphi_{1,h},\vartheta_{1,h}) - \mbf T_h(\varphi_{2,h},\vartheta_{2,h})\|
\leq C_{2,h} \Big(\|\varphi_{1,h}-\varphi_{2,h}\|_{0,\rho;\O} + \|\vartheta_{1,h}-\vartheta_{2,h}\|_{0,\rho;\O}\Big)\,, \label{eq:D:continuity:T}
\end{align}
where $C_{2,h}>0$ depends on the corresponding stability and
Lipschitz constants, as well as on
\begin{align*}
&
\|\bB r_{2,h}\|_{0,r;\O}, \quad \|\phi_{2,h}\|_{0,\rho;\O}, \quad \|\bB t_{2,h}\|_{0,r;\O}, \quad \|\theta_{2,h}\|_{0,\rho;\O},\quad
\|\bB\zeta_{2,h}\|_{0,r;\O}, \quad\text{and}\quad \|\bB u_{2,h}\|_{0,r;\O}\,.
\end{align*}
These quantities are finite because the corresponding functions belong to finite-dimensional spaces. Therefore, for each fixed $(\varphi_{2,h},\vartheta_{2,h})\in\mathcal W_{\epsilon,h}$, the constant $C_{2,h}$ is finite, although it is not required to be uniform with respect to $h$. Hence, \eqref{eq:D:continuity:T} proves that $\mbf T_h$ is continuous on $\mathcal W_{\epsilon,h}$.
\end{proof}

We are now in a position to establish the solvability of the fully coupled discrete problem.

\begin{theorem}
\label{thm:D:EU}
Let $(\rho,\varrho)$ and $(r,s)$ be pairs of conjugate exponents satisfying \eqref{def:ranges}, together with
$1/\rho+1/r=1/2$. Assume that, for some $\epsilon>0$, conditions \eqref{eq:D:selfmap:cs} hold. Then, the
operator $\mbf T_h$, defined in \eqref{def:D:T}, has at least one fixed point $(\phi_h,\theta_h)\in \mathcal W_{\epsilon,h}$. Equivalently, the fully coupled discrete problem \eqref{syst:D:weak:coupled} has at least one solution
$(\vtmass_h,\vec{\bB{\phi}}_h) \in \bH_{1,h}\times\bQ_{1,h}$, $(\vtheat_h,\theta_h)\in \bH_{2,h}\times{\rm Q}_h$, $(\vec{\bB{\zeta}}_h,\vec{\bB{u}}_h)\in\mX_h\times\mbb M_h$, whose concentration--temperature component satisfies $(\phi_h,\theta_h)\in\mathcal W_{\epsilon,h}$. Moreover,
\begin{align}
\|(\vtmass_h,\vec{\bB{\phi}}_h)\|
\leq
\widetilde C_{\mathrm S_1} \datacons\,, \qquad
\|(\vtheat_h,\theta_h)\|
\leq \widetilde C_{\mathrm S_2} \dataheat\,,\qquad
\|(\vec{\bB{\zeta}}_h,\vec{\bB{u}}_h)\| &\leq \widetilde C_{\bS}\dataflow(\epsilon)\,. \label{eq:D:thm:aprioris}
\end{align}
\end{theorem}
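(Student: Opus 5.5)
The plan is to apply Brouwer's fixed-point theorem to $\mbf T_h$ on $\mathcal W_{\epsilon,h}$. First, $\mathcal W_{\epsilon,h}$ is a nonempty, convex, compact subset of the finite-dimensional space ${\rm Q}_h\times{\rm Q}_h$. It is closed and bounded in the norm $\|\varphi_h\|_{0,\rho;\O}+\|\vartheta_h\|_{0,\rho;\O}$, and all norms are equivalent there. Convexity follows from the triangle inequality, and the ball contains the origin. Under \eqref{eq:D:selfmap:cs}, Lemma~\ref{lemma:D:selfmapping} gives that $\mbf T_h$ is well defined on $\mathcal W_{\epsilon,h}$ and maps it into itself. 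Lemma~\ref{lemma:D:continuity:T} gives continuity. Brouwer's theorem then yields $(\phi_h,\theta_h)\in\mathcal W_{\epsilon,h}$ with $\mbf T_h(\phi_h,\theta_h)=(\phi_h,\theta_h)$.

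Next I recover the full discrete solution and check the equivalence with \eqref{syst:D:weak:coupled}. Set $\bB u_h:=\bS_h(\phi_h,\theta_h)$, with the accompanying $(\vec{\bB\zeta}_h,\vec{\bB u}_h)$ from Lemma~\ref{lemma:D:solv:flow}. By the second condition in \eqref{eq:D:selfmap:cs} and \eqref{eq:D:a priori:flow}, $\|\bB u_h\|_{0,r;\O}\le\widetilde C_{\bS}\dataflow(\epsilon)\le\tfrac12\min\{\widetilde\alpha_{\bB A},\widetilde\alpha_{\mfk A}\}$. Lemma~\ref{lemma:D:solv:mass} therefore applies with $(\varphi_h,\bB w_h)=(\phi_h,\bB u_h)$. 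It gives a unique $(\vtmass_h,\vec{\bB\phi}_h)$ whose first scalar component is $\mathrm S_{1,h}(\phi_h,\bB u_h)$, and this equals $\phi_h$ by the fixed-point identity. Likewise Lemma~\ref{lemma:D:solv:heat} with $(\phi_h,\bB u_h)$ gives $(\vtheat_h,\theta_h)$ with $\mathrm S_{2,h}(\phi_h,\bB u_h)=\theta_h$. Substituting these identities into \textbf{(DP.1)}--\textbf{(DP.3)} gives exactly the six equations of \eqref{syst:D:weak:coupled}. Conversely, any solution of \eqref{syst:D:weak:coupled} with $(\phi_h,\theta_h)\in\mathcal W_{\epsilon,h}$ yields a fixed point, by uniqueness of the decoupled discrete problems. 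This settles the stated equivalence.

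Finally, the estimates \eqref{eq:D:thm:aprioris} follow directly from \eqref{eq:D:a priori:mass}, \eqref{eq:D:a priori:heat} and \eqref{eq:D:a priori:flow}. For the flow estimate I also use that $\dataflow$ is nondecreasing in its argument and that $\|(\phi_h,\theta_h)\|\le\epsilon$.

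The only delicate point is the continuity of $\mbf T_h$, which I take from Lemma~\ref{lemma:D:continuity:T}. One subtlety is that the constant $C_{2,h}$ there depends on the second point. This is harmless, because continuity is a pointwise notion, so a local Lipschitz-type bound at each point suffices for Brouwer. No uniqueness is claimed, since contraction is not available uniformly in $h$.
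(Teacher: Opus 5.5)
Your proposal is correct and follows essentially the same route as the paper: Brouwer's theorem on the compact convex ball $\mathcal W_{\epsilon,h}$ via the discrete self-mapping and continuity lemmas for $\mbf T_h$, then recovery of the remaining components from the uniquely solvable decoupled discrete problems, with the stated bounds read off from the discrete a priori estimates. Your additional checks are all valid and make explicit what the paper leaves implicit: the smallness of $\bB u_h$ needed to invoke the decoupled lemmas, the monotonicity of $\dataflow$, the converse direction of the equivalence, and the fact that a point-dependent constant $C_{2,h}$ still gives continuity.
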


\begin{proof}
The set $\mathcal W_{\epsilon,h}$ is nonempty, closed, bounded, and convex. Since ${\rm Q}_h\times{\rm Q}_h$ is finite-dimensional, $\mathcal W_{\epsilon,h}$ is also compact. Furthermore, Lemma~\ref{lemma:D:selfmapping} gives $\mbf T_h(\mathcal W_{\epsilon,h})\subseteq \mathcal W_{\epsilon,h}$, whereas Lemma~\ref{lemma:D:continuity:T} shows that $\mbf T_h$ is continuous on $\mathcal W_{\epsilon,h}$. Therefore, Brouwer's fixed-point theorem ensures the existence of at least one fixed point $(\phi_h,\theta_h)\in\mathcal W_{\epsilon,h}$.

For each fixed point, the remaining discrete components are uniquely determined by the well-posedness of the three decoupled problems defining $\mathrm S_{1,h}$, $\mathrm S_{2,h}$, and $\bS_h$. Consequently, the fixed point yields a solution to the fully coupled discrete problem \eqref{syst:D:weak:coupled}. Finally,
\eqref{eq:D:thm:aprioris}
follow directly from \eqref{eq:D:a priori:mass},
\eqref{eq:D:a priori:heat}, and
\eqref{eq:D:a priori:flow}, respectively.
\end{proof}

\section{A priori error estimates}
\label{sec:a:priori}

In this section, we establish an a priori error estimate for the Galerkin scheme \eqref{syst:D:weak:coupled}. To this end, let $(\vtmass,\vec{\bB{\phi}}) \in \bH_1\times\bQ_1$,  $(\vtheat,\theta) \in \bH_2\times\L^\rho(\O)$, $(\vec{\bB{\zeta}},\vec{\bB{u}}) \in \mX\times\mbb M$ be the components of the unique solution described in Theorem~\ref{thm:EU}, and let $(\vtmass_h,\vec{\bB{\phi}}_h) \in \bH_{1,h}\times\bQ_{1,h}$, $(\vtheat_h,\theta_h) \in \bH_{2,h}\times{\rm Q}_h$, $(\vec{\bB{\zeta}}_h,\vec{\bB{u}}_h) \in \mX_h\times\mbb M_h$ be the components of a discrete solution provided by
Theorem~\ref{thm:D:EU}. We define the component errors
\begin{align*}
E_{1,h} &:= \|(\vtmass,\vec{\bB{\phi}}) - (\vtmass_h,\vec{\bB{\phi}}_h)\|\,,\qquad
E_{2,h} := \|(\vtheat,\theta) - (\vtheat_h,\theta_h)\|\,,\qquad
E_{3,h} := \|(\vec{\bB{\zeta}},\vec{\bB{u}}) - (\vec{\bB{\zeta}}_h,\vec{\bB{u}}_h)\|\,.
\end{align*}
and the corresponding total error as $\mbf{E}_h := E_{1,h} + E_{2,h} + E_{3,h}$. Furthermore, given a subspace $Z_h$ of a Banach space
$(Z,\|\cdot\|_Z)$, we introduce the following distance function
\begin{align*}
{\rm dist}(z,Z_h) := \inf_{z_h\in Z_h} \|z-z_h\|_Z \qquad \forall\,z\in Z\,.
\end{align*}
For convenience, we introduce the effective discrete stability constants
$\alpha_1^\star := \tfrac{\widetilde\alpha_{\bB A}}{2}$,
$\alpha_2^\star := \tfrac{\widetilde\alpha_\mfk{A}}{2}$ and
$\alpha_3^\star := \widetilde\alpha_{\mbb A}$,
and set
\begin{align*}
\kappa_1 &:= \big(L_{\mathcal D_1}+1\big)\|\bB i_\varepsilon\|\dfrac{\widehat{C}_1}{\alpha_1^\star} + \frac{L_{\rm fbk}}{\alpha_1^\star}\,,\qquad
\kappa_2 := \big(L_{\mathcal D_2}+1\big)\|\bB i_\varepsilon\|\dfrac{\widehat{C}_2}{\alpha_2^\star}\,,\qquad
\kappa_3 := \big(L_\mu+\delta_4L_{\mbf K}\big)\|\bB j_\varepsilon\|\dfrac{\widehat{C}_3}{\alpha_3^\star} + \dfrac{L_{\mbf f}}{\alpha_3^\star}\,.
\end{align*}
We now establish a C\'ea-type estimate for the total error $\mbf E_h$.

\begin{theorem}
\label{thm:CEA}
Let the hypotheses of Theorems~\ref{thm:EU} and \ref{thm:D:EU} hold, and assume that
\begin{align}
\kappa_1+\kappa_2+\kappa_3 < \tfrac{1}{2}\,. \label{eq:aux:CEA}
\end{align}
Then, the total error is bounded as follows
\begin{align}
& \mbf E_h \,\leq\,
2 \big(1+\|\bB{A}\|/\alpha_1^\star\big){\rm dist} \big( (\vtmass,\vec{\bB{\phi}}), \bH_{1,h}\times\bQ_{1,h}\big) +
2 \big(1+\|\mfk{A}\|/\alpha_2^\star\big){\rm dist} \big((\vtheat,\theta),\bH_{2,h}\times{\rm Q}_h\big)\nonumber\\ &\qquad +
2 \big(1 + \|\mbb A\|/\alpha_3^\star\big){\rm dist}\big((\vec{\bB{\zeta}},\vec{\bB{u}}),\mX_h\times\mbb M_h\big)\,.\label{eq:CEA}
\end{align}
Here, $\|\bB A\|$, $\|\mfk{A}\|$, and $\|\mbb A\|$ denote continuity bounds for the corresponding global bilinear forms.
\end{theorem}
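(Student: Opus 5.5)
The plan is to derive a Strang-type estimate for each of the three subproblems, with the nonlinear coupling showing up as perturbation terms, and then close a linear system of inequalities using \eqref{eq:aux:CEA}.

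\emph{Concentration error $E_{1,h}$.} Fix arbitrary $(\vsmass_h^\ast,\vec{\bB\psi}_h^\ast)\in\bH_{1,h}\times\bQ_{1,h}$ (for the reference approximation) and split
\[
(\vtmass,\vec{\bB\phi})-(\vtmass_h,\vec{\bB\phi}_h)=\big[(\vtmass,\vec{\bB\phi})-(\vsmass_h^\ast,\vec{\bB\psi}_h^\ast)\big]+\big[(\vsmass_h^\ast,\vec{\bB\psi}_h^\ast)-(\vtmass_h,\vec{\bB\phi}_h)\big].
\]
The second bracket lies in the discrete space. I apply to it the discrete global inf-sup condition of $\bB A_{\phi_h,\bB u_h}$ with constant $\alpha_1^\star$. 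This is legitimate because Theorem~\ref{thm:D:EU} gives $\|\bB u_h\|_{0,r;\O}\le\widetilde C_{\bS}\dataflow(\epsilon)\le \widetilde\alpha_{\bB A}/2$. Subtracting \eqref{syst:D:weak:coupled} from \eqref{syst:weak:coupled} tested on discrete functions yields the Galerkin-type identity
\[
\bB A_{\phi_h,\bB u_h}\big((\vtmass,\vec{\bB\phi})-(\vtmass_h,\vec{\bB\phi}_h),\cdot\big)=\big[\bB A_{\phi_h,\bB u_h}-\bB A_{\phi,\bB u}\big]\big((\vtmass,\vec{\bB\phi}),\cdot\big)+\mathcal F_\phi-\mathcal F_{\phi_h}.
\]
The consistency terms on the right are $\int(\mathcal D_1(\phi_h)-\mathcal D_1(\phi))\tmass\cdot\bB s_h$, $\int \phi(\bB u-\bB u_h)\cdot\bB s_h$ and $\int(\fbk(\phi)-\fbk(\phi_h))\bB k\cdot \bB s_h$. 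I bound them with H\"older's inequality ($1/\rho+1/r=1/2$), the Lipschitz bounds \eqref{eq:LC:data}, and, crucially, the regularity \textbf{(H)} with \eqref{eq:aux:RHs}, which gives $\|\tmass\|_{0,r;\O}\le\|\bB i_\varepsilon\|\widehat C_1$ (up to the $|\O|$ power). The convective term is handled the same way, controlling $\|\phi\|_{0,\rho;\O}$ by $\widehat C_1$-type data bounds; this is exactly the origin of the factor $L_{\mathcal D_1}+1$ in $\kappa_1$. Combining with boundedness of $\bB A$ and the triangle inequality, then taking the infimum over $(\vsmass_h^\ast,\vec{\bB\psi}_h^\ast)$, I expect
\[
E_{1,h}\le(1+\|\bB A\|/\alpha_1^\star)\,{\rm dist}\big((\vtmass,\vec{\bB\phi}),\bH_{1,h}\times\bQ_{1,h}\big)+\kappa_1\big(\|\phi-\phi_h\|_{0,\rho;\O}+\|\bB u-\bB u_h\|_{0,r;\O}\big).
\]

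\emph{Heat and flow errors.} The same argument for $E_{2,h}$ gives a $\kappa_2$-perturbation in $\|\phi-\phi_h\|_{0,\rho;\O}+\|\bB u-\bB u_h\|_{0,r;\O}$, with no $\fbk$ term; this uses $\|\theat\|_{0,r;\O}\lesssim\widehat C_2$. For $E_{3,h}$ I use the discrete inf-sup \eqref{eq:D:GIS:flow:1} with $\alpha_3^\star$. The perturbations are $\int(\mu(\phi,\theta)-\mu(\phi_h,\theta_h))\bB\zeta:\bB\eta_h$, $\delta_4\int(\mbf K^{-1}(\phi)-\mbf K^{-1}(\phi_h))\bB u\cdot\bB w_h$ and $\mbf f(\phi,\theta)-\mbf f(\phi_h,\theta_h)$. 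These produce $\kappa_3(\|\phi-\phi_h\|_{0,\rho;\O}+\|\theta-\theta_h\|_{0,\rho;\O})$, using $\|\bB\zeta\|_{0,r;\O}\lesssim\|\bB j_\varepsilon\|\widehat C_3$ and the bound on $\bB u$.

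\emph{Closing.} Each norm difference on the right is bounded by $\mbf E_h$. Summing the three estimates gives
\[
\mbf E_h\le \sum_i(1+\|\cdot\|/\alpha_i^\star){\rm dist}_i+(\kappa_1+\kappa_2+\kappa_3)\mbf E_h .
\]
Since $\kappa_1+\kappa_2+\kappa_3<1/2$, the last term can be absorbed, leaving a factor $1/(1-\sum_i\kappa_i)\le 2$, which is \eqref{eq:CEA}.

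The main obstacle is the perturbation estimate. The products $(\mathcal D_1(\phi)-\mathcal D_1(\phi_h))\tmass$ and $(\mu(\phi,\theta)-\mu(\phi_h,\theta_h))\bB\zeta$ must be placed in $\bL^2$ using only $\|\phi-\phi_h\|_{0,\rho;\O}$. This forces the continuous gradients into $\bL^r$, which is available only through \textbf{(H)}. It also requires keeping the $|\O|^{\tilde r}$ constants and the convective term $\int\phi(\bB u-\bB u_h)\cdot\bB s_h$ consistent with the definitions of $\kappa_i$. If that convective term does not fit under $\kappa_1$ as defined, I would absorb it with the a priori bound $\|\phi\|_{0,\rho;\O}\le\epsilon$ and adjust the constant.
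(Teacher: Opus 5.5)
Your proposal is correct and follows essentially the same route as the paper. The paper also gives, for each subproblem, a Céa-type estimate from the discrete global inf-sup of the perturbed form plus continuity. It bounds the consistency terms exactly as in Lemmas~\ref{lemma:LC:mass}--\ref{lemma:LC:flow} via \textbf{(H)}, then sums and absorbs $(\kappa_1+\kappa_2+\kappa_3)\mbf E_h$ using \eqref{eq:aux:CEA}. Your version is in fact more explicit than the paper's: you freeze the continuous solution in the consistency terms so that \textbf{(H)} applies, and you check $\|\bB u_h\|_{0,r;\O}\le\widetilde\alpha_{\bB A}/2$. The loose treatment of the $|\O|$-powers and of $\|\phi\|_{0,\rho;\O}$ inside $\kappa_1$ is shared with the paper, so it is not a gap relative to it.
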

\begin{proof}
The standard argument for C\'ea estimates associated with saddle-point problems consists in adding and subtracting arbitrary elements of the corresponding discrete product spaces, applying the discrete global inf-sup conditions, and using the boundedness of the global bilinear forms. Then, using the same difference estimates as in Lemmas~\ref{lemma:LC:mass}--\ref{lemma:LC:flow}, we obtain
\begin{align}
E_{1,h}
&\leq \big( 1 +  \|\bB A\|/\alpha_1^\star\big)
{\rm dist}\big( (\vtmass,\vec{\bB{\phi}}), \bH_{1,h}\times\bQ_{1,h} \big)
+ \kappa_1\big( \|\phi-\phi_h\|_{0,\rho;\O} + \|\bB u-\bB u_h\|_{0,r;\O} \big)\,, \label{eq:CEA:mass} \\[1ex]
E_{2,h}
&\leq \big( 1 + \|\mfk{A}\|/\alpha_2^\star \big) {\rm dist} \big((\vtheat,\theta), \bH_{2,h}\times{\rm Q}_h\big)
+
\kappa_2 \big( \|\phi-\phi_h\|_{0,\rho;\O} + \|\bB u-\bB u_h\|_{0,r;\O}\big)\,,\label{eq:CEA:heat}\\[1ex]
E_{3,h}
&\leq \big( 1 + \|\mbb A\|/\alpha_3^\star\big){\rm dist} \big( (\vec{\bB{\zeta}},\vec{\bB{u}}), \mX_h\times\mbb M_h \big)
+ \kappa_3 \big( \|\phi-\phi_h\|_{0,\rho;\O} + \|\theta-\theta_h\|_{0,\rho;\O} \big)\,.\label{eq:CEA:flow}
\end{align}
By the definitions of the product norms, there hold
\begin{align*}
\|\phi-\phi_h\|_{0,\rho;\O} + \|\bB u-\bB u_h\|_{0,r;\O} &\leq E_{1,h}+E_{3,h} \leq \mbf E_h\,,\\
\|\phi-\phi_h\|_{0,\rho;\O} + \|\theta-\theta_h\|_{0,\rho;\O} &\leq E_{1,h} + E_{2,h} \leq \mbf E_h\,.
\end{align*}
Hence, adding \eqref{eq:CEA:mass}--\eqref{eq:CEA:flow}, we deduce that
\begin{align*}
\mbf E_h
&\leq \big( 1 + \|\bB A\|/\alpha_1^\star \big) {\rm dist} \big( (\vtmass,\vec{\bB{\phi}}),\bH_{1,h}\times\bQ_{1,h} \big)
+ \big(1 + \|\mfk{A}\|/\alpha_2^\star\big){\rm dist} \big((\vtheat,\theta),\bH_{2,h}\times{\rm Q}_h\big)\\
&\quad+
\big(1 + \|\mbb A\|/\alpha_3^\star\big){\rm dist} \big((\vec{\bB{\zeta}},\vec{\bB{u}}), \mX_h\times\mbb M_h \big) +
\big( \kappa_1 + \kappa_2 + \kappa_3\big) \mbf E_h\,.
\end{align*}
Then, from the above inequality and assumption \eqref{eq:aux:CEA}, we conclude \eqref{eq:CEA}.
\end{proof}
The next theorem establishes the rate of convergence of the developed numerical scheme under the assumption of additional regularity of the continuous solution.
\begin{theorem}
\label{thm:accuracy}
Let $(\vec{\bB r},\vec{\bB\phi})\in\bH_1\times\bQ_1$, $(\vec{\bB t},\theta)\in\bH_2\times\L^\rho(\O)$, and $(\vec{\bB\zeta},\vec{\bB u})\in\mX\times\mbb M$ be the unique solution to the coupled system \eqref{syst:weak:mass}, \eqref{syst:weak:heat}, and \eqref{syst:weak:flow} provided by Theorem~\ref{thm:EU}.
In addition, let
$(\vec{\bB r}_h,\vec{\bB\phi}_h)
\in\bH_{1,h}\times\bQ_{1,h}$,
$(\vec{\bB t}_h,\theta_h)
\in\bH_{2,h}\times{\rm Q}_h$, and
$(\vec{\bB\zeta}_h,\vec{\bB u}_h)
\in\mX_h\times\mbb M_h$
be a solution to the discrete scheme provided by Theorem~\ref{thm:D:EU}. Assume the hypotheses of Theorems~\ref{thm:EU} and \ref{thm:D:EU}, and that there exist $k\in[1,\ell+1]$ and $\delta\in(0,1]$ such that:
\begin{eqnarray*}
&\bB{r},\bB{z},\bB{t},\widetilde{\bB\sigma}\in\bH^k(\O)\,,\qquad
\divr(\bB z),\divr(\widetilde{\bB\sigma})\in{\rm W}^{k,\varrho}(\O)\,,\qquad
\phi,\theta\in{\rm W}^{k,\rho}(\O)\,,\qquad
\bB u\in\mbf W^{k,r}(\O)\,,
&\\[1ex]
&\bB{\zeta},\bB{\sigma}, \bB{\gamma}\in\mbb H^k(\O)\,,\qquad
\bdiv(\bB\sigma)\in\mbf W^{k,s}(\O)\qquad \text{and}\qquad
\xi\in{\rm H}^{1/2+\delta}(\Gamma)\,.&
\end{eqnarray*}
Then, there exists a positive constant $C$, independent of $h$,
such that
\begin{align*}
\mbf E_h
&\leq Ch^k
\Big\{
\|\bB r\|_{k,\O} +
\|\bB z\|_{k,\O} +
\|\divr(\bB z)\|_{k,\varrho;\O} +
\|\phi\|_{k,\rho;\O} +
\|\bB t\|_{k,\O} +
\|\widetilde{\bB\sigma}\|_{k,\O} +
\|\divr(\widetilde{\bB\sigma})\|_{k,\varrho;\O} \\
&\qquad\qquad +
\|\theta\|_{k,\rho;\O} +
\|\bB\zeta\|_{k,\O} +
\|\bB\sigma\|_{k,\O} +
\|\bdiv(\bB\sigma)\|_{k,s;\O} 
+\|\bB u\|_{k,r;\O} +
\|\bB\gamma\|_{k,\O} \Big\} +
Ch^\delta\|\xi\|_{1/2+\delta,\Gamma}\,.
\end{align*}
\end{theorem}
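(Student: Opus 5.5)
The plan is to combine the C\'ea estimate of Theorem~\ref{thm:CEA} with approximation properties of each finite element space. The hypotheses of Theorems~\ref{thm:EU} and \ref{thm:D:EU}, together with \eqref{eq:aux:CEA}, let us start from \eqref{eq:CEA}. It then suffices to bound each distance term ${\rm dist}(\cdot,\cdot)$ by $h^k$ times the corresponding Sobolev norms, and the boundary unknown $\xi$ by $h^\delta\|\xi\|_{1/2+\delta,\Gamma}$. Since every distance is an infimum, we bound it by the norm of the error of a specific interpolant. This splits the product norms into their components.

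For the concentration and heat blocks, the approximants are as follows.
\begin{itemize}
\item For $\tmass,\theat\in\bBc{P}_\ell(\O)$ we use the $\bL^2$-orthogonal projection, which gives $\|\tmass-\Pi\tmass\|_{0;\O}\le Ch^k\|\tmass\|_{k,\O}$ for $k\le\ell+1$.
\item For $\phi,\theta\in{\rm Q}_h$ we use the $\L^2$-projection, which is also stable in $\L^\rho$. Local Bramble--Hilbert estimates in ${\rm W}^{k,\rho}$ then give $Ch^k\|\phi\|_{k,\rho;\O}$.
\item For $\vsigmass$ and $\vsigheat$ we use the Raviart--Thomas interpolant $\Pi^{RT}_h$. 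The $\bL^2$ error is $O(h^k\|\vsigmass\|_{k,\O})$.
\item For the divergence we use the commuting property $\div(\Pi^{RT}_h\vsigmass)=\mathcal{P}_h(\div\vsigmass)$, where $\mathcal{P}_h$ is the $\L^2$-projection onto ${\rm Q}_h$. This gives the $\L^\varrho$ error bound $Ch^k\|\div(\vsigmass)\|_{k,\varrho;\O}$.
\item In the heat case, the interpolant preserves the zero normal trace on $\Gamma_{\rm N}$ because that condition is edgewise.
\item For $\xi\in\widehat{\mathcal Q}_h$ (piecewise $\mathcal P_1$ on $\mathcal E_{2h}$) we invoke the standard approximation property from \cite[Section 4.4]{Gatica2014}: ${\rm dist}(\xi,\widehat{\mathcal Q}_h)\le Ch^\delta\|\xi\|_{1/2+\delta,\Gamma}$.
\end{itemize}

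For the flow block, the approximants are as follows.
\begin{itemize}
\item We use the $\L^2$-projections onto $\mbb P_{\ell+1}(\O)\cap\Ltwotr$ for $\bB\zeta$, onto $\bBc P_\ell(\O)$ for $\bB u$, and onto the skew-symmetric $\mbb P_\ell$ for $\bB\gamma$. The $\L^r$-stability of these projections on quasi-uniform meshes yields $\|\bB u\|_{k,r;\O}$-type bounds.
\item For $\bB\sigma$ we take the AFW (BDM-type) interpolant, row-wise, corrected by a constant multiple of $\I$ so that the zero-mean-trace condition is kept.
\item The commuting-diagram property converts $\|\bdiv(\bB\sigma-\Pi_h\bB\sigma)\|_{0,s;\O}$ into a projection error of $\bdiv\bB\sigma$ in ${\rm W}^{k,s}$.
\item Note that $k\le\ell+1$ limits the rates for the spaces of degree $\ell$, while the degree-$(\ell+1)$ spaces give at least $h^k$.
\end{itemize}

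Summing these component bounds and inserting them into \eqref{eq:CEA} gives the estimate, with $C$ absorbing $2(1+\|\cdot\|/\alpha^\star_i)$ and the interpolation constants.

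The main obstacle is the set of $\L^p$-type interpolation estimates for $p\neq2$. These are the $\L^\rho$ and $\L^r$ projection errors, and especially the $\L^\varrho$ and $\L^s$ divergence errors for the Raviart--Thomas and AFW interpolants. Those interpolants require slightly more regularity than $\bH^k$ to be well defined. I would first try to justify them through the commuting projection and local scaling arguments (as in \cite{Gatica2014,Caucao2025}), using $\bB z\in\bH^k$ with $k\ge1$ to guarantee that the edge degrees of freedom exist.
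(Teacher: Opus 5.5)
Your proposal is correct and follows essentially the same route as the paper. You insert the C\'ea estimate of Theorem~\ref{thm:CEA}, and you rightly make explicit its extra smallness condition \eqref{eq:aux:CEA}, which the statement leaves implicit. You then bound each distance by Raviart--Thomas/BDM and local polynomial projection errors, and handle the boundary term by the macro-edge approximation property of $\widehat{\mathcal Q}_h$; the paper cites \cite{Dominguez2013} for this rather than \cite{Gatica2014}. Your extra details, such as the trace-mean correction of the BDM interpolant by a multiple of $\I$ and the commuting-diagram divergence bounds in $\L^\varrho$ and $\L^s$, simply flesh out what the paper summarizes as ``standard approximation properties.''
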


\begin{proof}
Proceeding as in \cite[Theorem~5.12]{Careaga2025a}, the result follows from the C\'ea estimate established in
Theorem~\ref{thm:CEA}, together with the standard approximation properties of the Raviart--Thomas interpolator and the local
polynomial projectors. These properties yield order $h^k$ for all the variables defined in the interior of $\O$, provided that $k\in[1,\ell+1]$. It only remains to consider the boundary variable $\xi$. Recall that $\widehat{\mathcal Q}_h$ is the space of continuous piecewise linear functions defined on the coarsened boundary partition $\mathcal E_{2h}$. This is precisely the macro-edge construction considered in \cite[(4.48)--(4.49)]{Dominguez2013}. The corresponding approximation property \cite[$({\rm AP}_{\Gamma,h})$]{Dominguez2013} establishes that, for each $\delta\in(0,1]$ and each $\xi\in{\rm H}^{1/2+\delta}(\Gamma)$,
\begin{align*}
{\rm dist}\big(\xi,\widehat{\mathcal Q}_h \big)
:= \inf_{\xi_h\in\widehat{\mathcal Q}_h} \|\xi-\xi_h\|_{1/2,\Gamma} \leq Ch^\delta\|\xi\|_{1/2+\delta,\Gamma}\,.
\end{align*}
Combining this estimate with the approximation properties of the remaining finite element spaces and applying Theorem~\ref{thm:CEA} completes the proof.
\end{proof}
\section{Numerical examples} \label{sec:numerical:examples}

In order to test the performance of the model and numerical scheme described in Section~\ref{sec:DF:WP}, we present a series of numerical examples encompassing tests of convergence with respect to manufactured solutions, and simulations of different scenarios in two-dimensional spatial dimensions. The numerical scheme has been implemented in the open source finite element library FreeFEM \cite{FreeFem}. The fixed-point iteration scheme is solved with a  relative tolerance of $\texttt{tol} = 10^{-6}$. Then, if $\mathbf{X}_h^{(m)}\in \R^{\tt dof}$ represents the vector of all degrees of freedom (${\tt dof}$) associated to the coupled discretization for the $m$-th iteration, the fixed-point scheme is stopped whenever
\begin{equation*}
\frac{\|\mathbf X_h^{(m)}-\mathbf X_h^{(m-1)}\|_{\ell^2}}
{\|\mathbf X_h^{(m)}\|_{\ell^2}}
\le \texttt{tol}\,,
\end{equation*}
where $\|\cdot\|_{\ell^2}$ is the $2$-norm in $\R^{\tt dof}$.
\subsection{Example 1: Accuracy test} \label{example:accuracy}
\begin{table}[t!]
\setlength{\cmidrulewidth}{0.5pt}
\setlength{\tabcolsep}{5pt}
\centering
\caption{Example 1: Numerical errors and experimental rates for the unknowns related to the concentration equation (top table), and unknowns related to the heat equation (bottom table) for a sequence of mesh sizes $h$. The errors have been computed with the manufactured solution \eqref{eq:manufactured:soln}.} \label{tab:accuracy-concentration}
\smallskip
\begin{tabular}{c | cc cc cc cc }
\multicolumn{9}{c}{Error table concentration equation}\\
\toprule
$h$ & $e_h(\bB{r})$ & $\mathcal R$ & $e_h(\bB{z})$ & $\mathcal R$ & $e_h(\phi)$ & $\mathcal R$ & $e_h(\xi)$ & $\mathcal R$\\
\cmidrule(lr){1-1} \cmidrule(lr){2-3}\cmidrule(lr){4-5}\cmidrule(lr){6-7} \cmidrule(lr){8-9}
0.3727 & 5.12e-2 & --   & 1.06e-1 & --   & 8.83e-3 & --   & 8.01e-2 & --   \\
0.1964 & 2.25e-2 & 1.28 & 5.45e-2 & 1.04 & 4.88e-3 & 0.93 & 2.22e-2 & 2.00 \\
0.0978 & 1.09e-2 & 1.04 & 2.68e-2 & 1.01 & 2.42e-3 & 1.01 & 7.02e-3 & 1.65 \\
0.0481 & 5.30e-3 & 1.02 & 1.35e-2 & 0.97 & 1.23e-3 & 0.95 & 1.75e-3 & 1.96 \\
0.0279 & 2.64e-3 & 1.28 & 6.72e-3 & 1.29 & 6.18e-4 & 1.26 & 4.80e-4 & 2.37 \\
\bottomrule
\end{tabular}\\
\smallskip
\begin{tabular}{c | cc cc cc}
\multicolumn{7}{c}{Error table heat equation}\\
\toprule
$h$ & $e_h(\bB{t})$ & $\mathcal R$ & $e_h(\widetilde{\bB{\sigma}})$ & $\mathcal R$ & $e_h(\theta)$ & $\mathcal R$\\
\cmidrule(lr){1-1} \cmidrule(lr){2-3}\cmidrule(lr){4-5}\cmidrule(lr){6-7}
0.3727 & 2.28e-1 & --   & 1.09e-1 & --   & 2.55e-2 & --   \\
0.1964 & 1.02e-1 & 1.26 & 5.51e-2 & 1.06 & 1.30e-2 & 1.05 \\
0.0978 & 5.15e-2 & 0.98 & 2.79e-2 & 0.98 & 6.65e-3 & 0.96 \\
0.0481 & 2.48e-2 & 1.03 & 1.41e-2 & 0.96 & 3.46e-3 & 0.92 \\
0.0279 & 1.27e-2 & 1.23 & 7.10e-3 & 1.27 & 1.72e-3 & 1.29 \\
\bottomrule
\end{tabular}
\end{table}

We consider the unit square $\Omega=(0,1)^2$, whose boundary consists of a Dirichlet part $\Gamma_{\rm D} := \{(x,0)\text{ or }(x,1): 0\leq x\leq 1\}$ and a Neumann part $\Gamma_{\rm N} := \{(0,x)\text{ or }(1,x): 0\leq x\leq 1\}$. Now, to illustrate the convergence estimates established in Theorem~\ref{thm:accuracy}, we choose the following manufactured solutions:
\begin{alignat}{2} \label{eq:manufactured:soln}
\begin{aligned}
\phi(\bB{x})   &= 1-0.9\exp \bigl(x_1x_2(x_1-1)(x_2-1)\bigr)\,,&\qquad
\theta(\bB{x}) &= 1+0.1\cos(2\pi x_1)\cos(\pi x_2)\,, \\
\bB{u}(\bB{x}) &= 0.1
\begin{pmatrix}
\sin(\pi x_1)\cos(\pi x_2)
\\[1mm]
-\cos(\pi x_1)\sin(\pi x_2)
\end{pmatrix},&\qquad
p(\bB{x})
&=
\sin(\pi x_1)\sin(\pi x_2)\,.
\end{aligned}
\end{alignat}
Here, the nonlinear constitutive functions used are given by
\begin{alignat*}{3}
f_{\rm bk}(\phi) &= \tfrac{1}{2}\phi\big(1-\tfrac{1}{2}\phi\big)^2\,, &\qquad \mathcal{D}_1(\phi) &= \exp(-\phi^2)\,,&\qquad \mathcal D_2(\phi) &= D_{20}+D_{21}\exp(-\phi^2)\,,\\
\mu(\phi) &= \big(1-\tfrac{1}{2}\phi\big)^{-2}\,,&\qquad \mbf{K}(\phi) &= K_0(1+\phi)\mbb{I}\,,&\qquad \mbf{f}(\phi,\theta) &= - (\beta_{\phi}\phi+\beta_{\theta}\theta)\bB{k}\,.
\end{alignat*}
We set $\delta_i=1$ for $i=1,\dots,4$, $D_{20}=0.1$, $D_{21}=0.01$, $K_0=0.01$ and $\beta_{\phi}=\beta_{\theta}=0.25$. The source terms and boundary data are obtained by inserting the manufactured solutions into the corresponding strong equations. In particular, the prescribed concentration flux is $\Phi=\bB{z}\cdot\bB{\nu}$ on $\Gamma$, whereas $\xi=-\phi|_{\Gamma}$ is the exact trace variable as $\phi$ is sufficiently smooth.

For the spatial discretization, we employ the lowest-order Raviart--Thomas spaces for the concentration and heat subproblems and the lowest-order Arnold--Falk--Winther element for the flow subproblem. The nonlinear system is solved by a Picard iteration, initialized with $\phi_h^{(0)}=0.1$ and $\theta_h^{(0)}=1$. We choose $r=\rho=4$ and $s=\varrho=4/3$. The errors are defined by
\begin{alignat*}{4}
e_h(\bB{r}) &:= \|\bB{r}-\bB{r}_h\|_{0,\Omega},&\quad
e_h(\bB{z}) &:= \|\bB{z}-\bB{z}_h\|_{\div_{\varrho};\Omega}, &\quad
e_h(\phi)   &:= \|\phi-\phi_h\|_{0,\rho;\Omega},&\quad
e_h^0(\xi)  &:= \|\xi-\xi_h\|_{0,\Gamma},\\
e_h(\bB{t}) &:= \|\bB{t}-\bB{t}_h\|_{0,\Omega}, &\quad
e_h(\widetilde{\bB{\sigma}}) &:= \|\widetilde{\bB{\sigma}} - \widetilde{\bB{\sigma}}_h\|_{\div_{\varrho};\Omega}, &\quad
e_h(\theta)     &:= \|\theta-\theta_h\|_{0,\rho;\Omega},&\quad
e_h(\bB{\zeta}) &:= \|\bB{\zeta}-\bB{\zeta}_h\|_{0,\Omega}, \\
e_h(\bB{\sigma}) &:= \|\bB{\sigma} - \widehat{\bB{\sigma}}_h\|_{\div_s;\Omega}, &\quad
e_h(\bB{u}) &:= \|\bB{u}-\bB{u}_h\|_{0,r;\Omega}, &\quad
e_h(\bB{\gamma}) &:= \|\bB{\gamma}-\bB{\gamma}_h\|_{0,\Omega}, &\quad
e_h(p)      &:= \|p-p_h\|_{0,\Omega},\\
e_h(\xi) &:= \|\xi-\xi_h\|_{1/2,\Gamma}.
\end{alignat*}
As in \cite[Section~5.1]{Gatica2023}, we restore the prescribed pressure mean. In our implementation, the pressure is computed as
\begin{align*}
p_h|_K
=-\frac{1}{d|K|}\int_K\tr(\bB{\sigma}_h)+\frac{4}{\pi^2}
\qquad\forall K\in\mathcal T_h\,.
\end{align*}
The experimental rate between two consecutive mesh sizes $h_1$ and $h_2$, with respective errors $e_{h_1}$ and $e_{h_2}$, is computed as
\begin{equation*}
\mathcal{R} = \frac{\log\bigl(e_{h_1}/e_{h_2}\bigr)}{\log(h_1/h_2)}\,.
\end{equation*}

\begin{table}[t!]
\setlength{\cmidrulewidth}{0.5pt}
\setlength{\tabcolsep}{5pt}
\centering
\caption{Example 1: Numerical errors and experimental rates for the unknowns related to the flow equations for a sequence of mesh sizes $h$. The errors have been computed with the manufactured solution \eqref{eq:manufactured:soln}.}\label{tab:accuracy-flow}
\smallskip
\begin{tabular}{c | cc cc cc | cc cc}
\multicolumn{11}{c}{Error table flow equations}\\
\toprule
$h$ & $e_h(\bB{\zeta})$ & $\mathcal R$ & $e_h(\bB{\sigma})$ & $\mathcal R$ & $e_h(\bB{u})$ & $\mathcal R$
& $e_h(\bB{\gamma})$ & $\mathcal R$ & $e_h(p)$ & $\mathcal R$\\
\cmidrule(lr){1-1}\cmidrule(lr){2-3} \cmidrule(lr){4-5} \cmidrule(lr){6-7}\cmidrule(lr){8-9} \cmidrule(lr){10-11}
0.3727 & 3.70e-2 & --   & 6.65e-1 & --   & 2.48e-2 & --   & 4.86e-2 & --   & 1.10e-1 & --   \\
0.1964 & 1.87e-2 & 1.07 & 2.82e-1 & 1.34 & 1.09e-2 & 1.29 & 2.61e-2 & 0.97 & 5.87e-2 & 0.99 \\
0.0978 & 9.16e-3 & 1.02 & 1.40e-1 & 1.01 & 5.45e-3 & 0.99 & 1.29e-2 & 1.01 & 2.91e-2 & 1.01 \\
0.0481 & 4.56e-3 & 0.98 & 6.81e-2 & 1.01 & 2.63e-3 & 1.03 & 6.45e-3 & 0.98 & 1.45e-2 & 0.98 \\
0.0279 & 2.29e-3 & 1.27 & 3.44e-2 & 1.26 & 1.34e-3 & 1.24 & 3.24e-3 & 1.26 & 7.30e-3 & 1.26 \\
\bottomrule
\end{tabular}
\end{table}

The results show first-order convergence for all the variables and for the total error with the exception of the trace variable $\xi_h$, in agreement with the estimates of Theorem~\ref{thm:accuracy} for the lowest-order method.
In particular, after adopting the AFW choice $\gamma_h\in\mathbb P_0(\mathcal T_h)$, both $\bB{\zeta}_h$ and $\gamma_h$
converge with order one. The trace variable $\xi_h$ exhibits approximately second-order convergence in $\mathrm H^{1/2}(\Gamma)$ which suggests that the theoretical error bound could be further improved.
Finally, all the discrete systems reach the prescribed Picard tolerance in five iterations.

\subsection{Example 2: Batch sedimentation}\label{sec:example:2}

\begin{figure}[t!]
\centering
 \includegraphics[scale=0.8]{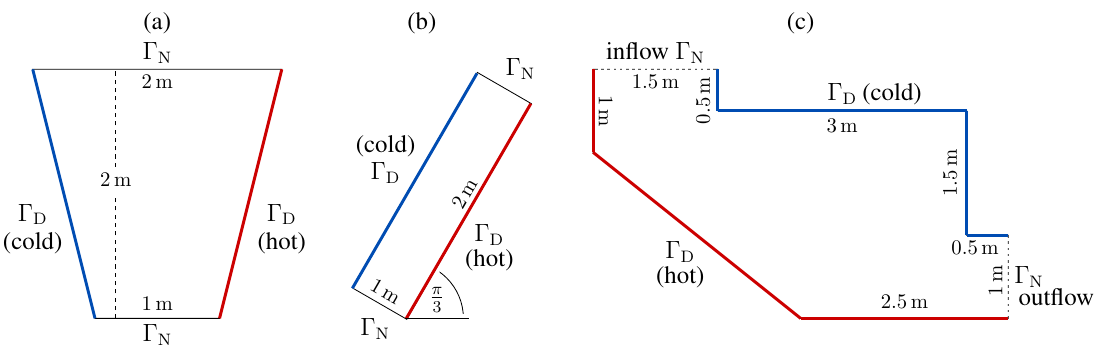}
 \caption{Vessel geometries (spatial domains) for Example 2: batch Kynch test (a) and Diehl test (b); and for Example 3: continuous sedimentation (c). Dirichlet and Neumann boundaries, respectively denoted by $\Gamma_{\rm D}$ and $\Gamma_{\rm N}$, are shown in each diagram.}\label{fig:domains}
\end{figure}

In this example, we show the performance of the proposed mixed finite element formulation for the coupled heat-assisted sedimentation problem in the evolutionary case, where we solve for $\phi_h^{n+1}$, $\theta_h^{n+1}$ and $\bB{u}_h^{n+1}$ at the time $t^{n+1} = (n+1)\Delta t$ with $\Delta t>0$ being the time step. In this case, $\delta_1 = \delta_2=\delta_3 = 1/\Delta t$, and $l_1 = \phi^n_h/\Delta t$, $l_2 = \theta^n_h/\Delta t$ and $\bB{l}_3 = \bB{u}_h^{n}/\Delta t$ in agreement with a backward Euler approximation of a time derivative. In this test, we consider the batch sedimentation process in which $\Phi = 0$ therefore there is no volumetric flow entering or leaving the domain. Furthermore, we consider the following functions
\begin{align*}
 \fbk(\phi) & =  v_0 \phi (1 - \phi)^2\,,\qquad \mathcal{D}_1(\phi)= 0.015\exp(-\phi^2)\,,\qquad \mathcal{D}_2(\phi) = \dfrac{k_{\rm f}(1 - \phi) + k_{\rm s}\phi}{c_{\rm pf}(1 - \phi) + c_{\rm ps}\phi}\,,\\
 \mu(\phi,\theta) &=  0.1 (1 - 0.8\phi)^{-2.5} 10^{247.8/(\theta + 273.15 - 140)}\,,\quad
\mbf{f}(\phi,\theta)= g\Big( \rho(\phi) - \rho_{\rm f}(1 - \phi) \beta_{\rm f}(\theta - \theta_{\rm a})\Big)\bB{k}\,,\\
\rho(\phi) & = \rho_{\rm s}\phi + \rho_{\rm f} (1-\phi)\,.
 \end{align*}
where $v_0 = 0.5\, \rm m/s$, $g = 9.8\,\rm m/s^2$, $k_{\rm s} = 3\,\rm W/mK$, $k_{\rm f} = 0.58\,\rm W/mK$, $c_{\rm ps} = 0.0741\,\rm J/kg$, $c_{\rm pf} = 4.18\,\rm J/kg$, $\rho_{\rm f} = 1000\,\rm kg/m^3$, $\rho_{\rm s} = 2650\,\rm kg/m^3$, $\Delta \rho  = 1650\,\rm kg/m^3$, $\beta_{\rm f} = 0.000207\,\rm 1/C^{\circ}$, $\theta_{\rm a} = 25 \rm C^{\circ}$. The temperature dependent factor in function $\mu$ is an amplified version of the one stated for the viscosity function in \cite{Reyes2019}. This setting corresponds to a highly viscous mixture with a large settling velocity $v_0$ to better see the combined effect of sedimentation and bulk velocity, thus serving primarily as an illustrative academic example.  Furthermore, we set $\delta_4=0$ such that no influence of $\mbf{K}$ is considered.

We simulate two sub-examples, a standard batch Kynch test and a Diehl test \cite{Careaga2025} where the discontinuous initial concentration is assumed to be concentrated at the top of the vessel. For the first test, we consider a domain with the shape of a downward-pointing truncated triangle, see vessel (a) in Figure~\ref{fig:domains}, with left and right inclined Dirichlet boundaries and top and bottom Neumann boundaries.
The homogeneous initial conditions for this example are: $\phi^0_h = 0.3$, $\theta^0_h = \theta_{\rm a}$ and $\bB{u}_h^0 = \bB{0}$. For the cold (left) boundary we set $\theta_{\rm D} = 20 \,\rm C^{\circ}$ and for the hot (right) boundary $\theta_{\rm D} = 40\,\rm C^{\circ}$. We set $\Delta t=0.005\,\rm s$ and simulate the system up to $1000$ time iterations with polynomial degree $\ell=0$ and meshsize $h=0.0496$.
In Figure~\ref{fig:batch1}, we show the approximated solution for $\phi_h$, $\theta_h$ and $\bB{u}_h$ at $t = 0.5,1.5, 3,5\,\rm s$. For each time, 4 Picard iterations were needed. The evolution of the sedimentation process is visible as time evolves, where at $t=5\,\rm s$ the solid phase is almost entirely separated from the liquid. In addition, we observe how the temperature gets to an equilibrium, and that the effect of temperature on the magnitude of the velocity field is more pronounced at earlier times. The total mass is preserved for each time iteration.

For the second sub-example, the Diehl test, we consider an inclined channel whose computational domain corresponds to a rectangle inclined $\pi/3$ with respect to the horizontal line, see vessel (b) in Figure~\ref{fig:domains}. As before, the inclined left boundary corresponds to the Dirichlet cold boundary with $\theta_{\rm D} = 20 \,\rm C^{\circ}$  and the inclined right boundary is the Dirichlet hot boundary with $\theta_{\rm D} = 40 \,\rm C^{\circ}$. The main difference with respect to the previous example is the initial condition for the concentration, which is set as
\begin{align}\label{eq:phi0:DT}
 \phi_h^0 = \begin{cases}
              0.3 &\text{ if } x_1 + \sqrt{3}x_2\geq 0.8\,,\\
             0 &\text{ if } x_1 + \sqrt{3}x_2 < 0.8\,.
            \end{cases}
\end{align}
We keep the rest of the parameters and initial conditions as for the Kynch test. The meshsize in this case is $h=0.0398$. In Figure~\ref{fig:batch2}, we show the approximated solution for $t = 0.05, 0.25, 0.5, 1, 1.5, 3\, \rm s$. We observe that the volume fraction of solid particles, initially concentrated at the 30\% upper part of the vessel begins to move towards the right boundary, and slide through it until it reaches the bottom of the vessel where it settles up to a high concentration. As before, the heat flow propagates along the domain reaching an equilibrium already at $t=3\,\rm s$.
The velocity field, as also shown in \cite{Careaga2025} develops an initially circular behaviour, which latter on due to the increase of heat in right wall, the magnitude along this wall increases. In this example, the average Picard iteration for the first was 6.

\begin{figure}[t!]
\centering
\includegraphics[scale=0.7]{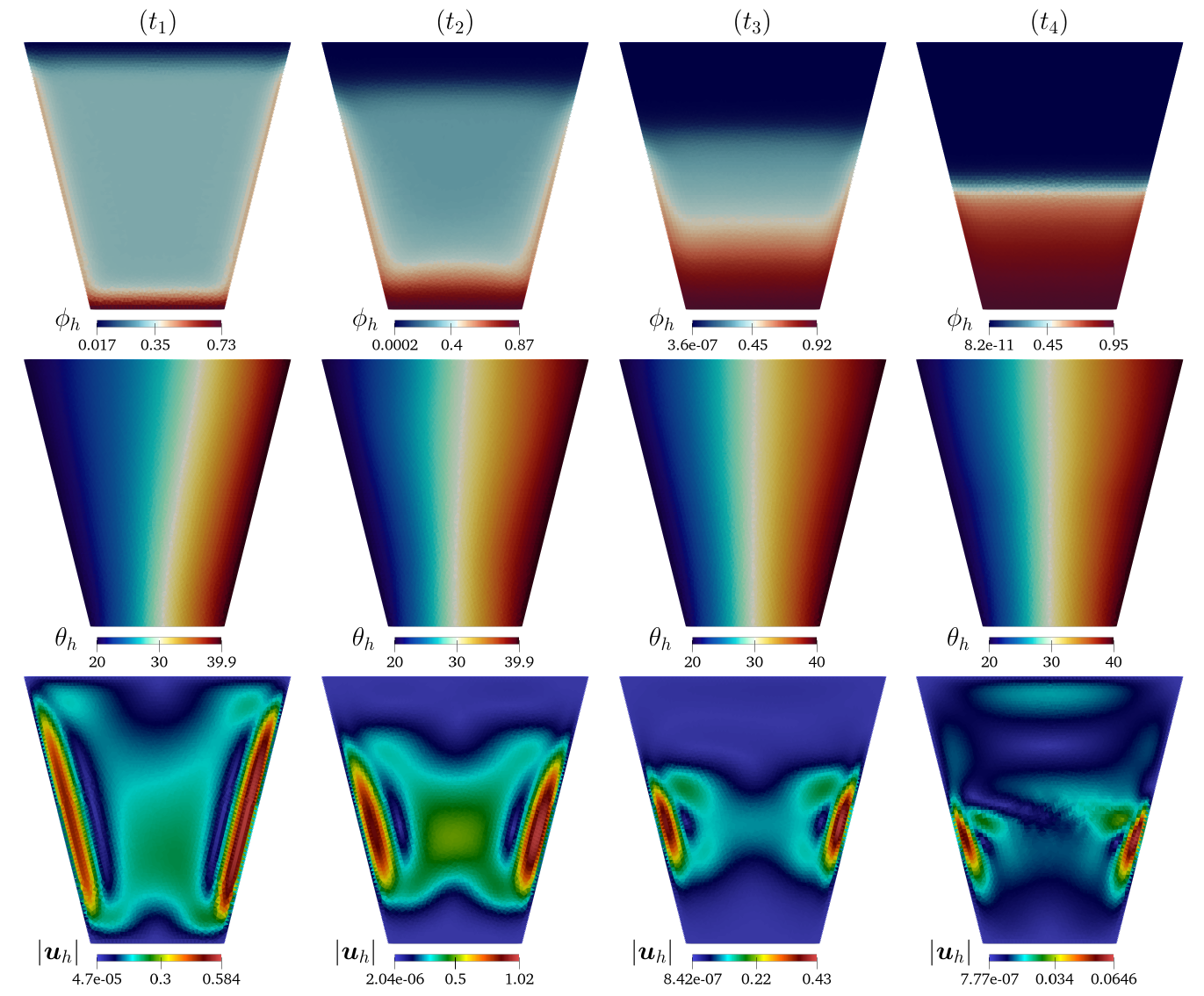}
\caption{Example 2: snapshots of the numerical solution for the batch Kynch test with homogeneous initial condition $\phi^0_h = 0.3$, $\theta^0_h = 25\,\rm C^{\circ}$ and $\bB{u}^0_h = \bB{0}$ at four times $t_1 = 0.5\,\rm s$, $t_2 = 1.5\,\rm s$, $t_3 = 3\,\rm s$, $t_4 = 5\,\rm s$ with $\Delta t = 0.005\,\rm s$. The meshsize is $h=0.0496$, polynomial degree $\ell=0$ and total number of degrees of freedom is ${\tt dof} = 284560$.
}\label{fig:batch1}
\end{figure}

\begin{figure}[t!]
\centering
\includegraphics[scale=0.7]{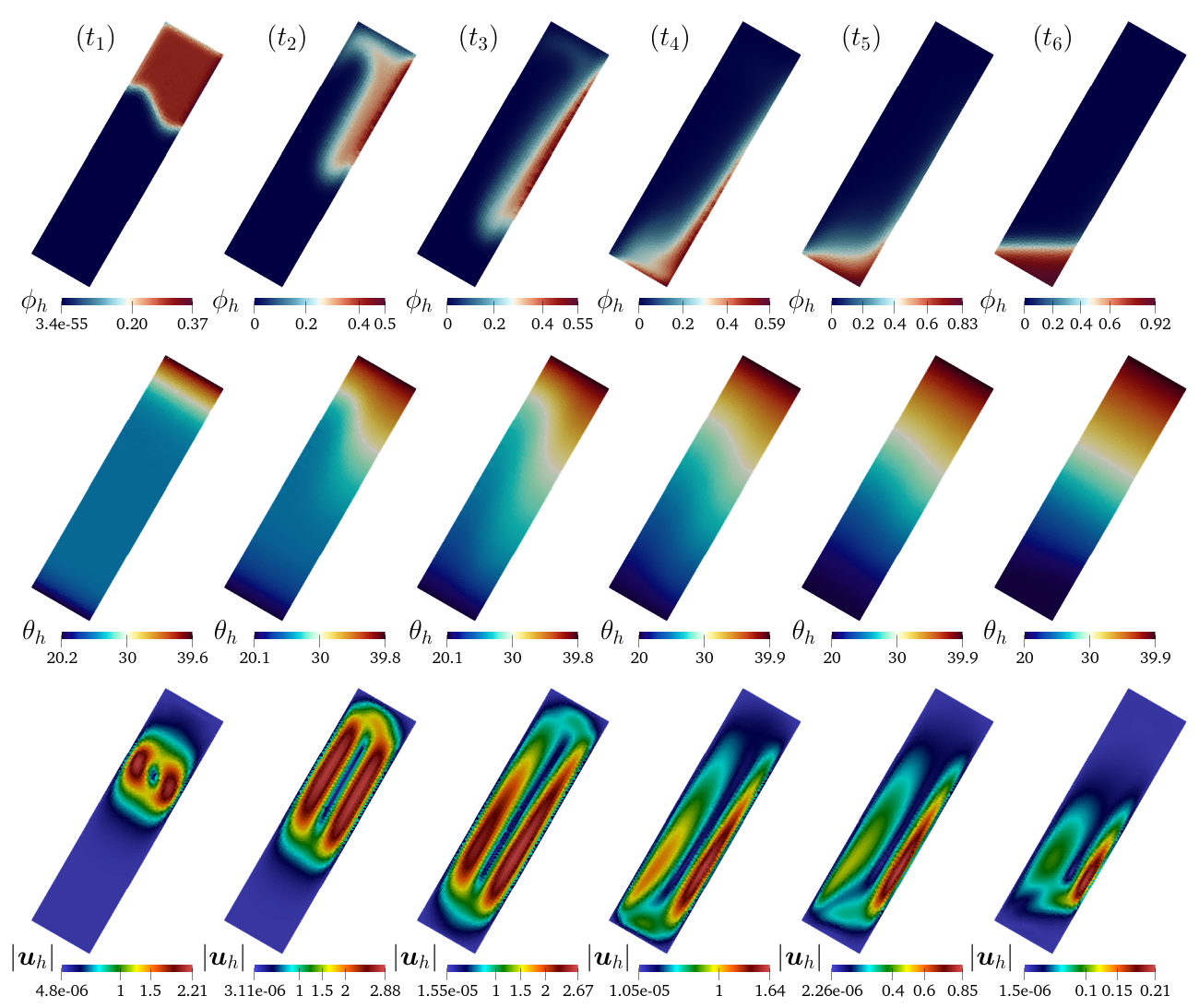}
\caption{Example 2: snapshots of the numerical solution for the Diehl test with initial concentration $\phi_h^0$ given by \eqref{eq:phi0:DT} and $\theta^0_h = 25\,\rm C^{\circ}$ and $\bB{u}^0_h = \bB{0}$ at six times $t_1 = 0.05\,\rm s$, $t_2 = 0.25\,\rm s$, $t_3 = 0.5\,\rm s$, $t_4 = 1\,\rm s$, $t_5 = 1.5\,\rm s$, $t_6 = 3\,\rm s$ with $\Delta t = 0.005\,\rm s$. The meshsize is $h= 0.0398$, polynomial degree $\ell=0$ and total number of degrees of freedom is ${\tt dof} = 136176$.}\label{fig:batch2}
\end{figure}

\subsection{Example 3: Continuous sedimentation}\label{sec:example:3}
\begin{figure}[t!]
\centering
\includegraphics[scale=0.8]{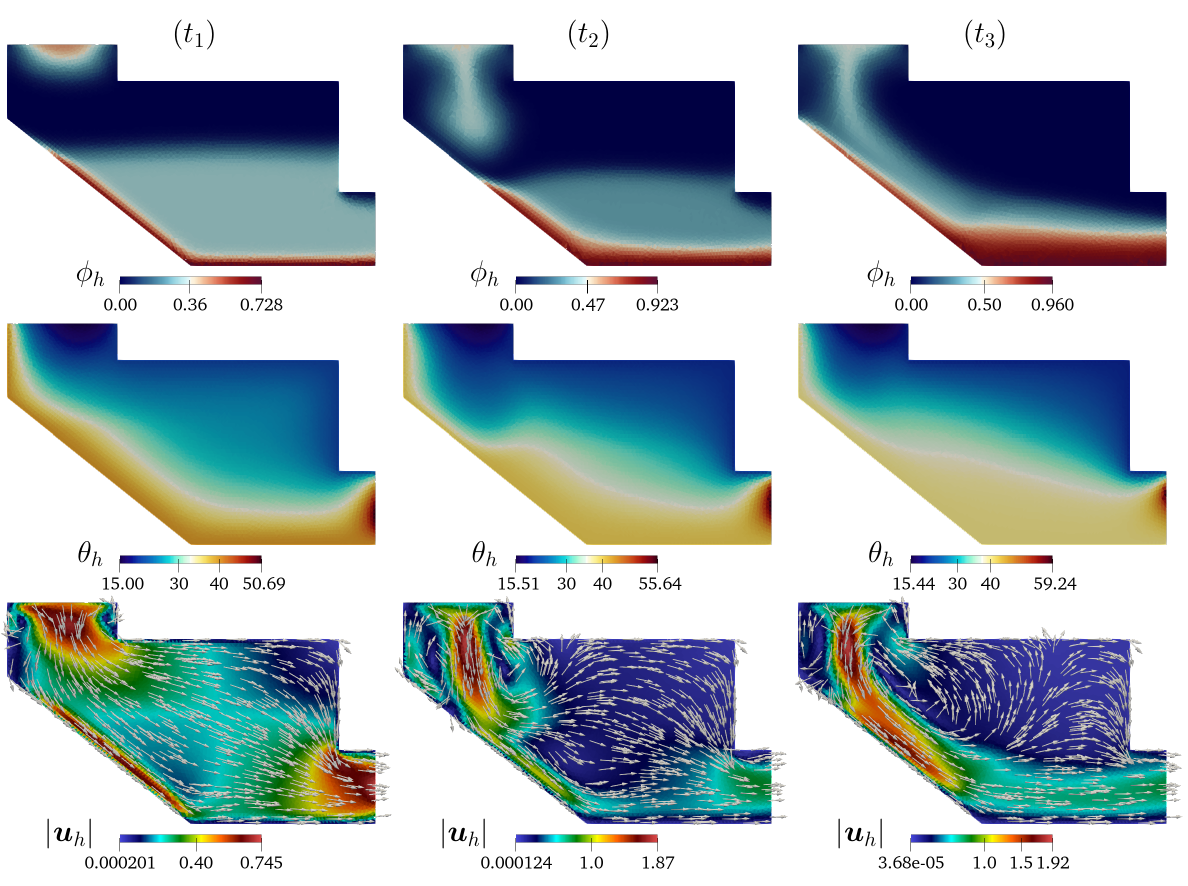}
\caption{Example 3: snapshots of the numerical solution of the continuous sedimentation example with initial concentration $\phi_h^0$ given by \eqref{eq:phi0:cont} and $\theta^0_h = 25\,\rm C^{\circ}$ and $\bB{u}^0_h = \bB{0}$ at three times $t_1 = 0.5\,\rm s$, $t_2 = 1.5\,\rm s$, $t_3 = 3\,\rm s$ with $\Delta t = 0.001\,\rm s$. The meshsize is $h = 0.0826$, polynomial degree $\ell=0$ and total number of degrees of freedom is ${\tt dof} = 320820$.}\label{fig:cont1}
\end{figure}
In this example, we consider the continuous sedimentation process in a time-dependent setting, as in Example~\ref{sec:example:2}. The computational domain, vessel (c) in Figure~\ref{fig:domains}, simulates a sedimentation tank with inlet and outlet (Neumann) boundaries. In this case, the Dirichlet boundaries corresponding to the cold and hot walls are chosen as described in Figure~\ref{fig:domains}, in which the temperature is set to $\theta_{\rm D} = 40\,\rm C^{\circ}$ for the hot wall, and $\theta_{\rm D}= 20\,\rm C^{\circ}$ for the cold wall. The initial condition for the temperature and velocity are set as in Example~\ref{sec:example:2}, $\theta_h^{0} = 25\,\rm C^{\circ}$ and $\bB{u}_h^{0} = \bB{0}$, while for the concentration we have
\begin{align}\label{eq:phi0:cont}
 \phi_h^0(x_1,x_2) = \begin{cases}
             \max\big\{ 0.6(2-x_2), 0\big\} &\text{ if }x_2\geq 1.5\,,\\
             0.3 &\text{ if }  x_2 < 1.5\,.
            \end{cases}
\end{align}
The length of the inflow and outflow boundaries are respectively $L_{\rm in} = 1.5\,\rm m$ and $L_{\rm out} = 1\,\rm m$, and given $u_{\rm max} = 0.5\,\rm m/s$ a maximum bulk velocity, with $u_{\rm in} = 4u_{\rm max}/L_{\rm in}^2$ and $u_{\rm out} = 4u_{\rm max}L_{\rm in}/L_{\rm out}^3$, the Dirichlet velocity is given by
\begin{align*}
 \bB{u}_{\rm D}(x_1,x_2) =
 \begin{cases}
 \big(0, -u_{\rm in}x_1(L_{\rm in}-x_1)\big) & \text{ if } x_2 = 3,\quad 0\leq x_1\leq L_{\rm in}\,,\\
 \big(u_{\rm out}x_2(L_{\rm out}-x_2),0\big) & \text{ if } x_1 = 5,\quad 0\leq x_2\leq L_{\rm out}\,,\\
 \bB{0} & \text{ otherwise},
 \end{cases}
\end{align*}
where $\bB{u}_{\rm D} = \big(u_{{\rm D},1},u_{{\rm D},2}\big)$. Note that in both inlet and outlet boundaries the velocity $\bB{u}_{\rm D}$ is described by a parabolic profile. The volumetric flow is then defined as
\begin{align*}
 \Phi(x_1,x_2) =
 \begin{cases}
 -0.6 u_{{\rm D},2}(x_1,x_2)& \text{ if } x_2 = 3,\quad 0\leq x_1\leq L_{\rm in}\,,\\
 -\phi_h^n u_{{\rm D},1}(x_1,x_2) & \text{ if } x_1 = 5,\quad 0\leq x_2\leq L_{\rm out}\,.
 \end{cases}
\end{align*}
Note that the outflow depends on the concentration of solid particles at the previous time while the inflow has a fixed volume fraction of $0.6$. The rest of the functions and parameters are taken as in Example~\ref{sec:example:2}.

In Figure~\ref{fig:cont1} we show the numerical solution at three times $t = 0.5, 1.5, 3\,\rm s$, with $\Delta t = 0.001\,\rm s$, computed in a triangular mesh with $h= 0.0826$. The average number of Picard iterations was 5. We observe that the concentration remains positive and bounded, and that sedimentation take place while solid particles are entering from the top and leaving the tank from the rightmost boundary. The temperature variations in this example show a more pronounced effect due to the velocity $\bB{u}$, where the maximum temperature is assumed at the outlet boundary. For the velocity,
we also visualize the velocity magnitude and direction using arrows. Although the flow is primarily determined by the Dirichlet boundary conditions at the inlet and outlet, the influence of temperature can also be observed, most notably on the inclined wall and near the outlet.

\section{Conclusions} \label{sec:conclusions}

A pseudo-steady-state model of heat-assisted multidimensional sedimentation was introduced. The model consisting of two convection diffusion equations, and flow equations, including several coupling terms, was studied combining three types of boundary conditions; Neumann, Dirichlet and mixed. The strategy followed to analyse the model equations was to formulate three decoupled and linearised saddle-point problems, for which the Babu$\rm\check{s}$ka-Brezzi theory in Banach spaces was applied. After constructing an operator connecting the solutions of the three saddle-point problems, the existence and uniqueness of the fully nonlinearly coupled system of equations is shown using Banach's fixed-point theorem on the coupling operator. A drawback is that due to the specific couplings through coefficient functions, and because of the technique employed to show that the coupling operator is a contraction, an additional regularity assumption is required. This type of regularity conditions, which states that some components of the solution should belong to an additional slightly smoother Sobolev space, have already been employed in the literature \cite{Alvarez2016b,Alvarez2026,Careaga2025a}.

The discrete formulation of the three saddle-point problems and fully coupled system was based on the continuous formulation. A combination of piecewise polynomial and Raviart-Thomas spaces, together with the Arnold--Falk--Winther elements for the flow equations, was used to ensure inf-sup stability for the numerical approximations.
Alternatively, PEERS elements can also be employed for the flow equations. For the functional space over the boundary, the approximation is carried out over a coarser boundary mesh, following \cite{Dominguez2013}. The well-posedness of the discrete formulation follows a similar approach as for the continuous case.
A priori estimates are obtained by combining the global inf-sup conditions for the continuous and discrete formulations of the three saddle-point problems. Rates of convergence for sufficiently smooth solutions were shown using approximation properties of the chosen discrete spaces as in \cite{Careaga2023,Careaga2025a,Gatica2025}.

The current work can be further extended in different lines. In terms of the governing equations, further studies can be conducted to model the case in which the flow equations involve an additional nonlinear term in $\bB{u}$ typically arising in the material derivative in the Navier-Stokes equations.  Furthermore, investigating the limiting case in which $\mathcal{D}_1$ vanishes in an interval of $\phi$, the so-called strongly-degenerate parabolic case, is also of research interest. The analysis of the fully time dependent case is left for the future.
Regarding the treatment of the boundary terms, the approach followed to analyse the boundary terms by using a coarser boundary mesh is still pending to be extended to the three-dimensional case. Finally, extensions of heat-assisted sedimentation to the modelling of reactive settling in activated sludge processes \cite{Careaga2021,SDAMM_SBR1,SDAMM_SBR2} is of particular interest as chemical reactions between bacteria and substrates have an impact on temperature variations.

\section*{Acknowledgements}
This work has been partially supported by ANID Chile through the Fondecyt projects no.~3230553 and no.~3260398.

\appendix

\section{Appendix: Postponed proofs} \label{appendix:postponed:proofs}

\subsection{Proof of Lemma \ref{lemma:infsup:b:mass}}\label{appendix:proof:infsup:b:mass}
\begin{proof}
Let $\vec{\bB{\psi}}=(\psi,\chi)\in\bQ_{1,0}$, where $\psi\in \L_0^{\rho}(\Omega)$, and define function $\psi_0^\varrho\in \L_0^\varrho(\O)$ as follows:
\begin{align*}
\psi_0^{\varrho} := |\psi|^{\rho-2}\psi - \frac{1}{|\O|} \int_\O |\psi|^{\rho-2}\psi
\quad \text{where}\quad \int_\O
\psi\,\psi_0^{\varrho}
=\|\psi\|_{0,\rho;\O}^\rho\quad\text{and}\quad
\|\psi_0^{\varrho}\|_{0,\varrho;\O} \leq C_0 \|\psi\|_{0,\rho;\O}^{\rho-1}\,,
\end{align*}
where $C_0>0$ depends only on $\O$ and $\rho$. Then, we let $z_\psi\in\mathrm H^1(\O)$ be the unique solution to the auxiliary problem
\begin{align*}
-\Delta z_\psi = \psi_0^{\varrho} \qin\O\,,\qquad
\nabla z_\psi\cdot\bB{\nu} = 0 \qon\Gamma, \quad\qan\quad
\int_\O z_\psi = 0\,.
\end{align*}
The zero-mean condition on $\psi_0^{\varrho}$ guarantees existence, whereas the normalization of $z_\psi$ guarantees uniqueness. Moreover, $z_{\psi}$ satisfies the a priori bound $\|z_\psi\|_{1;\O} \leq \widetilde{C}_1 \|\psi_0^{\varrho}\|_{0,\varrho;\O}$, with $\widetilde{C}_{1}>0$ a constant independent of $\psi$. Now, defining
$\bB{\varsigma}_\psi:=-\nabla z_\psi$, we obtain $\div(\bB{\varsigma}_\psi) = \psi_0^{\varrho}$ and $\bB{\varsigma}_\psi\cdot\bB{\nu}=0$, and
\begin{align*}
\|\bB{\varsigma}_\psi\|_{\div_\varrho;\O}
\leq
C_1
\|\psi\|_{0,\rho;\O}^{\rho-1}\,,
\end{align*}
with $C_1$ a positive constant also independent of $\psi$. Hence, we have
\begin{align}
b\big( (\bB{0},\bB{\varsigma}_\psi), (\psi,\chi) \big) &
= \int_\O \psi\,\psi_0^{\varrho}
= \|\psi\|_{0,\rho;\O}^\rho
\qquad\text{and}\qquad
\sup_{\substack{ \vsmass\in\bH_1\setminus\{\vec{\bB{0}}\} }}
\frac{ b(\vsmass,(\psi,\chi)) }{ \|\vsmass\|_{\bH_1} }
\geq
C_1^{-1} \|\psi\|_{0,\rho;\O}\,. \label{eq:cota:aux1}
\end{align}
To control the boundary component $\chi$, we first make use of the dual characterization of the norm in $\mathrm{H}^{1/2}(\Omega)$, which implies the existence of $\eta\in\mathrm H^{-1/2}(\Gamma)$ such that $ \|\eta\|_{-1/2,\Gamma} = 1$ and $\langle\eta,\chi\rangle_\Gamma \geq \tfrac12 \|\chi\|_{1/2,\Gamma}$. Then, we let $z_\chi\in\mathrm H^1(\O)$ be the unique solution to the auxiliary problem
\begin{align*}
\Delta z_\chi = \frac{1}{|\O|} \langle\eta,1\rangle_\Gamma \qin\O, \qquad
\nabla z_\chi\cdot\bB{\nu} = \eta \qon\Gamma, \qan
\int_\O z_\chi = 0\,,
\end{align*}
which satisfies an a priori bound $\|z_\chi\|_{1;\O} \leq \widetilde{C}_2 \|\eta\|_{-1/2,\Gamma} = \widetilde{C}_{2}$, with $\widetilde{C}_{2}>0$ independent of $\chi$.
Moreover, defining $\bB{\varsigma}_\chi:=\nabla z_\chi$, we have that $\div(\bB{\varsigma}_\chi) = |\O|^{-1}\langle\eta,1\rangle_\Gamma$,  $ \bB{\varsigma}_\chi\cdot\bB{\nu} = \eta$, and $ \|\bB{\varsigma}_\chi\|_{\div_\varrho;\O} \leq C_2$, with $C_2>0$ also independent of $\chi$. Thus, since $\psi$ has zero mean, we conclude that
\begin{align}
b\big( (\bB{0},\bB{\varsigma}_\chi), (\psi,\chi) \big)
= \langle\eta,\chi\rangle_\Gamma\qquad \text{and}\qquad
\sup_{\substack{ \vsmass\in\bH_1\setminus\{\vec{\bB{0}}\} }}
\frac{ b(\vsmass,(\psi,\chi)) }{ \|\vsmass\|_{\bH_1} }
\geq
\frac{1}{2C_2} \|\chi\|_{1/2,\Gamma}\,. \label{eq:cota:aux2}
\end{align}
Denoting by $M$ the supremum appearing in \eqref{eq:cota:aux1} and \eqref{eq:cota:aux2}, and combining both inequalities we have
\begin{align*}
\begin{aligned}
M \geq \dfrac{1}{2} \Big(C_1^{-1} \|\psi\|_{0,\rho;\O} + \frac{1}{2C_2} \|\chi\|_{1/2,\Gamma}\Big)
\geq \dfrac{1}{4} \min \{ 2C_1^{-1}, C_2^{-1}\} \Big(\|\psi\|_{0,\rho;\O} + \|\chi\|_{1/2,\Gamma} \Big)\,.
\end{aligned}
\end{align*}
This proves \eqref{eq:IS:b:mass} with $\beta_1
:= \min \{ 2C_1^{-1}, C_2^{-1}\}/4$.
\end{proof}

\subsection{Proof of Lemma~\ref{lemma:infsup:b:heat}}\label{appendix:proof:infsup:b:heat}
\begin{proof}
The result is immediate for $\varkappa=0$. Hence, let $\varkappa\in\L^\rho(\O)\setminus\{0\}$ and define $\varkappa_\varrho := |\varkappa|^{\rho-2}\varkappa \in\L^\varrho(\O)$, which satisfies
\begin{align*}
\|\varkappa_\varrho\|_{0,\varrho;\O} = \|\varkappa\|_{0,\rho;\O}^{\rho-1}
\qan
\int_\O\varkappa\,\varkappa_\varrho = \|\varkappa\|_{0,\rho;\O}^{\rho}\,.
\end{align*}
Since $\Gamma_{\rm D}$ has positive surface measure, the Friedrichs--Poincaré inequality holds in $\mathrm H^1_{\Gamma_{\rm D}}(\O)$, therefore there exists $c_{\rm p}>0$ such that
\begin{align*}
c_{\rm p}\|v\|_{1;\O}^2 \leq \|\nabla v\|_{0;\O}^2
\qquad
\forall\,v\in\mathrm H^1_{\Gamma_{\rm D}}(\O)\,.
\end{align*}
Equivalently, $z$ satisfies
\begin{align*}
-\Delta z = \varkappa_\varrho \qin\O, \qquad z=0 \qon\Gamma_{\rm D},
\qquad
\nabla z\cdot\bB{\nu}=0 \qon\Gamma_{\rm N}\,.
\end{align*}
We let $z\in\mathrm H^1_{\Gamma_{\rm D}}(\O)$ be the unique weak solution to
\begin{equation*}
\int_\O\nabla z\cdot\nabla v = \int_\O\varkappa_\varrho\,v
\qquad
\forall\,v\in\mathrm H^1_{\Gamma_{\rm D}}(\O)\,.
\end{equation*}
The, replacing $v=z$ above and using the continuous injection $i_\rho:\mathrm H^1(\O)\to\L^\rho(\O)$, we obtain the a priori bound $\|z\|_{1;\O} \leq \|i_\rho\|c_{\rm p}^{-1} \|\varkappa_\varrho\|_{0,\varrho;\O}$. Now, defining $\widetilde{\bB{\varsigma}}:=-\nabla z$, it follows that $\div(\widetilde{\bB{\varsigma}}) = \varkappa_\varrho$ in $\O $ and $ \widetilde{\bB{\varsigma}}\cdot\bB{\nu} = 0$ on $\Gamma_{\rm N}$ thus $\widetilde{\bB{\varsigma}}\in\bH_{\Gamma_{\rm N}}(\div_\varrho;\O)$. Using the a priori estimate above, we have
\begin{align*}
\|\widetilde{\bB{\varsigma}}\|_{\div_\varrho;\O}
\leq (1 + \|i_\rho\|/c_{\rm p}) \|\varkappa_\varrho\|_{0,\varrho;\O}\,.
\end{align*}
Therefore, choosing $(\bB{0},\widetilde{\bB{\varsigma}})\in\bH_2$, we find that
\begin{align*}
\sup_{\substack{\vsheat\in\bH_2\setminus\{\vec{\bB{0}}\}}}
\frac{ \mfk{b}(\vsheat,\varkappa) }{ \|\vsheat\|_{\bH_2} }
&\,\geq\,
\frac{ \mfk{b} \big( (\bB{0},\widetilde{\bB{\varsigma}}), \varkappa \big) }{ \|(\bB{0},\widetilde{\bB{\varsigma}})\|_{\bH_2} }
\,=\,
\frac{\|\varkappa\|_{0,\rho;\O}^{\rho}}{ \|\widetilde{\bB{\varsigma}}\|_{\div_\varrho;\O} }
\,\geq\,
\left(1 + \frac{\|i_\rho\|}{c_{\rm p}}\right)^{-1} \|\varkappa\|_{0,\rho;\O}\,.
\end{align*}
The result follows with $\beta_2 := ( 1+ \|i_\rho\|/c_{\rm p})^{-1}$.
\end{proof}

\subsection{Proof of Lemma~\ref{lemma:infsup:b:flow}} \label{appendix:proof:infsup:b:flow}
\begin{proof}
The particular case $r=4$ and $s=4/3$ is established in \cite[Lemma 3.5]{Gatica2021}. We adapt its proof to the conjugate exponents $(r,s)$ satisfying \eqref{def:ranges}. In this way, we let $\vec{\bB{w}}=(\bB{w},\widetilde{\bB{\gamma}}) \in\mbb{M}$ and define $\bB{w}_s\in\bL^s(\O)$ whose components are given by $w_{s,j}:=|w_j|^{r-2}w_j$,  $j=1,\ldots,d$. Then, there holds
\begin{align*}
\|\bB{w}_s\|_{0,s;\O}^s    = \|\bB{w}\|_{0,r;\O}^r \qan
\int_\O\bB{w}\cdot\bB{w}_s = \|\bB{w}\|_{0,r;\O}^r\,.
\end{align*}
Let $\bB{z}_{w}\in\bH_0^1(\O)$ be the unique solution to the auxiliary problem
\begin{align*}
\int_\O \bB{e}(\bB{z}_{w}):\bB{e}(\bB{v}) = -\int_\O\bB{w}_s\cdot\bB{v}
\qquad \forall\,\bB{v}\in\bH_0^1(\O)\,.
\end{align*}
The continuous injection $\bB{i}_r:\bH^1(\O)\to\bL^r(\O)$, together with the Poincaré and Korn inequalities, gives the a priori estimate
$ \|\bB{e}(\bB{z}_{w})\|_{0;\O} \leq C_w\|\bB{w}_s\|_{0,s;\O}$, where $C_w>0$ is independent of $\bB{w}_s$. Next, let $\bB{\tau}_{w}\in\Hdivtr$ be the zero-mean-trace component of
$\bB{e}(\bB{z}_{w})$. Then, it follows that $\bdiv(\bB{\tau}_{w})=\bB{w}_s$ and $\bB{\tau}_{w}=\bB{\tau}_{w}^{\tt t}$, and there exists $C_1>0$ such that
\begin{align*}
\|\bB{\tau}_{w}\|_{\bdiv_s;\O} \leq C_1\|\bB{w}_s\|_{0,s;\O}\,.
\end{align*}
Therefore, we obtain the following bound
\begin{align}\label{eq:IS:b:flow:velocity}
\sup_{\substack{\vec{\bB{\eta}}\in\mX\setminus\{\vec{\bB{0}}\} }}
\frac{\mbf{b}(\vec{\bB{\eta}},\vec{\bB{w}})}{\|\vec{\bB{\eta}}\|_{\mX}}
\geq \frac{\mbf{b}\big((\bB{0},\bB{\tau}_{w}),\vec{\bB{w}}\big)}{\|\bB{\tau}_{w}\|_{\bdiv_s;\O}}
= \frac{ \|\bB{w}\|_{0,r;\O}^r}{\|\bB{\tau}_{w}\|_{\bdiv_s;\O}}
\geq C_1^{-1}\|\bB{w}\|_{0,r;\O}\,.
\end{align}
We next control the vorticity component. Let $\bB{z}_{\gamma}\in\bH_0^1(\O)$ be the unique solution to
\begin{align*}
\int_\O \bB{e}(\bB{z}_{\gamma}):\bB{e}(\bB{v}) = -\int_\O \widetilde{\bB{\gamma}}:\nabla\bB{v}
\qquad \forall\,\bB{v}\in\bH_0^1(\O)\,.
\end{align*}
The Korn inequality implies $\|\bB{e}(\bB{z}_{\gamma})\|_{0;\O} \leq C_\gamma \|\widetilde{\bB{\gamma}}\|_{0;\O}$ with $C_{\gamma}>0$ independent of $\widetilde{\bB{\gamma}}$. Now, let $\bB{\tau}_{\gamma}\in\Hdivtr$ be the
zero-mean-trace component of $\bB{e}(\bB{z}_{\gamma})+\,\widetilde{\bB{\gamma}}$. The preceding variational problem implies that $\bdiv(\bB{\tau}_{\gamma})=\bB{0}$.
Moreover, since
$\bB{e}(\bB{z}_{\gamma})$ and $\mbb{I}$ are symmetric, whereas $\widetilde{\bB{\gamma}}$ is skew-symmetric, there hold
\begin{align*}
\int_\O \bB{\tau}_{\gamma}: \widetilde{\bB{\gamma}}
= \|\widetilde{\bB{\gamma}}\|_{0;\O}^2 \qan \|\bB{\tau}_{\gamma}\|_{\bdiv_s;\O} \leq C_2\|\widetilde{\bB{\gamma}}\|_{0;\O}\,,
\end{align*}
with $C_2>0$ independent of $\widetilde{\bB{\gamma}}$. Consequently, we get the bound
\begin{align}\label{eq:IS:b:flow:vorticity}
\sup_{\substack{ \vec{\bB{\eta}}\in\mX\setminus\{\vec{\bB{0}}\} }}
\frac{\mbf{b}(\vec{\bB{\eta}},\vec{\bB{w}})}{\|\vec{\bB{\eta}}\|_{\mX}}
& \geq
\frac{\mbf{b}\big((\bB{0},\bB{\tau}_{\gamma}),\vec{\bB{w}}\big)}{\|\bB{\tau}_{\gamma}\|_{\bdiv_s;\O}}
\geq C_2^{-1}\|\widetilde{\bB{\gamma}}\|_{0;\O}\,.
\end{align}
Combining \eqref{eq:IS:b:flow:velocity} and \eqref{eq:IS:b:flow:vorticity}, we obtain \eqref{eq:IS:b:flow} with $\beta_3 := \frac{1}{2} \min\{C_1^{-1},C_2^{-1}\}$.
\end{proof}

\subsection{Proof of Lemma~\ref{lemma:LC:mass}} \label{appendix:proof:LC:mass}

\begin{proof}
Given $\varphi_1,\varphi_2\in\L^\rho(\O)$ and $\bB{w}_1,\bB{w}_2\in\bL^r(\O)$ as stated, let $\phi_i=\mathrm{S}_1(\varphi_i,\bB{w}_i)\in\L^\rho(\O)$, $i\in\{1,2\}$, be such that $(\vtmass_i,\vec{\bB{\phi}}_i)\in\bH_1\times\bQ_1$ is the solution to \eqref{syst:FP:weak:mass} corresponding to $(\varphi_i,\bB{w}_i)$. Using the global bilinear form associated with \eqref{syst:FP:weak:mass}, defined in \eqref{def:A:mass}, we have
\begin{equation}\label{eq:aux:mass:1}
\begin{aligned}
& \bB{A}_{\varphi_1,\bB{w}_1} \Big((\vtmass_1,\vec{\bB{\phi}}_1)-(\vtmass_2,\vec{\bB{\phi}}_2),(\vsmass,\vec{\bB{\psi}})\Big)\\
&=
\bB{A}_{\varphi_2,\bB{w}_2} \Big((\vtmass_2,\vec{\bB{\phi}}_2),(\vsmass,\vec{\bB{\psi}})\Big) -
\bB{A}_{\varphi_1,\bB{w}_1} \Big((\vtmass_2,\vec{\bB{\phi}}_2),(\vsmass,\vec{\bB{\psi}})\Big) +
\mathcal{F}_{\varphi_1}(\vsmass) - \mathcal{F}_{\varphi_2}(\vsmass) \\
&=
\int_{\O} \big(\mathcal{D}_1(\varphi_2)-\mathcal{D}_1(\varphi_1)\big)\bB{r}_2\cdot\bB{s} -
\int_{\O} \phi_2(\bB{w}_2-\bB{w}_1)\cdot\bB{s}
+ \int_{\O} \big(\fbk(\varphi_1)-\fbk(\varphi_2)\big)\bB{k}\cdot\bB{s}\,,
\end{aligned}
\end{equation}
for all $(\vsmass,\vec{\bB{\psi}}) =((\bB{s},\bB{\varsigma}),(\psi,\chi))\in\bH_1\times\bQ_1$. Then, using the Lipschitz continuity of $\mathcal{D}_1$ and $f_{\rm bk}$ in \eqref{eq:LC:data}, and applying H\"older's inequality, we obtain
\begin{align}\label{eq:aux:mass:2}
\begin{aligned}
\Big|\int_{\O} \big(\mathcal{D}_1(\varphi_2) - \mathcal{D}_1(\varphi_1) \big) \bB{r}_2\cdot\bB{s}\,\Big|
&\leq
L_{\mathcal{D}_1}\|\varphi_1-\varphi_2\|_{0,\rho;\O} \|\bB{r}_2\|_{0,r;\O} \|\bB{s}\|_{0;\O}\,, \\[1ex]
\Big|\int_{\O} \phi_2(\bB{w}_2-\bB{w}_1)\cdot\bB{s}\, \Big|
&\leq
\|\phi_2\|_{0,\rho;\O} \|\bB{w}_1-\bB{w}_2\|_{0,r;\O} \|\bB{s}\|_{0;\O}\,,\\[1ex]
\Big|\int_{\O}\big(\fbk(\varphi_1) - \fbk(\varphi_2)\big)\bB{k}\cdot\bB{s}\,\Big|
&\leq
L_{\rm fbk}|\O|^{1/r}\|\varphi_1-\varphi_2\|_{0,\rho;\O}\|\bB{s}\|_{0;\O}\,.
\end{aligned}
\end{align}
The regularity hypothesis~\textbf{(H)}, the continuous injection $\bB{i}_{\varepsilon,r}:\bH^\varepsilon(\O)\to\bL^r(\O)$, and \eqref{eq:bound:Cs} yield
\begin{align*}
\|\bB{r}_2\|_{0,r;\O}
\leq
\|\bB{i}_{\varepsilon,r}\|
\|\bB{r}_2\|_{\varepsilon;\O}
\leq
\|\bB{i}_{\varepsilon,r}\|
\widehat{C}_1\,.
\end{align*}
Furthermore, the a priori estimate \eqref{eq:apriori:mass} and the continuous inclusion ${\rm H}^{-1/2+\varepsilon}(\Gamma) \hookrightarrow{\rm H}^{-1/2}(\Gamma)$ imply:
\begin{align}
\|\phi_2\|_{0,\rho;\O}
&\leq C_{\mathrm{S}_1}\datacons
\leq C_1\widehat{C}_1\,, \label{eq:bound:phi2:mass}
\end{align}
where $C_1>0$ is independent of the data and the input functions. Hence, applying the global inf-sup condition \eqref{eq:GIS:mass:1} to \eqref{eq:aux:mass:1}, and using \eqref{eq:aux:mass:2}--\eqref{eq:bound:phi2:mass}, we obtain
\begin{align*}
\frac{\alpha_{\bB{A}}}{2} \|(\vtmass_1,\vec{\bB{\phi}}_1) - (\vtmass_2,\vec{\bB{\phi}}_2)\|_{\bH_1\times\bQ_1}
&\leq
\Big\{L_{\mathcal{D}_1}\|\bB{i}_{\varepsilon,r}\|\widehat{C}_1 + L_{\rm fbk}|\O|^{1/r}\Big\}
\|\varphi_1-\varphi_2\|_{0,\rho;\O} + C_1\widehat{C}_1 \|\bB{w}_1-\bB{w}_2\|_{0,r;\O}\,.
\end{align*}
Taking the maximum over the coefficients multiplying the norms on the right-hand side and using
\begin{align*}
\|\phi_1-\phi_2\|_{0,\rho;\O} \leq \| (\vtmass_1,\vec{\bB{\phi}}_1) - (\vtmass_2,\vec{\bB{\phi}}_2)\|_{\bH_1\times\bQ_1}\,,
\end{align*}
we obtain \eqref{eq:LC:mass} with $L_{\mathrm{S}_1}:=\frac{2}{\alpha_{\bB{A}}} \max\left\{\|\bB{i}_{\varepsilon,r}\|,|\O|^{1/r},C_1\right\}$.
\end{proof}

\subsection{Proof of Lemma~\ref{lemma:LC:flow}}\label{appendix:proof:LC:flow}
\begin{proof}
Given $(\varphi_1,\vartheta_1),(\varphi_2,\vartheta_2)\in\L^\rho(\O)\times\L^\rho(\O)$, for each $i\in\{1,2\}$, we set $\bB u_i := \bS(\varphi_i,\vartheta_i) \in\bL^r(\O)$, and let $(\vec{\bB{\zeta}}_i,\vec{\bB{u}}_i)\in\mX\times\mbb M$, with $\vec{\bB{u}}_i = (\bB u_i,\bB\gamma_i)$, be the corresponding solution to \eqref{syst:FP:weak:flow}. Using the global bilinear form \eqref{def:A:flow}, we obtain
\begin{equation}\label{eq:aux:flow:1}
\begin{aligned}
& \mbb A_{\varphi_1,\vartheta_1}\big((\vec{\bB{\zeta}}_1,\vec{\bB{u}}_1) - (\vec{\bB{\zeta}}_2,\vec{\bB{u}}_2),(\vec{\bB{\eta}},\vec{\bB{w}})\big)\\
& \quad =
\mbb A_{\varphi_2,\vartheta_2}\big((\vec{\bB{\zeta}}_2,\vec{\bB{u}}_2),(\vec{\bB{\eta}},\vec{\bB{w}})\big)
- \mbb A_{\varphi_1,\vartheta_1}\big((\vec{\bB{\zeta}}_2,\vec{\bB{u}}_2),(\vec{\bB{\eta}},\vec{\bB{w}})\big)
+ \mbf F_{\varphi_1,\vartheta_1}(\vec{\bB{w}}) - \mbf F_{\varphi_2,\vartheta_2}(\vec{\bB{w}})\\
& \quad = \int_{\O}\big(\mu(\varphi_2,\vartheta_2) - \mu(\varphi_1,\vartheta_1)\big)\bB{\zeta}_2:\bB{\eta}
- \delta_4 \int_{\O}\big(\mbf K^{-1}(\varphi_2) - \mbf K^{-1}(\varphi_1)\big)\bB u_2\cdot\bB{w}\\
&\qquad
- \int_{\O} \big(\mbf f(\varphi_1,\vartheta_1) - \mbf f(\varphi_2,\vartheta_2)\big)\cdot\bB{w}\,.
\end{aligned}
\end{equation}
Using the Lipschitz continuity assumptions in \eqref{eq:LC:data} and H\"older's inequality with the exponents
$(\rho,r,2)$, we obtain
\begin{align}
\left| \int_{\O}\big(\mu(\varphi_2,\vartheta_2) - \mu(\varphi_1,\vartheta_1)\big)\bB{\zeta}_2:\bB{\eta}\,\right|
& \leq
L_\mu \Big(\|\varphi_1-\varphi_2\|_{0,\rho;\O} + \|\vartheta_1-\vartheta_2\|_{0,\rho;\O}\Big)
\|\bB{\zeta}_2\|_{0,r;\O}\|\bB{\eta}\|_{0;\O}\,,\label{eq:bound:mu:flow}\\[1ex]
\delta_4 \left|\int_{\O}\big(\mbf K^{-1}(\varphi_2) - \mbf K^{-1}(\varphi_1)\big)\bB u_2\cdot\bB{w}\,\right|
& \leq
\delta_4L_{\mbf K}\|\varphi_1-\varphi_2\|_{0,\rho;\O}\|\bB u_2\|_{0;\O}\|\bB{w}\|_{0,r;\O}\,, \label{eq:bound:K:flow}\\[1ex]
\left|\int_{\O} \big(\mbf f(\varphi_1,\vartheta_1) - \mbf f(\varphi_2,\vartheta_2)\big)\cdot\bB{w}\,\right|
& \leq
L_{\mbf f}|\O|^{1/2} \Big(\|\varphi_1-\varphi_2\|_{0,\rho;\O} + \|\vartheta_1-\vartheta_2\|_{0,\rho;\O}\Big)\|\bB{w}\|_{0,r;\O}\,.\label{eq:bound:f:flow}
\end{align}
The regularity hypothesis~\textbf{(H)} and the continuous injection $\bB j_{\varepsilon,r}: \mbb H^\varepsilon(\O)\to\mbb L^r(\O)$ imply
\begin{align}
\|\bB{\zeta}_2\|_{0,r;\O} & \leq \|\bB j_{\varepsilon,r}\| \|\bB{\zeta}_2\|_{\varepsilon;\O} \nonumber
\leq C_{\varepsilon,3} \|\bB j_{\varepsilon,r}\|
 \dataflow\big(\|(\varphi_2,\vartheta_2)\|\big)\,.\label{eq:bound:zeta2:flow}
\end{align}
Furthermore, the a priori estimate \eqref{eq:apriori:flow}, the inclusion $\bL^r(\O)\hookrightarrow\bL^2(\O)$, and $\bH^{1/2+\varepsilon}(\Gamma)\hookrightarrow\bH^{1/2}(\Gamma)$ give
\begin{equation}\label{eq:bound:u2:flow}
\|\bB u_2\|_{0;\O}
\leq C_3 \dataflow\big(\|(\varphi_2,\vartheta_2)\|\big)\,,
\end{equation}
where $C_3>0$ is independent of the data and the input functions. Finally, applying the global inf-sup condition
\eqref{eq:GIS:flow:1} to \eqref{eq:aux:flow:1}, and using \eqref{eq:bound:mu:flow}--\eqref{eq:bound:u2:flow}, we obtain
\eqref{eq:LC:S}, after absorbing the resulting structural constants into $L_{\bS}$.
\end{proof}

\bibliographystyle{plain}

\end{document}